\newtheorem{lemma}{Lemma}[section]
\theoremstyle{definition}
\newtheorem{remark}{Remark}[section]
\numberwithin{equation}{section}
\begin{document}

\begin{frontmatter}

\title
{Folding Domain Functions (FDF): a Random Variable Transformation
technique for the non-invertible case, with applications to RDEs}

\author[A1]{Fabrizio Masullo}
\ead{fabrizio.masullo89@gmail.com}
\tnotetext[note]
{F.M. acknowledges the Universitat Rovira i Virgili
for the Mart\'{\i} i Franqu\`{e}s scholarship 2016 MF-PIPF-27. J.E.}

\author[A2]{Fabio Zanolin}
\ead{fabio.zanolin@uniud.it}

\author[A1]{Josep Bonet Avalos}
\ead{josep.bonet@urv.cat}

\address[A1]{Departament d'Enginyeria Qu\'{\i}mica,
Escola Tecnica Superior d'Enginyeria Qu\'{\i}mica (ETSEQ),
Universitat Rovira i Virgili, Avda. dels Pa\"{\i}sos Catalans 26, 43007,
Tarragona, Spain}

\address[A2]{Department of Mathematics, Computer Science and Physics (DMIF),
University of Udine,
via delle Scienze 206, 33100 Udine, Italy}

\begin{abstract}{\small{
The Random Variable Transformation (RVT) method is a fundamental
tool for determining the probability distribution function
associated with a Random Variable (RV) $Y=g(X),$ where $X$ is a RV
and $g$ is a suitable transformation. In the usual applications of
this method, one has to evaluate the derivative of $h \equiv
g^{-1}.$ This can be a straightforward procedure when $g$ is
invertible, while difficulties may arise when $g$ is
non-invertible. The RVT method has received a great deal of
attention in the recent years, because of its crucial relevance in
many applications. In the present work we introduce a new approach
which allows to determine the probability density function $\mu_Y$
of the RV $Y=g(X)$, when $g$ is non-invertible due to its
non-bijective nature. The main interest of our approach is that it
can be easily implemented, from the numerical point of view, but
mostly because of its low computational cost, which makes it very
competitive. As a proof of concept, we apply our method to some
numerical examples related to random differential equations, as
well as discrete mappings, all of them of interest in the domain
of applied Physics. }}
\end{abstract}

\begin{keyword}{\small{
Random variable transformation \sep Distribution function \sep
Density function \sep Ordinary differential equations \sep Random
differential equations \sep Non-invertible random transformation \sep
Applications \sep
Numerical simulations\\
{\emph{MSC classification:}} {34F05} \sep {60E05} \sep {65C30}.}}
\end{keyword}

\date{}

\end{frontmatter}

\begin{center}
{To Roberto Blanchini, in memoriam}
\end{center}

\section{Introduction}\label{section-1}
The theory of random dynamical systems has been greatly developed
in the past fifty years, both from the theoretical and applied
point of view \cite{Ar-1998,So-1973}. Particularly relevant are the applications to
biology, economics and social sciences. More recent
applications involve models related to problems in climatology,
combustion theory and conduction of electrical impulses in nerve axons
\cite{BCKV-2010},
just to mention a few examples and to show the great deal of interest in this area
of research.

The more general models involve the study of a random differential
equations with random initial data, along with the effect of
time-dependent noise sources. However, there are many significant
examples where such time-dependent noise sources are not present.
In these cases, one deals with a deterministic (non-random) law
describing the dynamic evolution of the model, where the
probabilistic effects only concern the randomness of the initial
conditions. In this situation, we can model the dynamic phenomenon
as a map which transforms random initial conditions, expressed by
a random variable (RV), to an output, whose probability
distribution is to be determined \cite[Ch.5]{Pa-1984}. Problems of
this kind can be theoretically settled within the framework of the
theory of functions of a random variable. More in detail, let $X$
be a random variable, with density function $\mu_X$, defined on a
probability space $\Omega=(\Omega,{\mathcal A},p)$, \footnote{As
usual, ${\mathcal A}$ denotes a $\sigma$-algebra of measurable
sets (in the case of our applications ${\mathcal A}$ will denote
the collection of Lebesgue measurable sets in ${\mathbb R}$) and
$p$ is a probability defined on ${\mathcal A}.$} with values into
${\mathbb R}$, and let $g$ be a real function defined on ${\mathbb
R}$, which may require some properties that we discuss later on. A
classical problem considered in the literature consists in
determining the probability distribution of $Y=g(X)$ from the
information about $X$ and $g$. This problem is solved by a
standard procedure, by assuming $g$ to be invertible (see
\cite[p.81]{JP-2000}). In particular, in the one-dimensional case,
$g:{\mathbb R}\to {\mathbb R},$ we have a precise formula for the
density function $\mu_Y$ of $Y$ provided that $g$ is continuously
differentiable with a non-vanishing first derivative
\cite[Corollary 11.2]{JP-2000}. A more complex situation arises if
$g$ is only piecewise strictly monotone, but continuously
differentiable, with a finite set of isolated points where the
derivative vanishes, which is the problem that we address in the
present article. In this case, if $I_1,\dots,I_n$ is a finite
sequence of intervals decomposing the domain of $g$ and such that
$g$ is strictly monotone and continuously differentiable in the
interior of each interval, we have that
\begin{equation}\label{eq-1.1}
\mu_Y(y)= \sum_{i=1}^{n} \mu_X(h_i(y))|h'_i(y)| \mathbbm{1}_{\land}(y),
\end{equation}
where $h_i$ is the inverse function of $g$ but restricted to the interval $I_i,$
$\land=\{y: y=g(x): x\in {\mathbb R}\}$ and
$\mathbbm{1}_{\land}(y)$ is the characteristic function which takes
a value $1$ on $\land$ and $0$ otherwise
(see \cite[Corollary 11.3]{JP-2000}).
Although the formula \eqref{eq-1.1} provides a precise definition of the
density function associated with the random variable $Y,$ its implementation in concrete examples can be very difficult. Recent articles
applying this method, named \textit{Random Variable Transformation technique},
to different models of present interest in mathematical
biology and population dynamics, for instance,
deals with more specific situations when $g$ is monotone on its whole domain
or has a quadratic shape, thus reducing the complexity only to two symmetric
intervals of invertibility (see \cite{CC-2016,CC-2016b,CC-2016c,BCCN-2018,CCDJ-2022}
and the references therein). The aim of this article is precisely to provide the framework to build an efficient algorithm for these cases of interest that cannot be straightforwardly addressed within the standard approach.

A few typical cases, in which a function of a RV may occur in dynamical systems, are the following.

{\bf a)\;\;} As a first example, let us consider the case of a
discrete Dynamical System defined by a difference equation of the
form
\begin{equation}\label{eq-1.2}
x_{n+1}= L(x_{n}),
\end{equation}
with $L:A\to A,$ where $A$ is a suitable domain in ${\mathbb R}^d$ and
$L$ is a sufficiently smooth
function. Typical models in population dynamics may
involve a non-invertible map $L,$ due to
the presence of natural growth restrictions, for instance, of logistic type limitations
\cite{Mu-2002}.
The situation becomes even more complicated when we look for periodic points, a problem which
is equivalent to the study of fixed points for iterates of $L.$
For a non-invertible $L,$ the
shape of the $k$-th iterate $L^k=\underbrace{L\circ \dots \circ L}_\text{$k$ times}$
can be extremely far from a monotone function.
This is apparent even when studying
one-dimensional classical logistic equations (see \cite{LiYo-1975}),
as shown in Figure \ref{fig-1.1} where the logistic map, together with its third iterate,
is considered.

\begin{figure}[htp]
\begin{center}
\includegraphics[scale=0.3]{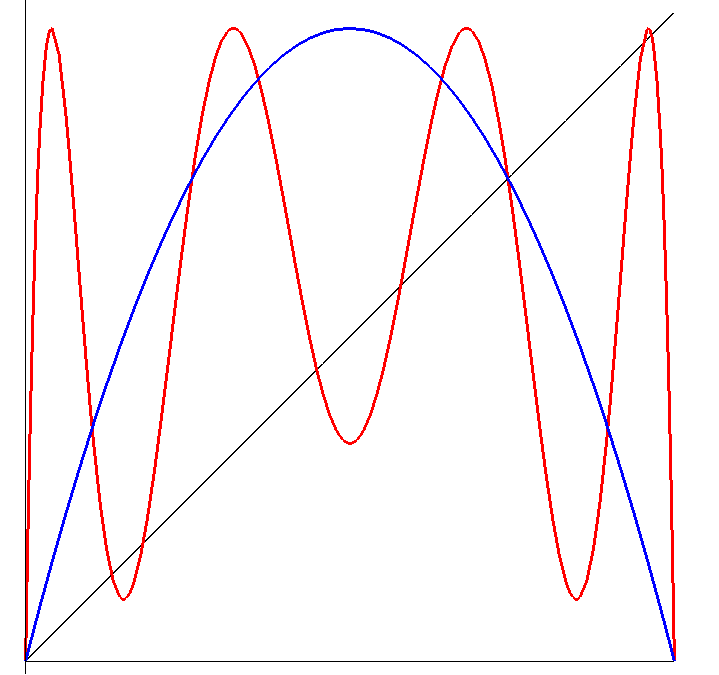}
\end{center}
\caption{\small{The figure represents the graph of
the logistic map $L(x)=rx(1-x)$ and its third iterate $L(L(L(x))$ in the interval
$[0,1].$ For our example, we have considered the case $r=3.9$ where there are points
of (minimal) period three, as seen from the intersections with the line $y=x$, which is known to yield chaotic dynamics, according to \cite{LiYo-1975}.}}
\label{fig-1.1}
\end{figure}

If we assume that the initial point $x_0 := \phi$ is represented by a RV, we are interested in determining the probability distribution
that describes the state of the system after a certain number $k$ of iterates, or, equivalently,
the probabilistic outcome at the $k$-generation.

{\bf b)\;\;}
As a second example, we consider the case of a
continuous dynamical system associated with an ordinary
differential equation. Here, we are interested in
the particular case of a vector system
ordinary differential equations of the form,
\begin{equation}\label{rde-1.1}
\begin{cases}
\frac{d}{dt}\underline{x}(t)=\vec{f}\bigl(\underline{x}(t),t\bigr)\\
\underline{x}(t_0)=(x^0_1,x^0_2, \dots, x^0_i, \dots x^0_d)
\end{cases}
\end{equation}
where $\underline{x}(t) \in {\mathbb R}^d$. Let us assume that the
initial condition $\underline{x}(t_0)$ contains one component,
$x_r$, say, which is a RV, i.e. $x^0_r = \phi$. If, for a moment,
we suppose that $\underline{x}(t_0)$ is a given specific a vector
under natural regularity assumptions \cite{Ha-1980} on $\vec{f},$
we know that there is a unique solution
$\varphi_{\underline{x}(t_0)}(t)$ defined on a maximal interval of
existence. Suppose also that for a fixed interval $[t_0,t_1]$,
$t_1>t_0,$ all the solutions with initial value
$\underline{x}(t_0)$ in a given set $D,$ are defined. In this
case, the map $\Psi: \underline{x}(t_0) \mapsto
\varphi_{\underline{x}(t_0)}(t_1)$ (the so-called Poincar\'{e}
map) is a homeomorphism of $D$ onto $\Psi(D)$, as it is well known
in the theory of Dynamical Systems. Therefore, if
$\underline{x}(t_0)=\phi \in {\mathbb R}$ (i.e. in the
one-dimensional case), the map $\Psi$ must be a strictly monotone
function, and the RVT method can be straightforwardly applied.
However, in all the other situations in which $d>1$, which are
also relevant in Physics, the monotonicity of the maps associated
to $\Psi$ is lost.

A significant example comes from
the \textit{shooting method}. In this case, we study a
second-order scalar differential equation of Newtonian type of the form
\begin{equation}\label{rde-1.2}
y''(t)+f\bigl(y(t),y'(t),t\bigr)=0,
\end{equation}
corresponding to the planar system
\begin{equation}\label{rde-1.2bis}
\begin{cases}
y'(t)= v(t)\\
v'(t) = - f\bigl(y(t),v(t),t\bigr).
\end{cases}
\end{equation}
The initial condition
$$y(t_0)=a, \; y'(t_0)=b,$$
reads now as
$$y(t_0)=a, \; v(t_0)=b.$$
Then, to the pair $(a,b)\in {\mathbb R}^2,$ we associate the point
$$(a_1,b_1):=\varphi_{(a,b)}(t_1)=
\bigl(\varphi^{(1)}_{(a,b)}(t_1),\varphi^{(2)}_{(a,b)}(t_1)\bigr)
\in {\mathbb R}^2,$$ by means of the two-dimensional Poincar\'{e}
map. Suppose now that for the initial pair $(a,b)$ one of the
parameters is determined (for instance the initial position $a$ of
the particle), while the other (for instance the initial velocity
$b$) is a RV. In this case, we set $b=\phi$ and, instead of being
interested in the complete map $\varphi_{(a,b)}(t_1)$, we restrict
ourselves to the map which associates to $\phi$ the final
position, $\varphi^{(1)}_{(a,\phi)}(t_1)$; this map is not
invertible.

In the deterministic case, this procedure consists of fixing the
initial position $\phi$ and look for the final one, given the
initial velocity, which needs to be known at the beginning. Then,
the probability distribution $\mu_Y$ can be reconstructed by {\em
shooting} a bundle of trajectories by varying $\phi$, and
measuring the density of the end points $y(t)$. We are going to
compare the presented algorithm with this simple method, which we
will refer to as the {\em brute-force} algorithm. In this example,
we transform the RV $\phi$ for the initial velocity into a new
random variable for the final position at $t$ but the map $g$
describing the transformation is no longer monotone, as one can
see from the elementary examples, taken from physical and
mechanical models, shown below (see Figure \ref{fig-1.2} and
Figure \ref{fig-1.3} for two possible cases).

\begin{figure}[htp]
\begin{center}
\includegraphics[scale=0.18]{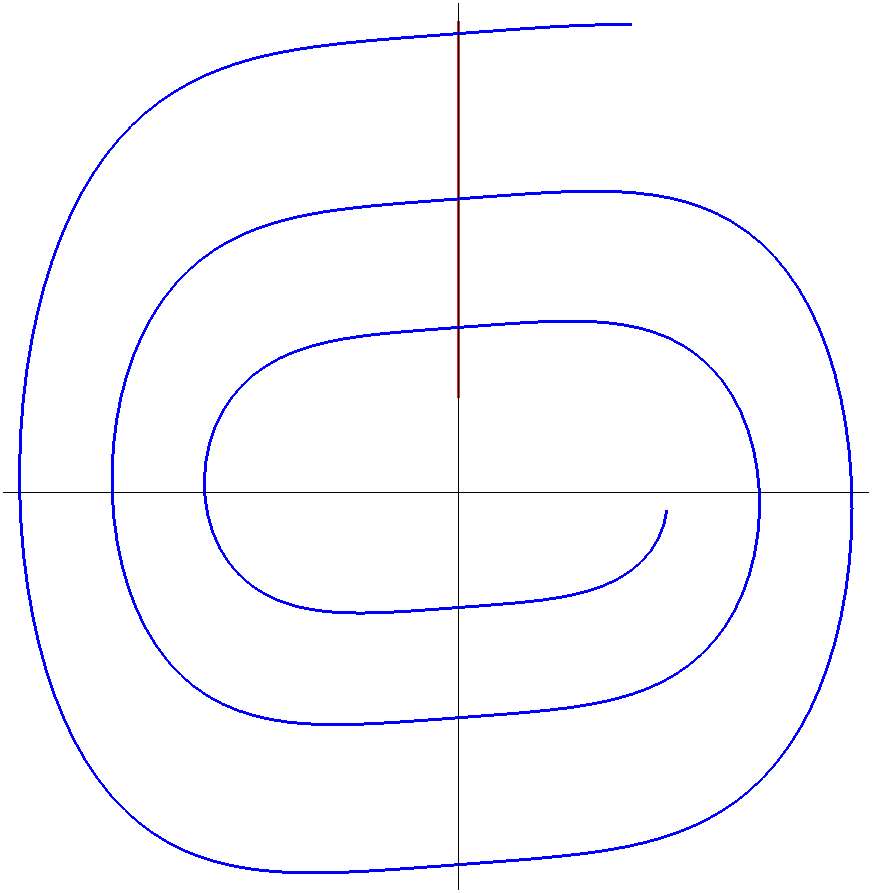}
\quad
\includegraphics[scale=0.18]{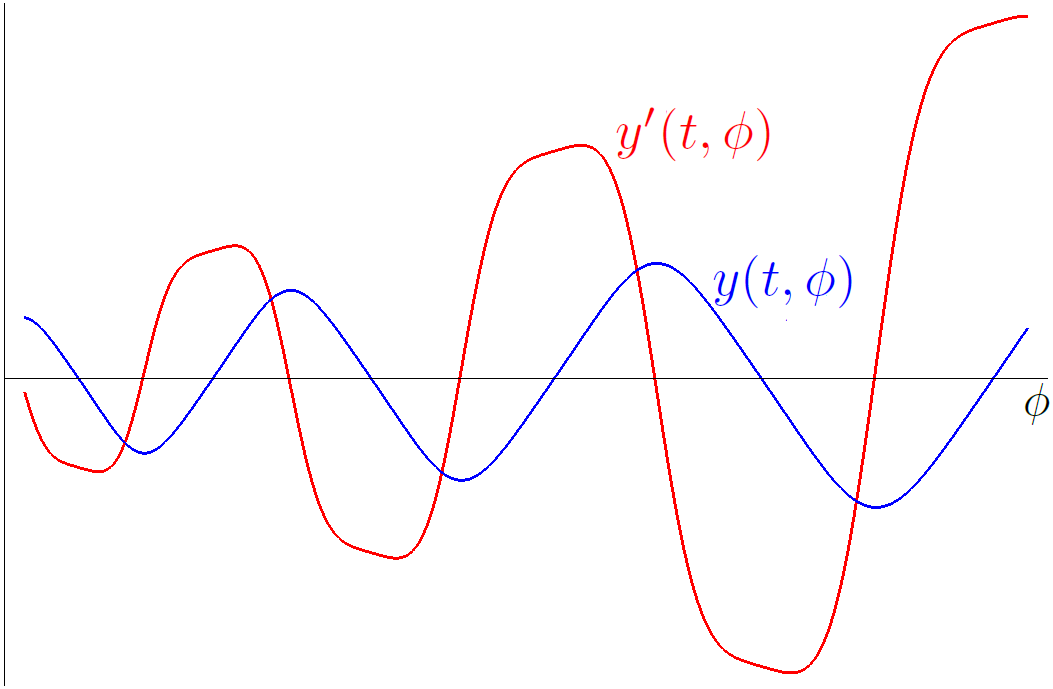}
\end{center}
\caption{\small{The figure illustrates the use of the shooting method
for the Duffing equation, Eq. (\ref{Duffing}).}}
\label{fig-1.2}
\end{figure}

\begin{figure}[htp]
\begin{center}
\includegraphics[scale=0.15]{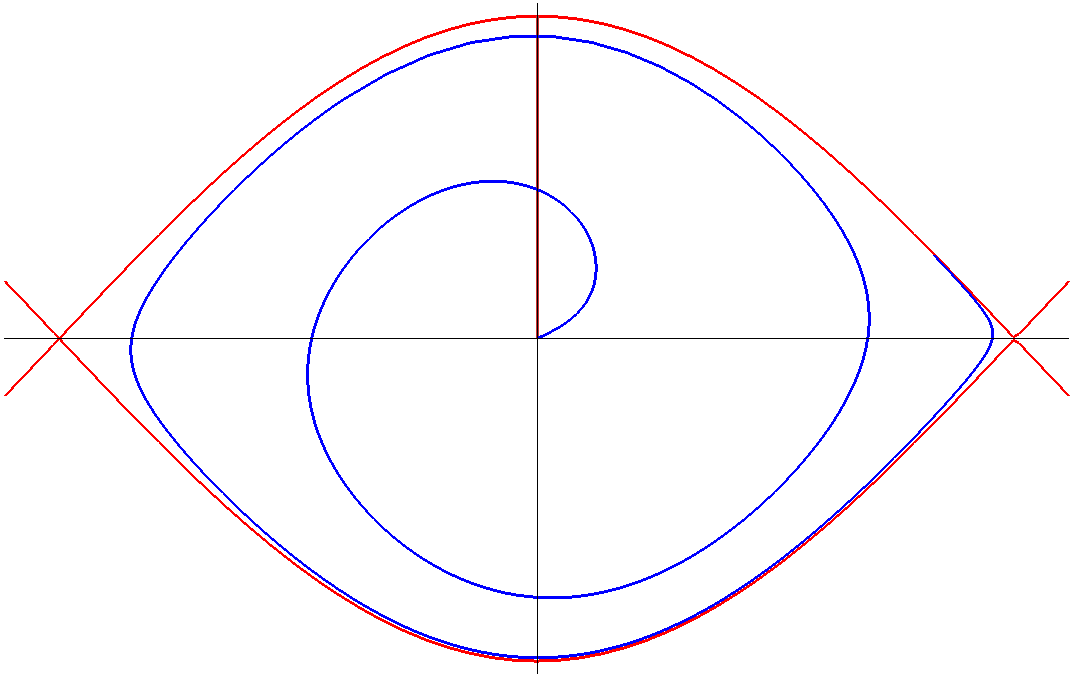}
\quad
\includegraphics[scale=0.15]{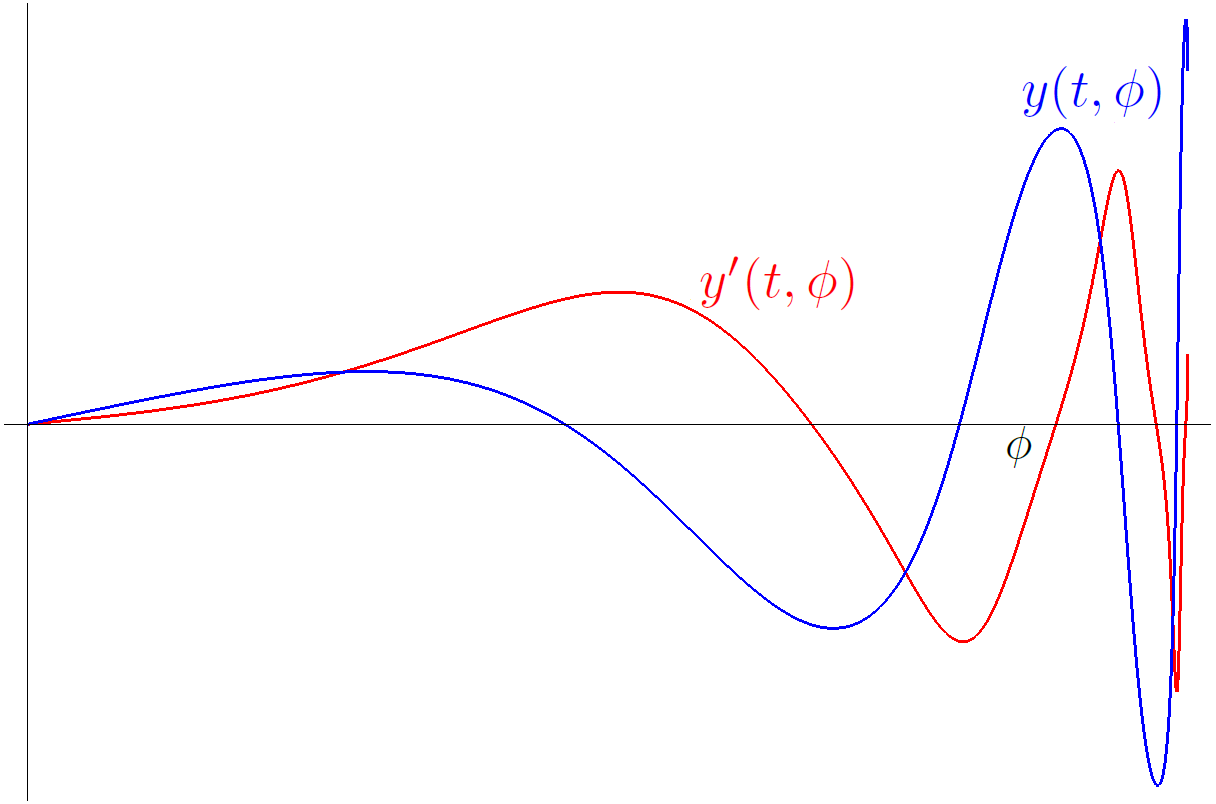}
\end{center}
\caption{\small{The figure illustrates the use of the shooting
method for the pendulum equation, Eq. \eqref{pendulum}.}}
\label{fig-1.3}
\end{figure}

In Figure \ref{fig-1.2}, we consider the Duffing equation
\begin{equation}\label{Duffing}
y''(t) + \rho(y(t))=0,
\end{equation}
with the superlinear term $\rho(y)=4y^3.$ In this case, it is well-known
that the origin is a global center in the phase plane $(y,v)=(y,y')$
and all the orbits around the origin lie on the energy levels
$\frac{1}{2}v^2 + y^4 =\text{constant} >0.$ The left panel shows the image of the
Poincar\'{e} map after time $t=10$ of the vertical segment $y(0)=0,$ $y'(0)=\phi\in [1,5].$
The right panel shows the graph of the map $\varphi$ which associates to $\phi\in [1,5]$
the value of the solutions $y(t,\phi)$, respectively $y'(t,\phi)$, at the time $t=10.$

In Figure \ref{fig-1.3}, we represent the pendulum equation
\begin{equation}\label{pendulum}
y''(t) + \sin(y(t))=0,
\end{equation}
where the origin is a local center in the phase plane $(y,v)=(y,y')$
bounded above and below by two heteroclinic trajectories. These heteroclinic trajectories connect the
unstable equilibrium points $(\pm\pi,0)$, which represent the geometric configuration
of a pendulum going to the inverted vertical position. For an energy constant $0 < c < 2,$
all the orbits around the origin lie within the energy levels
$\frac{1}{2}v^2 + (1-\cos(y)) =c$ and are closed curves relatively to the strip
$]-\pi,\pi[\times{\mathbb R}.$
The left panel shows the image of the
Poincar\'{e} map after time $t=20$ of the vertical segment $y(0)=0,$
$y'(0)=\phi\in ]0,1.998].$
The right panel shows the graph of the map $\varphi$ which associates to $\phi\in [1,5]$
the value of the solutions $y(t,\phi)$, respectively $y'(t,\phi)$, at the time $t=20.$

Both numerical examples above provide simple cases from classical
physical problems, in which the map $g$, is far from
being monotone.

{\bf c)\;\;} As a third example, we consider a variant of case
$b),$ where the dependence of the solution on a random parameter
is more involved. We study a
second-order problem of the form
\begin{equation}\label{rde-1.3}
\begin{cases}
y''(t)+f\bigl(y(t),y'(t),t\bigr) = k\phi\\
y(t_0)=a(\phi), \; y'(t_0)=b.
\end{cases}
\end{equation}
where we have consider a more general dependence on the single random variable other than in the initial conditions, although the RV does not change with time. In this third instance, we can produce very complicated non-monotone maps
from apparently rather simple boundary value problems. let us consider harmonic oscillator with constant external force,
\begin{equation}\label{eq-1.4}
\frac{1}{\omega^2} y''(t) +  y(t) = k\phi
\end{equation}
which produces the mapping
\begin{equation}\label{eq-1.5}
\varphi(\phi,t)=k\phi +A \cos(\omega t+\phi).
\end{equation}
In addition, we further assume the following initial conditions
$$y(0)=k\phi+A, \quad y'(0)=0.$$
This example will be analyzed more in detail in the next section.

\bigskip

Therefore, as we mentioned above, the aim of the present paper is to introduce a new
method for the numerical approximation of $\mu_Y$ from a given
$\mu_X$ in cases in which the theoretical procedure described the
formula \eqref{eq-1.1} is not effectively applicable. Moreover, we stress that our method can be applied even
if we do not have an analytical expression of the function $g$, which however can be numerically constructed.
This is particularly relevant because, in most of the physical
applications, the transformation is obtained only in the last way.

Our approach is developed in detail in Section
\ref{section-2}. In Section
\ref{section-3} we apply our theory to the example given by
\eqref{eq-1.5}, as a proof of concept. The implemented algorithm is
presented in Section \ref{section-4}. Finally, in Section \ref{section-5} we
provide some further numerical examples where we compare our new
method, with the application of \eqref{eq-1.1} and with the
brute force approach. From these examples,
the advantage of the new point of view will be evident particularly with respect to the
numerical applications.

\section{The problem and a fast introduction to FDF}\label{section-2}
Let us consider an ordinary differential system (like, for
instance, \eqref{rde-1.2bis}) where the initial conditions depend
on a one-dimensional unknown random number $\phi$ with mass
probability function
$\mu:[\alpha,\beta]\subset\mathbb{R}\to\mathbb{R}$. As a typical
application we might consider a projection of the solution onto a
one-dimensional subspace, thus obtaining a new mass probability
distribution function (PDF) $\psi(y,t)$ that evolves in time. That
is, we look for the PDF $\mu_Y :=\psi(y,t)$ that describes the
probability of finding our system in the position $y$ at time $t$,
provided that we know the probability distribution $\mu_X :=
\mu(\phi)$.

For each $\phi\in[\alpha,\beta]$, the (projected)
\underline{deterministic} solution of the differential equation is
a function $\varphi_{\phi}(t):[0,t_{max}]\to\mathbb{R}$ that
describes how the system evolves in time $t$. Under standard
smoothness conditions on the vector field (cf. \cite{Ha-1980}) the
function $\varphi(\phi,t):=\varphi_{\phi}(t)$ is of class
$\mathcal{C}^1\bigl([\alpha,\beta]\times[0,t_{max}]\bigr)$. As we
have discussed in the introduction, for any fixed $t>0,$ the map
$\phi\mapsto \varphi_{\phi}(t)$ may be non-invertible. The idea
behind the present work is that the evolution in space
$y(t)=\varphi(\phi,t)$ of the system, spreads, squeezes or
stretches and overlaps or folds the probability mass $\mu(\phi)$
in the space. Then we propose to replace the representation of the
physical system given by $\varphi(\phi,t)$, with a new
mathematical entity, which we call \emph{folding domain function
FDF}, such that we can determine $\psi(y,t)$ with the minimum
computational cost. Although in the present demonstration the FDF
method is applied assuming that we already know the analytical
solution $\varphi$ of the mapping, this does not affect the
applicability of the FDF algorithm (FDF-A) to cases where the
differential system is not analytically solvable, but a suitable
approximation to the solution can be obtained from {\em numerical
integrators}. As a matter of fact, the FDF-A algorithm only
requires the knowledge of a finite set of values
$\varphi_i:=\varphi(\phi_i,t)$ (with $i\in\{1,\dots,N_{div}\}$ and
$N_{div}\in\mathbb{N}$), for some initial conditions
$\phi_i\in[\alpha,\beta]$, which contain all the relevant
information for the mapping and that we will define later on.
Interestingly, these particular values can be determined from the
knowledge of the analytical solution of the differential equation
but, more important, from the application of any numerical solver
of the initial value problem.

\subsection{The folding domain functions}\label{sub-1.1}

Let $X$ be a RV defined on a probabilistic measure space with values in a compact interval $[\alpha,\beta]$
and let $g:[\alpha,\beta]\to [g_{\min},g_{\max}]$ be a continuously differentiable function
which is strictly piecewise monotone. Accordingly, we decompose the domain of $g$ into a finite sequence
of adjacent intervals, by setting
$$\alpha=\alpha_0<\alpha_1<\dots< \alpha_{j-1} < \alpha_{j} <\dots \alpha_{k}=\beta,$$
with the convention that $g$ is strictly monotone with nonzero derivative
on each subinterval $]\alpha_{j-1},\alpha_j[$
and, moreover, each $\alpha_j$ with $0<j<k$ is a
stationary point which is a strict local maximum or
local minimum. Let also
$$A:=\{\alpha_j: j=0,\dots, k\}.$$
Notice that, by construction, the map $g$ is strictly increasing/decreasing restricted
between two consecutive points in $A.$
It will be also convenient to introduce the following notation
$$
\begin{cases}
A_j:= [\alpha_{j-1},\alpha_{j}[, \quad\text{for }\; j=1,\dots, k-1\\
A_k:= [\alpha_{k-1},\alpha_k]= [\alpha_{k-1},\beta],
\end{cases}
$$
so that the intervals $A_j$ ($j=1,\dots,k$)
determine a partition of the interval $[\alpha,\beta].$

We define the vector $\Lambda:=(\lambda_1,\dots,\lambda_k),$
where, for each $j=1,\dots,k,$ we set
$$\lambda_j:= g(\alpha_j)-g(\alpha_{j-1})$$
(see Figure \ref{fig-A1} for an illustrative example).
\begin{figure}[ht]
\begin{center}
\begin{tikzpicture}[scale=0.4]
\pgfmathsetmacro{\p}{0.5}
\draw[->](-1.5,-1.12)--(23,-1.12)node[below]{$x$};
\draw[->](-1,-2.62)--(-1,12)node[left]{$y$}; \draw[ultra
thick,color=magenta](1,-1.12)node[below]{$\alpha\color{gray}=\alpha_0$}--(21,-1.12)node[below]{$\beta\color{gray}=\alpha_k$};
 \draw[thick,color=green] plot[samples=130, domain=1:21] (\x,{(0.08*(\x-7)^2-4*cos(deg(\x-7)))+5)*\p}) node[right] {$g(x)$};
 \pgfmathsetmacro{\bu}{(0.08*(-6)^2-4*cos(deg(-6)))+5)*\p}
 \draw[dashed,gray](-1,\bu)node[left]{$b_1$}--(21,\bu);
 \node[color=yellow] at (-1,\bu+1.3){$\bullet$};
 \node[right,color=pink] at (-1,\bu+1.3){$c_2$};
 \pgfmathsetmacro{\bd}{(0.08*(0.18+pi)^2-4*cos(deg(0.18+pi)))+5)*\p}
 \draw[dashed,gray](-1,\bd)node[left]{$b_2$}--(21,\bd);
 \node[color=yellow] at (-1,\bd+0.9){$\bullet$};
 \node[right,color=pink] at (-1,\bd+0.9){$c_3$};
 \pgfmathsetmacro{\bt}{(0.08-4*cos(deg(0)))+5)*\p}
 \draw[dashed,gray](-1,\bt)node[left]{$b_0$}--(21,\bt);
 \node[color=yellow] at (-1,\bt+0.75){$\bullet$};
 \node[right,color=pink] at (-1,\bt+0.75){$c_1$};
 \node at (-1,\bt)[below right]{$g_{min}$};
 \pgfmathsetmacro{\bq}{(0.08*(0.41+3*pi)^2-4*cos(deg(0.41+3*pi)))+5)*\p}
 \draw[dashed,gray](-1,\bq)node[left]{$b_4$}--(21,\bq);
 \node[color=yellow] at (-1,\bq+0.85){$\bullet$};
 \node[right,color=pink] at (-1,\bq+0.85){$c_5$};
 \pgfmathsetmacro{\bc}{(0.08*(-0.48+4*pi)^2-4*cos(deg(-0.48+4*pi)))+5)*\p}
 \draw[dashed,gray](-1,\bc)node[left]{$b_3$}--(21,\bc);
 \node[color=yellow] at (-1,\bc+0.75){$\bullet$};
 \node[right,color=pink] at (-1,\bc+0.75){$c_4$};
 \pgfmathsetmacro{\bs}{(0.08*(14)^2-4*cos(deg(14)))+5)*\p}
 \draw[dashed,gray](-1,\bs)node[left]{$b_5=b_{\ell}$}--(21,\bs);
 \node at (-1,\bs)[above right]{$g_{max}$};
 \draw[dashed,color=gray]({7-0.15-pi},2)--({7-0.15-pi},-1.12)node[below]{$\alpha_1$};
 \draw[->,color=red,thick]({7-0.15-pi},2)--({7-0.15-pi},{2+\bd-\bu})node[above]{$\lambda_1$};
 \draw[dashed,color=gray](7,\bt)--(7,-1.12)node[below]{$\alpha_2$};
 \draw[->,color=blue,thick](7,{2+\bd-\bu})node[above]{$\lambda_2$}--(7,\bt);
 \draw[dashed,color=gray]({7+0.18+pi},2)--({7+0.18+pi},-1.12)node[below]{$\alpha_3$};
 \draw[->,color=red,thick]({7+0.18+pi},2)--({7+0.18+pi},{2+\bd-\bu})node[above]{$\lambda_3$};
 \draw[dashed,color=gray]({7-0.26+2*pi},{2+\bd-\bu})--({7-0.26+2*pi},-1.12)node[below]{$\alpha_4$};
 \draw[->,color=blue,thick]({7-0.26+2*pi},{2+\bd-\bu})node[above]{$\lambda_4$}--({7-0.26+2*pi},2);
 \draw[dashed,color=gray]({7+0.41+3*pi},2)--({7+0.41+3*pi},-1.12)node[below]{$\alpha_5$};
 \draw[->,color=red,thick]({7+0.41+3*pi},2)--({7+0.41+3*pi},\bq)node[above]{$\lambda_5$};
  \draw[dashed,color=gray]({7-0.48+4*pi},\bc)--({7-0.48+4*pi},-1.12)node[below]{$\alpha_6$};
 \draw[->,color=blue,thick]({7-0.48+4*pi},\bq)node[above]{$\lambda_6$}--({7-0.48+4*pi},\bc);
 \draw[dashed,color=gray](21,\bc)--(21,-1.12);
 \draw[->,color=red,thick](21,\bc)--(21,\bs)node[above]{$\lambda_7$};
\end{tikzpicture}
\end{center}
\caption{The function $g$.} \label{fig-A1}
\end{figure}
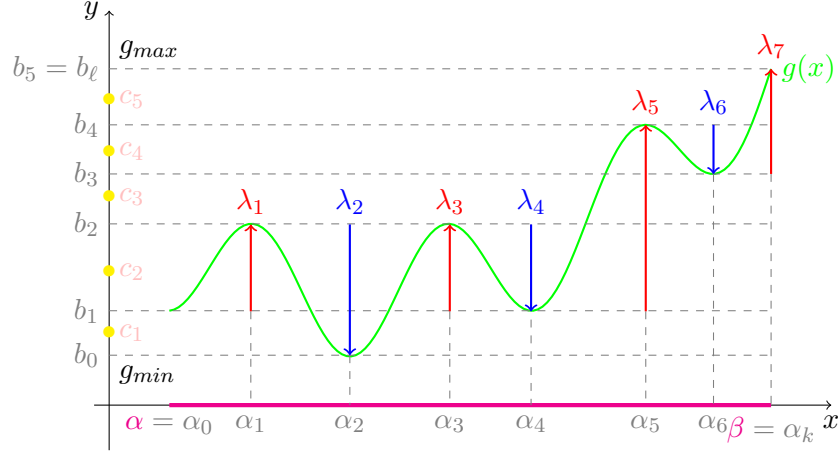
We also introduce the constant
$$S=S_{\Lambda}:= \sum_{j=1}^{k} |\lambda_j|.$$

As a next step, we define the transformation $\hat{g}:[\alpha,\beta]\to [0,S]$ as
\begin{equation}\label{eq-mgj}
\begin{cases}
\hat{g}(x)= m_{j-1} + |g(x)-g(\alpha_{j-1})|,
\quad\text{for }\; x\in [\alpha_{j-1},\alpha_{j}]\\
j=1,\dots, k,
\end{cases}
\end{equation}
where
$$m_0 = 0 \quad \text{and }\; \;
m_j:= |\lambda_1|+\dots+|\lambda_{j}|, \quad j=1,\dots,k.$$
By definition, $\hat{g}(\alpha)=0$ and $\hat{g}(\beta)=S.$
The formula \eqref{eq-mgj} defining $\hat{g}$ is introduced as
a way to unfold the graph of $g.$
Indeed, transformation $\hat{g}$ acts as follows: given a graph of a
piecewise monotone function $g,$ first we shift it as to have
value $0$ at $x=\alpha.$ Then, if $[\alpha_{j-1},\alpha_{j}]$ is
an interval where $g$ is increasing, we further shift this portion
of the graph of $g$ by the quantity $m_{j-1}-g(\alpha_{j-1})=
|\lambda_1|+\dots+|\lambda_{j-1}|-g(\alpha_{j-1}).$ On the other
hand, if $[\alpha_{j-1},\alpha_{j}]$ is an interval where $g$ is
decreasing, we reflect the graph of $g$ symmetrically with respect
to the horizontal line $y= g(\alpha_{j-1})$ and then we shift it
as to obtain the graph of a continuous and monotonously increasing function (see
Figure \ref{fig-A2} for an illustrative example).
Observe that, if $g$ is smooth (of class $C^1$), $\hat{g}$ is smooth, too.

\begin{figure}[ht]
\begin{center}
\begin{tikzpicture}[scale=0.33]
\pgfmathsetmacro{\p}{0.5} \draw[opacity=0.3,ultra
thick,->](-1.5,-1.12)--(25,-1.12)node[below]{$x$};
\draw[opacity=0.3,ultra
thick,->](-1,-1.62)--(-1,12)node[left]{$y$};
 \draw[->](-1.5,2)--(25,2)node[below]{$\hat{x}$};
\draw[->](-1,1.6)--(-1,29)node[left]{$u$}; \draw[ultra
thick,color=magenta](1,2)node[below
right]{$\alpha$}--(21,2)node[below right]{$\beta$};
 \draw[ultra thick,color=green] plot[samples=130, domain=1:21] (\x,{(0.08*(\x-7)^2-4*cos(deg(\x-7)))+5)*\p}) node[right] {$g(x)$};
\draw[thick,color=brown] plot[samples=30, domain=1:6.85-pi]
(\x,{(0.08*(\x-7)^2-4*cos(deg(\x-7)))+5)*\p});
\pgfmathsetmacro{\a}{0.08*(-0.15-pi)^2-4*cos(deg(-0.15-pi)))+5}
\draw[thick,color=brown] plot[samples=30, domain=6.85-pi:7]
(\x,{(\a-0.08*(\x-7)^2+4*cos(deg(\x-7)))+4.8)*\p});
\pgfmathsetmacro{\b}{-0.08*(-7)^2+4*cos(deg(-7)))+5}
\draw[thick,color=brown] plot[samples=30, domain=7:7.18+pi]
(\x,{(\a+\b+0.08*(\x-7)^2-4*cos(deg(\x-7)))+8.68)*\p});
\draw[thick,color=brown] plot[samples=30,
domain=7.18+pi:6.84+2*pi]
(\x,{(\a+\b-0.08*(\x-7)^2+4*cos(deg(\x-7)))+18.3)*\p});
\draw[thick,color=brown] plot[samples=30,
domain=6.84+2*pi:7.41+3*pi]
(\x,{(\a+\b+0.08*(\x-7)^2-4*cos(deg(\x-7)))+20.25)*\p});
\draw[thick,color=brown] plot[samples=30,
domain=7.41+3*pi:6.62+4*pi]
(\x,{(\a+\b-0.08*(\x-7)^2+4*cos(deg(\x-7)))+43.1)*\p});
\draw[thick,color=brown] plot[samples=30, domain=6.62+4*pi:21]
(\x,{(\a+\b+0.08*(\x-7)^2-4*cos(deg(\x-7)))+26.8)*\p})node[right]
{$\hat{g}(x)$};
\draw[dashed,gray](1,-1.12)node[below]{$\alpha_0$}--(1,28);
 \draw[dashed,gray](7-0.15-pi,-1.12)node[below]{$\alpha_1$}--(7-0.15-pi,28);
 \draw[dashed,gray](7,-1.12)node[below]{$\alpha_2$}--(7,28);
 \draw[dashed,gray](7+0.18+pi,-1.12)node[below]{$\alpha_3$}--(7+0.18+pi,28);
 \draw[dashed,gray]({7-0.26+2*pi},-1.12)node[below]{$\alpha_4$}--({7-0.26+2*pi},28);
 \draw[dashed,gray]({7+0.41+3*pi},-1.12)node[below]{$\alpha_5$}--({7+0.41+3*pi},28);
 \draw[dashed,gray]({7-0.48+4*pi},-1.12)node[below]{$\alpha_6$}--({7-0.48+4*pi},28);
 \draw[dashed,gray](21,-1.12)node[below]{$\alpha_7$}--(21,28);
 \pgfmathsetmacro{\bu}{(0.08*(-6)^2-4*cos(deg(-6)))+5)*\p}
 \pgfmathsetmacro{\bd}{(0.08*(0.18+pi)^2-4*cos(deg(0.18+pi)))+5)*\p}
 \pgfmathsetmacro{\bt}{(0.08-4*cos(deg(0)))+5)*\p}
 \pgfmathsetmacro{\bq}{(0.08*(0.41+3*pi)^2-4*cos(deg(0.41+3*pi)))+5)*\p}
 \pgfmathsetmacro{\bc}{(0.08*(-0.48+4*pi)^2-4*cos(deg(-0.48+4*pi)))+5)*\p}
 \pgfmathsetmacro{\bs}{(0.08*(14)^2-4*cos(deg(14)))+5)*\p}
   \draw[->,color=red,thick](-1,2)--(-1,{2+\bd-\bu})node[left,color=black]{$m_1$};
   \node at (-1,{1+\bd-\bu})[below right,color=red]{$\lambda_1$};
   \draw[dashed](-1,{2+\bd-\bu})--({7-0.15-pi},{2+\bd-\bu});
   \draw[->,color=blue,thick](-1,{2+\bd-\bu})--(-1,{2+\bd-\bu+\bd-\bt})node[left,color=black]{$m_2$};
   \node at (-1,{1+\bd-\bu+\bd-\bt})[below right,color=blue]{$\lambda_2$};
   \draw[dashed](-1,{2+\bd-\bu+\bd-\bt})--(7,{2+\bd-\bu+\bd-\bt});
   \pgfmathsetmacro{\yu}{13.7}
   \pgfmathsetmacro{\yd}{16.62}
   \draw[->,color=red,thick](-1,{2+\bd-\bu+\bd-\bt})--(-1,\yu)node[left,color=black]{$m_3$};
   \node at (-1,\yu-1.5)[below right,color=red]{$\lambda_3$};
   \draw[dashed](-1,\yu)--({7+0.18+pi},\yu);
   \draw[->,color=blue,thick](-1,\yu)--(-1,\yd)node[left,color=black]{$m_4$};
   \node at (-1,\yd-0.5)[below right,color=blue]{$\lambda_4$};
   \draw[dashed](-1,\yd)--({7-0.26+2*pi},\yd);
   \pgfmathsetmacro{\yt}{22.8}
   \draw[->,color=red,thick](-1,\yd)--(-1,\yt)node[left,color=black]{$m_5$};
   \node at (-1,\yt-2)[below right,color=red]{$\lambda_5$};
   \draw[dashed](-1,\yt)--({7+0.41+3*pi},\yt);
   \pgfmathsetmacro{\yq}{24.45}
   \draw[->,color=blue,thick](-1,\yt)--(-1,\yq)node[left,color=black]{$m_6$};
   \node at (-1,\yq-0.2)[below right,color=blue]{$\lambda_6$};
   \draw[dashed](-1,\yq)--({7-0.48+4*pi},\yq);
   \pgfmathsetmacro{\yc}{27.9}
   \draw[->,color=red,thick](-1,\yq)--(-1,\yc)node[left,color=black]{$S_{\Lambda}=m_7$};
   \node at (-1,\yc-1)[below right,color=red]{$\lambda_7$};
   \draw[dashed](-1,\yc)--(21,\yc);
\end{tikzpicture}
\end{center}
\caption{The functions $g$ and $\hat{g}$.} \label{fig-A2}
\end{figure}
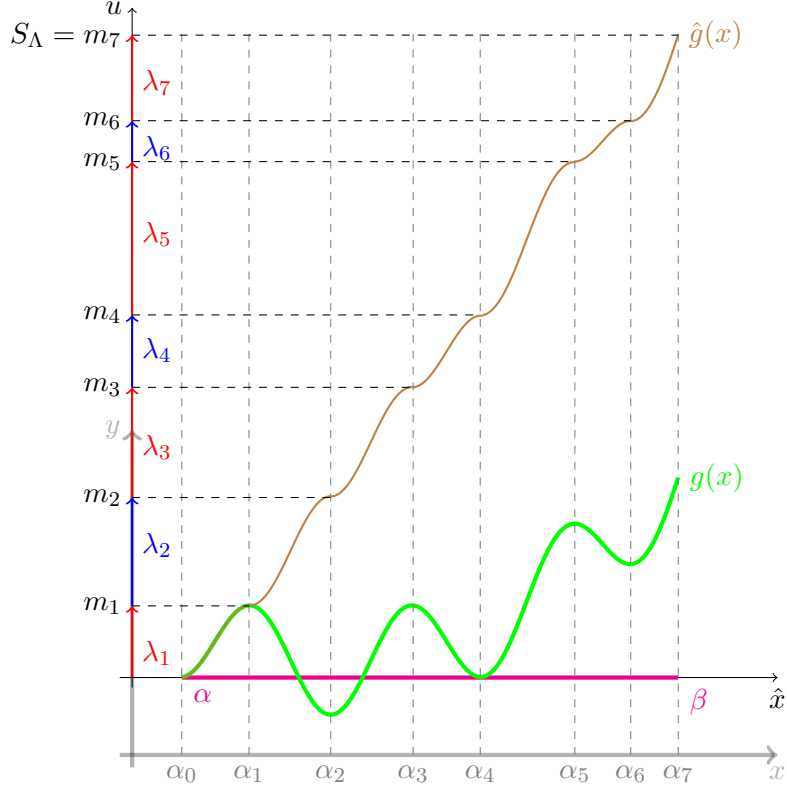

By construction, for any continuously differentiable and piecewise
monotone function $g:[\alpha,\beta]\to [g_{\min},g_{\max}]$
we have that $\hat{g}:[\alpha,\beta]\to [0,S]$ is strictly monotonously increasing with inflection
points at $\alpha_i$ ($i=1,\dots,k-1$) and, moreover,
$$\frac{d}{dx}\hat{g}(x)= \left|\frac{d}{dx}g(x)\right|,\quad\forall\, x\in [\alpha,\beta].$$
At this point, the inverse function of $\hat{g}$ is well defined as
$$\eta:[0,S]\to [\alpha,\beta].$$
This new map is again strictly monotonously increasing and continuously differentiable on
$[0,S]\setminus\{\hat{g}(\alpha_i), i=1,\dots, k-1\},$ where the derivative in the excluded points exists
with value $+\infty$ (see Figure \ref{fig-A3}).

\begin{figure}[ht]
\begin{center}
\begin{tikzpicture}[scale=0.35]
\pgfmathsetmacro{\p}{0.5}
 \draw[->](2,-1.5)--(2,23)node[left]{$\hat{x}$};
\draw[->](1.4,-1)--(29,-1)node[right]{$u$}; \draw[ultra
thick,color=magenta](2,1)node[below
right]{$\alpha$}--(2,21)node[below right]{$\beta$};
\draw[thick,color=orange] plot[samples=30, domain=1:6.85-pi]
({(0.08*(\x-7)^2-4*cos(deg(\x-7)))+5)*\p},\x);
\pgfmathsetmacro{\a}{0.08*(-0.15-pi)^2-4*cos(deg(-0.15-pi)))+5}
\draw[thick,color=orange] plot[samples=30, domain=6.85-pi:7]
({(\a-0.08*(\x-7)^2+4*cos(deg(\x-7)))+4.8)*\p},\x);
\pgfmathsetmacro{\b}{-0.08*(-7)^2+4*cos(deg(-7)))+5}
\draw[thick,color=orange] plot[samples=30, domain=7:7.18+pi]
({(\a+\b+0.08*(\x-7)^2-4*cos(deg(\x-7)))+8.68)*\p},\x);
\draw[thick,color=orange] plot[samples=30,
domain=7.18+pi:6.84+2*pi]
({(\a+\b-0.08*(\x-7)^2+4*cos(deg(\x-7)))+18.3)*\p},\x);
\draw[thick,color=orange] plot[samples=30,
domain=6.84+2*pi:7.41+3*pi]
({(\a+\b+0.08*(\x-7)^2-4*cos(deg(\x-7)))+20.25)*\p},\x);
\draw[thick,color=orange] plot[samples=30,
domain=7.41+3*pi:6.62+4*pi]
({(\a+\b-0.08*(\x-7)^2+4*cos(deg(\x-7)))+43.1)*\p},\x);
\draw[thick,color=orange] plot[samples=30, domain=6.62+4*pi:21]
({(\a+\b+0.08*(\x-7)^2-4*cos(deg(\x-7)))+26.8)*\p},\x)node[right]
{$\eta(u)$};
\draw[dashed,gray](1,1)node[left]{$\alpha_0$}--(28,1);
 \draw[dashed,gray](1,7-0.15-pi)node[below]{$\alpha_1$}--(28,7-0.15-pi);
 \draw[dashed,gray](1,7)node[left]{$\alpha_2$}--(28,7);
 \draw[dashed,gray](1,7+0.18+pi)node[left]{$\alpha_3$}--(28,7+0.18+pi);
 \draw[dashed,gray](1,{7-0.26+2*pi})node[left]{$\alpha_4$}--(28,{7-0.26+2*pi});
 \draw[dashed,gray](1,{7+0.41+3*pi})node[left]{$\alpha_5$}--(28,{7+0.41+3*pi});
 \draw[dashed,gray](1,{7-0.48+4*pi})node[left]{$\alpha_6$}--(28,{7-0.48+4*pi});
 \draw[dashed,gray](1,21)node[left]{$\alpha_7$}--(28,21);
 \pgfmathsetmacro{\bu}{(0.08*(-6)^2-4*cos(deg(-6)))+5)*\p}
 \pgfmathsetmacro{\bd}{(0.08*(0.18+pi)^2-4*cos(deg(0.18+pi)))+5)*\p}
 \pgfmathsetmacro{\bt}{(0.08-4*cos(deg(0)))+5)*\p}
 \pgfmathsetmacro{\bq}{(0.08*(0.41+3*pi)^2-4*cos(deg(0.41+3*pi)))+5)*\p}
 \pgfmathsetmacro{\bc}{(0.08*(-0.48+4*pi)^2-4*cos(deg(-0.48+4*pi)))+5)*\p}
 \pgfmathsetmacro{\bs}{(0.08*(14)^2-4*cos(deg(14)))+5)*\p}
   \draw[->,color=red,thick](2,-1)--({2+\bd-\bu},-1)node[below,color=black]{$m_1$};
   \node at ({2+\bd-\bu-0.5},-1) [above left,color=red]{$\lambda_1$};
   \draw[dashed]({2+\bd-\bu},-1)--({2+\bd-\bu},{7-0.15-pi});
   \draw[->,color=blue,thick]({2+\bd-\bu},-1)--({2+\bd-\bu+\bd-\bt},-1)node[below,color=black]{$m_2$};
   \node at ({2+\bd-\bu+\bd-\bt-1},-1) [above left,color=blue]{$\lambda_2$};
   \draw[dashed]({2+\bd-\bu+\bd-\bt},-1)--({2+\bd-\bu+\bd-\bt},7);
   \pgfmathsetmacro{\yu}{13.7}
   \pgfmathsetmacro{\yd}{16.62}
   \draw[->,color=red,thick]({2+\bd-\bu+\bd-\bt},-1)--(\yu,-1)node[below,color=black]{$m_3$};
   \node at (\yu-1.5,-1) [above left,color=red]{$\lambda_3$};
   \draw[dashed](\yu,-1)--(\yu,{7+0.18+pi});
   \draw[->,color=blue,thick](\yu,-1)--(\yd,-1)node[below,color=black]{$m_4$};
   \node at (\yd-0.5,-1)[above left,color=blue]{$\lambda_4$};
   \draw[dashed](\yd,-1)--(\yd,{7-0.26+2*pi});
   \pgfmathsetmacro{\yt}{22.8}
   \draw[->,color=red,thick](\yd,-1)--(\yt,-1)node[below,color=black]{$m_5$};
   \node at (\yt-2,-1)[above left,color=red]{$\lambda_5$};
   \draw[dashed](\yt,-1)--(\yt,{7+0.41+3*pi});
   \pgfmathsetmacro{\yq}{24.45}
   \draw[->,color=blue,thick](\yt,-1)--(\yq,-1)node[below,color=black]{$m_6$};
   \node at (\yq-0.2,-1)[above left,color=blue]{$\lambda_6$};
   \draw[dashed](\yq,-1)--(\yq,{7-0.48+4*pi});
   \pgfmathsetmacro{\yc}{27.9}
   \draw[->,color=red,thick](\yq,-1)--(\yc,-1)node[below,color=black]{$S_{\Lambda}=m_7$};
   \node at (\yc-1,-1)[above left,color=red]{$\lambda_7$};
   \draw[dashed](\yc,-1)--(\yc,21);
\end{tikzpicture}
\end{center}
\caption{The function $\eta$.} \label{fig-A3}
\end{figure}
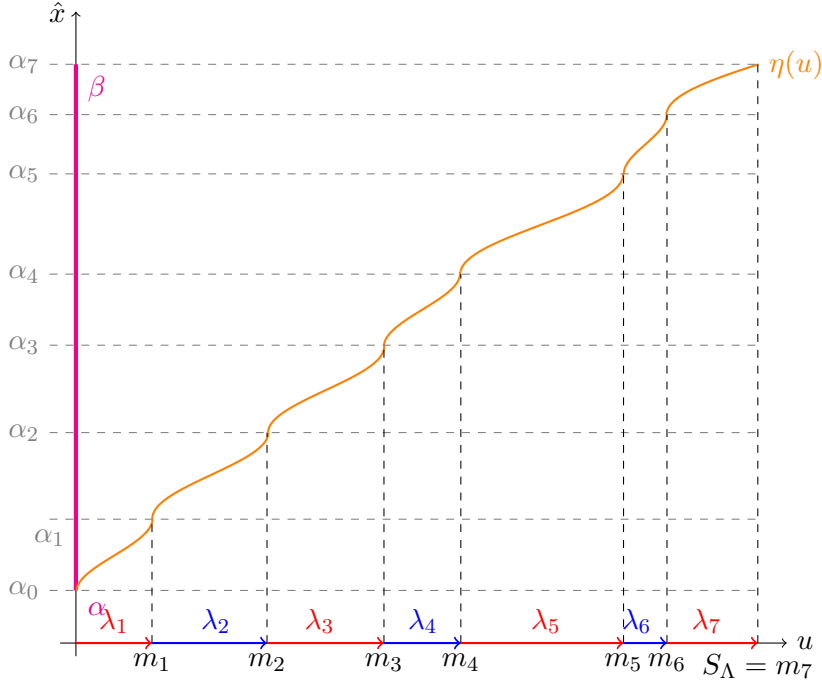

Let us consider the set
\begin{equation*}
\begin{split}
B=\{g(\alpha_{i}), i=0,\dots,k\} = \{b_0,b_1,\dots,b_{\ell}\},\\
\text{with } \qquad
g_{\min}=b_0 < b_1 < \dots < b_{\ell} = g_{\max}
\end{split}
\end{equation*}
and decompose $[g_{\min},g_{\max}]$ into non-overlapping sets $B_i$ as
$$[g_{\min},g_{\max}] = B_1\cup B_2\cup \dots \cup B_{\ell},$$
where
$$
\begin{cases}
B_i:= [b_{i-1},b_{i}[, \quad\text{for }\; i=1,\dots, \ell-1\\
B_{\ell}:= [b_{\ell-1},b_{\ell}]= [b_{\ell-1},g_{\max}].
\end{cases}
$$
Note that for each $y\in [g_{\min},g_{\max}]$ there is a (nonempty) finite set of
points in the inverse image $g^{-1}(\{y\})\subset [\alpha,\beta].$ Thus the (finite)
set of indexes
$$I(y):=\{j\in \{1,\dots,k\}: \exists \, x\in A_j\,: g(x)=y\}$$
is well defined.

\begin{lemma}\label{lem-A1}
If $W\subset [\alpha,\beta]$ is an open interval such that $g(W)=]b_{i-1},b_{i}[$
for some $i\in\{1,\dots,\ell\},$ then $g$ is strictly monotone in $W.$
\end{lemma}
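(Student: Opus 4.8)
The plan is to establish the following dichotomy: either $W$ is disjoint from the interior partition points $\alpha_1,\dots,\alpha_{k-1}$, in which case the monotonicity of $g$ on $W$ is immediate, or $W$ contains one of them, in which case the hypothesis $g(W)=\,]b_{i-1},b_i[$ cannot hold.

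First I would recall the observation recorded right after the definition of the set $A$: the restriction of $g$ to each closed interval $[\alpha_{j-1},\alpha_j]$ is strictly monotone (it is strictly monotone with non-vanishing derivative on the open interval, and strict monotonicity extends to the closure by continuity). Consequently, if $W$ meets none of $\alpha_1,\dots,\alpha_{k-1}$, then, being connected, $W$ lies inside a single connected component of $[\alpha,\beta]\setminus\{\alpha_1,\dots,\alpha_{k-1}\}$ --- one of $[\alpha,\alpha_1)$, the open intervals $]\alpha_{j-1},\alpha_j[$ with $1<j<k$, or $]\alpha_{k-1},\beta]$ --- and each such component is contained in some $[\alpha_{j-1},\alpha_j]$; hence $g|_W$ is strictly monotone.

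It then remains to rule out the alternative, namely that $\alpha_j\in W$ for some $j$ with $0<j<k$. In that case $g(\alpha_j)\in g(W)=\,]b_{i-1},b_i[$; but, by the very definition of $B=\{b_0<b_1<\dots<b_\ell\}=\{g(\alpha_i):i=0,\dots,k\}$, the number $g(\alpha_j)$ is an element of $B$, while the open interval $]b_{i-1},b_i[$ contains no element of $B$, since $b_{i-1}$ and $b_i$ are consecutive entries in that strictly increasing enumeration. This contradiction completes the proof. I do not anticipate any real obstacle: the only mildly delicate ingredient is the elementary topological remark that a connected subset of $[\alpha,\beta]$ avoiding the finitely many turning points must lie within one interval of monotonicity, and everything else is a matter of reading off the definitions of $B$ and of piecewise strict monotonicity.
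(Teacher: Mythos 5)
Your argument is correct and rests on the same key observation as the paper's proof: since $b_{i-1}$ and $b_i$ are consecutive elements of $B=g(A)$, the image $g(W)=\,]b_{i-1},b_{i}[$ contains no point of $B$, so $W$ cannot contain any of the turning points $\alpha_j$. The only difference is in the finishing step and is essentially cosmetic --- the paper concludes that $g'$ never vanishes on $W$ and hence $g$ is strictly monotone there, whereas you conclude that $W$, being connected and avoiding the interior points $\alpha_1,\dots,\alpha_{k-1}$, must lie inside a single interval $[\alpha_{j-1},\alpha_j]$ of strict monotonicity; both are valid.
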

\begin{proof}
For the proof it is sufficient to observe that, by our assumption,
the only possible critical points of $g$ are contained in the set $A.$ On the other hand,
$B=g(A)$ and $]b_{i-1},b_{i}[\subset [g_{\min},g_{\max}]\setminus B.$ As a consequence,
$g'(x)\not=0$ for all $x\in W$ and therefore, $g$ is strictly monotone in $W.$
\end{proof}

\begin{lemma}\label{lem-A2}
For each $y\in ]b_{i-1},b_{i}[$ (where $i\in\{1,\dots,\ell\}$ an arbitrary index),
the set $I(y)$ is constant. Therefore, $I(y)=I(c_i)$ where
$c_i:=\frac{b_{i-1}+b_{i}}{2}$.
\end{lemma}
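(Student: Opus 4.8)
The plan is to fix an index $i\in\{1,\dots,\ell\}$ and to characterize, for $y\in\,]b_{i-1},b_i[$, membership of an index $j$ in $I(y)$ by a condition that does not involve $y$ at all; constancy of $I(y)$ on $]b_{i-1},b_i[$ then follows immediately, and the value $I(c_i)$ is obtained by noting that $c_i=\frac{b_{i-1}+b_i}{2}\in\,]b_{i-1},b_i[$ since $b_{i-1}<b_i$.

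First I would pass from the half-open interval $A_j$ to the closed interval $[\alpha_{j-1},\alpha_j]$. On $[\alpha_{j-1},\alpha_j]$ the map $g$ is continuous and, by the standing assumption that $g$ is strictly monotone on $]\alpha_{j-1},\alpha_j[$ together with continuity at the endpoints, strictly monotone on the whole closed interval; hence $g$ is a homeomorphism of $[\alpha_{j-1},\alpha_j]$ onto the compact interval $J_j:=g([\alpha_{j-1},\alpha_j])$, whose two endpoints are $g(\alpha_{j-1})$ and $g(\alpha_j)$, both lying in $B$. Now fix $y\in\,]b_{i-1},b_i[$. Since $]b_{i-1},b_i[\,\cap B=\emptyset$, no point of $A=\{\alpha_m\}$ is mapped by $g$ to $y$; in particular, if $g(x)=y$ with $x\in[\alpha_{j-1},\alpha_j]$, then $x\in\,]\alpha_{j-1},\alpha_j[\,\subseteq A_j$. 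Combining this with the surjectivity of $g|_{[\alpha_{j-1},\alpha_j]}$ onto $J_j$ yields the equivalence
$$ j\in I(y)\quad\Longleftrightarrow\quad y\in J_j,\qquad\text{for every }y\in\,]b_{i-1},b_i[. $$

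Second, I would show that for $y\in\,]b_{i-1},b_i[$ the condition $y\in J_j$ is equivalent to the $y$-free condition $]b_{i-1},b_i[\,\subseteq J_j$. One implication is trivial. For the other, write $J_j=[p,q]$ with $p,q\in B$ and $p\le y\le q$. Since the elements of $B$ are precisely $b_0<b_1<\dots<b_\ell$ and $p\le y<b_i$, and the largest element of $B$ strictly below $b_i$ is $b_{i-1}$, we get $p\le b_{i-1}$; symmetrically, from $q\ge y>b_{i-1}$ we get $q\ge b_i$. Hence $[b_{i-1},b_i]\subseteq J_j$, and a fortiori $]b_{i-1},b_i[\,\subseteq J_j$.

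Putting the two steps together, for every $y\in\,]b_{i-1},b_i[$ we obtain
$$ I(y)=\{\,j\in\{1,\dots,k\}\ :\ ]b_{i-1},b_i[\,\subseteq J_j\,\}, $$
a set independent of $y$; evaluating at $y=c_i$ gives $I(y)=I(c_i)$, which is the assertion. The only delicate point I anticipate is the bookkeeping between the half-open intervals $A_j$ and the closed intervals $[\alpha_{j-1},\alpha_j]$ — that is, checking that a preimage of $y$ lying in $[\alpha_{j-1},\alpha_j]$ in fact lies in $A_j$ — which is exactly what the observation $y\notin B$ takes care of; everything else is elementary. (Alternatively, one could argue via Lemma \ref{lem-A1}: each connected component of the open set $g^{-1}(]b_{i-1},b_i[)$ is contained in a single $A_j$ and is mapped by $g$ monotonically and bijectively onto $]b_{i-1},b_i[$, so the set of indices $j$ for which $A_j$ contains a preimage of $y$ does not depend on $y$; but the direct argument above is shorter.)
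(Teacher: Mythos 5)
Your proof is correct, and it follows a genuinely different route from the paper's. The paper argues through Lemma \ref{lem-A1}: it lists the preimages $\gamma_1<\dots<\gamma_p$ of the midpoint $c_i$, surrounds each by a maximal open interval $W_s$ with $g(W_s)=\,]b_{i-1},b_i[$, invokes Lemma \ref{lem-A1} to get strict monotonicity on each $W_s$, deduces that the $W_s$ are pairwise disjoint and that $g^{-1}(\,]b_{i-1},b_{i}[\,)=W_1\cup\dots\cup W_p$, and finally checks that each $W_s$ lies in exactly one $A_j$. You bypass Lemma \ref{lem-A1} altogether and instead characterize membership $j\in I(y)$, for $y\in\,]b_{i-1},b_i[$, by the $y$-free condition $]b_{i-1},b_i[\,\subseteq J_j$ with $J_j:=g([\alpha_{j-1},\alpha_j])$, using only that $g$ is a monotone homeomorphism on each closed branch, that the endpoints of $J_j$ lie in $B$, and that $y\notin B$ (which also settles the half-open versus closed bookkeeping you flag). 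What your argument buys: it is shorter, purely order-theoretic, and the resulting description $I(y)=\{j:\,]b_{i-1},b_i[\,\subseteq J_j\}$ is essentially the algorithmic criterion \eqref{eq-yglambda} the paper uses later, so it ties the lemma directly to the implementation. What the paper's argument buys: the explicit decomposition of $g^{-1}(\,]b_{i-1},b_{i}[\,)$ into disjoint monotone branches $W_s$, which is the form exploited in the RVT consistency computation of Section \ref{sub-1.2}, and it makes the constancy of $\#g^{-1}(\{y\})$ (cf. Remark \ref{rem-A1}) explicit along the way; your alternative remark at the end is in fact close to the paper's actual proof.
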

\begin{proof}
For a fixed $i\in\{1,\dots,\ell\},$ let us consider the middle point
$c_i:=\frac{b_{i-1}+b_{i}}{2}$ and let $\gamma_1<\gamma_2<\dots<\gamma_p$ be such that
$g^{-1}(\{c_i\})=\{\gamma_1,\dots,\gamma_p\}.$ For each $\gamma_s,$ with $s=1,\dots,p,$
let $W_s$ be an open maximal interval containing $\gamma_s$ and such that
$g(W_s)=]b_{i-1},b_{i}[.$ By Lemma \ref{lem-A1}, $g$ is strictly monotone on $W_s$
and, therefore, the open intervals $W_s$ are pairwise disjoint. In fact, if, by
contradiction, two of these intervals, say $W_s$ and $W_r,$ overlap, then
$W_s\cup W_r$ turns out to be a unique interval $W$ such that $g(W)=]b_{i-1},b_{i}[$
and hence $g$ must be strictly monotone on $W,$ contradicting the fact that
$g(\gamma_s)=g(\gamma_r)= c_i$ (with $\gamma_s\not=\gamma_r$). As a consequence of the fact that the open intervals are pairwise disjoint,
we conclude that
$$g^{-1}(\,]b_{i-1},b_{i}[\,)= W_1\cup\dots W_p$$
and then
$$\# g^{-1}(\{y\}) = \text{constant}, \quad \forall\, y\in \,]b_{i-1},b_{i}[,$$
where $\# E$ denotes the cardinality of the set $E$ (in our case it is
just the number of points, as all the involved sets are finite).
\\
As a final observation, we claim that each of the intervals $W_s$ is contained
in exactly one of the intervals $A_j.$ Indeed, let $r\in \{1,\dots,p\}$ be such
that $W_r\subset \,]\alpha_{j-1},\alpha_{j}[$ and let $s\not=r.$ If, by contradiction,
also $W_s\subset \,]\alpha_{j-1},\alpha_{j}[$, then
$\gamma_r,\gamma_s\in\,]\alpha_{j-1},\alpha_{j}[$ with $\gamma_r\not=\gamma_s$
and $g(\gamma_r)=g(\gamma_s)=c_i,$ contrary to the fact that $g$ is strictly
monotone on $A_j=[\alpha_{j-1},\alpha_{j}].$

This latter observation guarantees that $I(y)$ is constant for each
$y\in \,]b_{i-1},b_{i}[$ and the proof is complete.
\end{proof}

\begin{remark}\label{rem-A1}
It is interesting to observe that the result about the constancy of
$\# g^{-1}(\{y\})$ for all $y\in \,]b_{i-1},b_{i}[$ is a general property
which is valid also for locally invertible and proper maps on metric spaces,
as proved in \cite[\S3]{AmPr-1995}.
\end{remark}

As a next step we want to introduce a formal method to determine,
for a given $y\in [g_{\min},g_{\max}],$ the set $I(y).$ We will treat separately the case
when $y=b_i$, or
$y\in \,]b_{i-1},b_{i}[$ (for some $i\in \{1,\dots,\ell\}$). The first situation
concerns the analysis of a finite set and will be treated in a second moment.
Concerning the second case, by virtue of Lemma \ref{lem-A2}, it will be sufficient
to consider $I(c_i)$, where $c_i$ is the middle point of each interval.
It is straightforward to check that $y\in ]g(\alpha_{j-1}),g(\alpha_j)[$
(when $g(\alpha_{j-1}) < g(\alpha_j)$) or, respectively
$y\in ]g(\alpha_{j}),g(\alpha_{j-1})[$
(when $g(\alpha_{j-1}) > g(\alpha_j)$), provided that
\begin{equation}\label{eq-yglambda}
0 < (y - g(\alpha_{j-1})){\rm sign}(\lambda_{j}) < |\lambda_j|,
\end{equation}
a condition which can be easily implemented from an algorithmic point of view.
To better understand the meaning of \eqref{eq-yglambda}, recall that
${\rm sign}(\lambda_{j})$ determines whether $g$ is increasing or decreasing
on each interval $[\alpha_j,\alpha_{j+1}].$

Since the map $g$ is (in general) non-invertible, the idea now is to spread
the point $y$ on the interval $[0,S]$, which is the range of the function $\hat{g}$
and the domain of the function $\eta,$ in such a way that, at each $y\in \,]b_{i-1},b_{i}[$, we associate
a set of points
$$u_{\rho_1}(y), u_{\rho_2}(y), \dots, u_{\rho_p}(y),
\quad \text{for }\; I(y) = \{\rho_1, \rho_2,\dots, \rho_p\}.$$
More precisely, we obtain these values by setting
\begin{equation}\label{form-1}
u_j(y)= m_{j-1} + (y-g(\alpha_{j-1})){\rm sign}(\lambda_{j}),\quad
\text{for }\; j\in I(y).
\end{equation}
Then, by means of the inverse mapping $\eta$, we can obtain the set of points of the initial domain which share the same image $y$. More formally, we have that
$$x_j(y) = \eta(u_j(y))= \left(g|_{]\alpha_{j-1},\alpha_j[}\right)^{-1}(y),
\quad \text{for }\; j\in I(y).$$ which is the collection of points in the domain of $g$ that share the same image.
In conclusion, via the function $\eta$ we obtain a cumulative inverse
of all the local inverse functions of $g$ restricted to the single intervals
$A_j$, where $g$ is strictly monotone.

We summarize the procedure
described above, by recalling the previous diagrams.
We start in Figure \ref{fig-A1}
from a function $g$ where we split its
domain into a finite number of adjacent intervals where $g$ is
either strictly increasing or strictly decreasing. On the vertical
axis we have also indicated the set of points
$B=\{b_0,\dots,b_{\ell}\}$ which is the set
$\{g(\alpha_0),\dots,g(\alpha_k)\}$ with its elements arranged in
the natural order. The figure also provides a visual illustration
of Lemma \ref{lem-A2}. Indeed, it is apparent that $I(y)$ is
constant and equal to $I(c_i)$ for each $y\in ]b_{i-1},b_{i}[.$
The same figure also puts in evidence the vector
$\Lambda=(\lambda_1,\dots,\lambda_k).$

Figure \ref{fig-A2} illustrates the transformation $g\mapsto \hat{g}$
and reports the starting coordinate system $(x,y)$ and the translate
$(\hat{x},u),$ which shifts
$g(\alpha_0)$ to $0.$ The figure also puts in evidence the new special points $m_i$
on the $u$-axis.

Finally, Figure \ref{fig-A3} shows the graph of the function $\eta= \hat{g}^{-1}$.
As explained theoretically above, for any given image $y$ we have a method
to ``invert'' the function $g$ via the function $\eta$, provided that we correctly associate the set of points $u_j(y)$ for $y\in I(y)$ to $y\in [g_{\min},g_{\max}]$

It remains to discuss the case when $y\in B,$ namely when $y=b_i$
for some $i=0,\dots,\ell$. We recall that for these points, it may
exist $x_j(y)$ such that $g'(x_j(y)) = 0$. The fact that the $\#B$
is finite assures that these points are irrelevant in probability
theory as they represent a ensemble of zero measure. However, we
provide a more detailed demonstration for completeness.

We start by mentioning again that
$\# g^{-1}(\{y\})$ is constant only on
$]b_{i},b_{i+1}[$ (according to Lemma \ref{lem-A2}), but it
changes at the ends of the interval. In general, the set
$g^{-1}(\{b_i\})$ contains at least a critical point $\alpha_j$ (with
$j=1,\dots,k-1$) or an extreme point $\alpha_j$ (with
$j=0,k$) and, possibly other points in the interior of the
intervals $A_j$ where $g$ is strictly monotone. This situation
is evident from the example in Figure \ref{fig-A1}: the
$g^{-1}(\{b_3\})$ contains a regular point and a critical point of minimum;
When we increase $y$ passing across $b_3,$ we find $1-2-3$ solutions.
On the other hand, the set $g^{-1}(\{b_4\})$
contains a regular point and a critical point of maximum and then, when
we increase $y$ passing across $b_4,$ we find $3-2-1$ solutions.
This example reflects a general situation and suggests the need to
distinguish, among the critical points,the strict local minima and the
strict local maxima. This intuitive explanation is now formalized
in what follows.

We split $g^{-1}(\{b_i\})$ as
$$g^{-1}(\{b_i\})= {\mathscr C}_m^i \cup {\mathscr C}_M^i \cup {\mathscr R}^i
\cup {\mathscr E}_m^i \cup {\mathscr E}_M^i,$$
where ${\mathscr C}_m^i$ and ${\mathscr C}_M^i$ denote the sets of
critical points which are interior strict local minima or strict local
maxima, respectively, while ${\mathscr R}^i$ is the set of regular
points in the interior of the interval. We denote by ${\mathscr E}_m^i$
and ${\mathscr E}_M^i$ the sets of extreme points
$\{\alpha,\beta\}$ (having $b_i$ as image) which are,
respectively, local minima or local maxima.
Now the rule to count the number of inverse images passing
from an interval $]b_{i-1},b_{i}[$ to the next one
$]b_{i},b_{i+1}[$ across the point $b_i$ is given by:
$$\#I(c_{i+1}) =
\#I(c_{i}) + 2\#{\mathscr C}_m^i - 2\#{\mathscr C}_M^i - \#{\mathscr E}_M^i.$$
Respectively, the rule passing
from an interval $]b_{i},b_{i+1}[$ to the previous one
$]b_{i-1},b_{i}[$ across the point $b_i$ is given by:
$$\#I(c_{i}) =
\#I(c_{i+1}) - 2\#{\mathscr C}_m^i + 2\#{\mathscr C}_M^i - \#{\mathscr E}_m^i.$$
Conversely, if we know the type of the points in $g^{-1}(\{b_i\}),$
we can determine the number of elements of $I(y)$ for $y$ in an
interval having $b_i$ as an extremal point. In fact, the following
holds:
$$\#I(y)=\#g^{-1}(\{b_i\}) + \#{\mathscr C}_m^i - \#{\mathscr C}_M^i  - \#{\mathscr E}_M^i,
\quad\forall\, y\in\, ]b_i,b_{i+1}[\,.$$
To justify the above formula, we observe that if $b_i= g(\alpha_j)$
(for some $j=1,\dots,k-1$), where $\alpha_j$ is an interior point of strict local minimum,
then for $\varepsilon>0$ and sufficiently small we have exactly two points
$x_{j,\rm left}< \alpha_j < x_{j,\rm right}$ in a (small) neighborhood of $\alpha_j$
such that $g(x_{j,\rm left}) = g(x_{j,\rm right}) =y.$ If $b_i= g(\alpha_j)$
(for $j=0,k$) is the image of an extreme point which is a local minimum,
then there is exactly one point $\tilde{x}$ in a right neighborhood of $\alpha$
(respectively in a left neighborhood of $\beta$) such that $g(\tilde{x})=y.$
If $b_i= g(x^*)$ for some $x^*\in {\mathscr R}^i,$ we have that $g$ is strictly
monotone (increasing or decreasing) in an open interval containing $x^*$
and therefore there exists a unique point in such an open interval having
$y$ as image. Finally, if $b_i= g(\alpha_j)$ for some $j=0,\dots,k$ with
$\alpha_j$ a point of local maximum (in the interior or at the boundary of $[\alpha,\beta]$),
there are no solutions of $g(x)=y$ in a neighborhood of $\alpha_j.$
In this manner we have counted all the possible solutions of
$g(x)=y\in ]b_i,b_i+\varepsilon[$ for $\varepsilon>0$ and sufficiently small.
Taking into account that the number of these solutions is constant
in the open interval $]b_i,b_{i+1}[$ we have proved the above formula.

In a similar manner we prove that
$$\#I(y)=\#g^{-1}(\{b_i\}) - \#{\mathscr C}_m^i + \#{\mathscr C}_M^i - \#{\mathscr E}_m^i,
\quad\forall\, y\in\, ]b_{i-1},b_i[\,.$$

As a final step, we propose a visual description
(see Figure \ref{fig-A4}) to detect the inverse images
of a point $y\in [g_{\min},g_{\max}]$ using the components of the vector $\Lambda$
and the set $I(y).$

Our procedure consists in starting from the point $g(\alpha_0)=b_{i^*}\in B$
(for some $i^*\{1,\dots,\ell\}$)
and superimpose suitable \textit{layers} by taking $\lambda_1,\dots,\lambda_k.$
All the points $b_i$ are obtained as points of the form
$$b_{i^*}+\lambda_1=g(\alpha_1), \quad
b_{i^*}+\lambda_1 +\lambda_2=g(\alpha_2), \;\dots, \;
b_{i^*}+\sum_{j=1}^{k}\lambda_j=g(\alpha_k)=g(\beta).$$

\begin{figure}[ht]
\begin{center}
\begin{tikzpicture}[scale=0.35]
\pgfmathsetmacro{\p}{0.5}
\draw[->](-2.62,-1)--(13,-1)node[below]{$y$};
 \pgfmathsetmacro{\bu}{(0.08*(-6)^2-4*cos(deg(-6)))+5)*\p}
 \draw[dashed,gray](\bu,-1)node[below right]{$b_1$}--(\bu,15);
 \pgfmathsetmacro{\bd}{(0.08*(0.18+pi)^2-4*cos(deg(0.18+pi)))+5)*\p}
 \draw[dashed,gray](\bd,-1)node[below]{$b_2$}--(\bd,15);
 \pgfmathsetmacro{\bt}{(0.08-4*cos(deg(0)))+5)*\p}
 \draw[dashed,gray](\bt,-1)node[below]{$b_0$}--(\bt,15);
 \node at (\bt,-1)[above left]{$g_{min}$};
 \pgfmathsetmacro{\bq}{(0.08*(0.41+3*pi)^2-4*cos(deg(0.41+3*pi)))+5)*\p}
 \draw[dashed,gray](\bq,-1)node[below right]{$b_4$}--(\bq,15);
 \pgfmathsetmacro{\bc}{(0.08*(-0.48+4*pi)^2-4*cos(deg(-0.48+4*pi)))+5)*\p}
 \draw[dashed,gray](\bc,-1)node[below]{$b_3$}--(\bc,15);
 \pgfmathsetmacro{\bs}{(0.08*(14)^2-4*cos(deg(14)))+5)*\p}
 \draw[dashed,gray](\bs,-1)node[below]{$b_{\ell}$}--(\bs,15);
 \node at (\bs,-1)[above right]{$g_{max}$};
 \draw[dashed,color=gray](12.1,1)node[right]{$j=1$}--(-1.12,1);
 \draw[->,color=red,thick](2,1)--({2+\bd-\bu},1)node[above left]{$\lambda_1$};
 \draw[dashed,color=gray](12.1,3)node[right]{$j=2$}--(-1.12,3);
 \draw[->,color=blue,thick]({2+\bd-\bu},3)node[above left]{$\lambda_2$}--(\bt,3);
 \draw[dashed,color=gray](12.1,5)node[right]{$j=3$}--(-1.12,5);
 \draw[->,color=red,thick](2,5)--({2+\bd-\bu},5)node[above left]{$\lambda_3$};
 \draw[dashed,color=gray]({2+\bd-\bu},5)--(-1.12,5);
 \draw[->,color=blue,thick]({2+\bd-\bu},7)node[above left]{$\lambda_4$}--(2,7);
 \draw[dashed,color=gray](12.1,7)node[right]{$j=4$}--(-1.12,7);
 \draw[->,color=red,thick](2,9)--(\bq,9);
\node[above left,color=red] at(\bq-3,9) {$\lambda_5$};
  \draw[dashed,color=gray](12.1,9)node[right]{$j=5$}--(-1.12,9);
 \draw[->,color=blue,thick](\bq,11)node[above right]{$\lambda_6$}--(\bc,11);
 \draw[dashed,color=gray](12.1,11)node[right]{$j=6$}--(-1.12,11);
 \draw[->,color=red,thick](\bc,13)--(\bs,13)node[above left]{$\lambda_7$};
 \draw[dashed,color=gray](12.1,13)node[right]{$j=7$}--(-1.12,13);
   \draw[->](1,-6.1)--(29,-6.1)node[below]{$u$};
   \node[right,color=pink] at (17,8){$I(y_1)=\{2\}$};
   \node[right,color=green] at (17,6){$I(y_2)=\{1,2,3,4,5\}$};
   \node[right,color=yellow] at (17,4){$I(y_3)=\{5,6,7\}$};
   \draw[->,color=red,thick](2,-6.1)--({2+\bd-\bu},-6.1)node[below,color=black]{$m_1$};
   \node at ({2+\bd-\bu-0.5},-6.1) [above left,color=red]{$\lambda_1$};
   \draw[->,color=blue,thick]({2+\bd-\bu},-6.1)--({2+\bd-\bu+\bd-\bt},-6.1)node[above,color=black]{$m_2$};
   \node at ({1+\bd-\bu+\bd-\bt},-6.1) [above left,color=blue]{$\lambda_2$};
   \pgfmathsetmacro{\yu}{13.7}
   \pgfmathsetmacro{\yd}{16.62}
   \draw[->,color=red,thick]({2+\bd-\bu+\bd-\bt},-6.1)--(\yu,-6.1)node[above,color=black]{$m_3$};
   \node at (\yu-1.5,-6.1) [above left,color=red]{$\lambda_3$};
   \draw[->,color=blue,thick](\yu,-6.1)--(\yd,-6.1)node[above,color=black]{$m_4$};
   \node at (\yd-0.5,-6.1)[above left,color=blue]{$\lambda_4$};
   \pgfmathsetmacro{\yt}{22.8}
   \draw[->,color=red,thick](\yd,-6.1)--(\yt,-6.1)node[above left,color=black]{$m_5$};
   \node at (\yt-2,-6.1)[above left,color=red]{$\lambda_5$};
   \pgfmathsetmacro{\yq}{24.45}
   \draw[->,color=blue,thick](\yt,-6.1)--(\yq,-6.1)node[above,color=black]{$m_6$};
   \node at (\yq-0.2,-6.1)[above left,color=blue]{$\lambda_6$};
   \pgfmathsetmacro{\yc}{27.9}
   \draw[->,color=red,thick](\yq,-6.1)--(\yc,-6.1)node[above,color=black]{$S_{\Lambda}$};
   \node at (\yc-1,-6.1)[above left,color=red]{$\lambda_7$};
    \draw[color=pink](\bd-3.5,-1)node{$\bullet$}--(\bd-3.5,15);
    \node[below,color=pink] at (\bd-3.5,-1){$y_1$};
   \node[color=pink] at ({2+\bd-\bu+3},-6.1){$\bullet$};
   \node[color=pink,below] at ({2+\bd-\bu+3},-6.1){$u_2(y_1)$};

   \draw[color=green](\bu+1,-1)node{$\bullet$}--(\bu+1,15);
    \node[below right,color=green] at (\bu+1,-1){$y_2$};
   \node[color=green] at ({2+1},-6.1){$\bullet$};
   \node[color=green,below] at ({2+1},-6.1){$u_1(y_2)$};
    \node[color=green] at (2+\bd-\bu+\bd-\bu-1,-6.1){$\bullet$};
    \node[color=green,above left] at (2+\bd-\bu+\bd-\bu-1,-6.1){$u_2(y_2)$};
   \node[color=green] at ({2+\bd-\bu+\bd-\bt+1},-6.1){$\bullet$};
   \node[color=green,below] at ({2+\bd-\bu+\bd-\bt+1},-6.1){$u_3(y_2)$};
    \node[color=green] at (\yd+1,-6.1){$\bullet$};
     \node[color=green,below] at (\yd+1,-6.1){$u_5(y_2)$};
    \node[color=green] at (\yu+\bd-\bu-1,-6.1){$\bullet$};
     \node[color=green,below left] at (\yu+\bd-\bu-1,-6.1){$u_4(y_2)$};

     \draw[color=yellow](\bc+1,-1)node{$\bullet$}--(\bc+1,15);
      \node[below ,color=yellow] at (\bc+1,-1){$y_3$};
    \node[color=yellow] at (\yd+1+\bc-\bu,-6.1){$\bullet$};
    \node[color=yellow,below left] at (\yd+1+\bc-\bu,-6.1){$u_5(y_3)$};
    \node[color=yellow] at (\yt-\bc+\bq-1,-6.1){$\bullet$};
     \node[color=yellow,below] at (\yt-\bc+\bq-1,-6.1){$u_6(y_3)$};
    \node[color=yellow] at (\yq+1,-6.1){$\bullet$};
     \node[color=yellow,below right] at (\yq+1,-6.1){$u_7(y_3)$};
\end{tikzpicture}
\end{center}
\caption{The layers.} \label{fig-A4}
\end{figure}
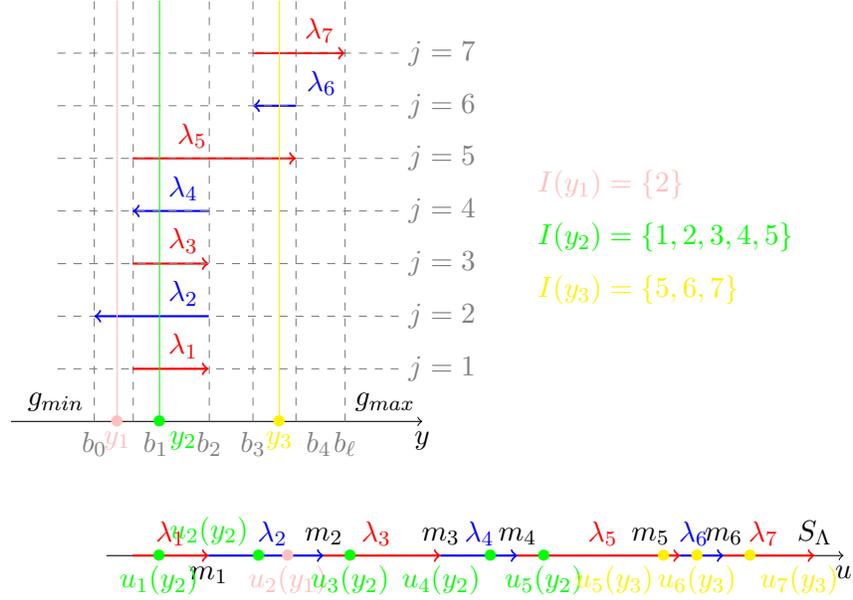

Once the points $g(\alpha_j)$ are arranged in the natural order for the set $B,$
as $\{b_1,\dots,b_{\ell}\},$ for any fixed $\bar{y}\in \,]b_{i-1},b_{i}[\,,$
in order to determine the set $I(y),$ we have just to look for
the number of layers meeting the vertical line $y=\bar{y}$
(as observed above in Lemma \ref{lem-A2}, we can take $\bar{y}= c_i$).
The corresponding scheme is illustrated in Figure \ref{fig-A4} below.
For the figure we take as a function $g,$ the one reported in Figure \ref{fig-A1}.

\begin{remark}\label{rem-A2}
In the sequel we will apply the method described above to maps obtained
from a class of random dynamical systems. In this case, the function $g$
comes from a map $\varphi(x,t),$ for a fixed value of the time $t.$
The derivative of the auxiliary function $\eta$ will determine the density
of a composite RV. From a topological point of view, the evolution of
$\varphi(x,t),$ as $t$ varies in a time-interval, can be interpreted as
sequence od functions $g_t(x)$ which transform (fold) the RV $X$ (with values
in $[\alpha,\beta]$) and mass probability function $\mu_X$ into a new
RV $Y_t=g_t(X).$ In this perspective, it is natural to consider
the above method as a technique which at any time $t$ ``unfolds'' the range
$[g_{\min},g_{\max}]$ of $g$ to the new range $[0,S_{\lambda}]$ which
becomes the new domain of the function $\eta,$ as shown in the example of Figure
\ref{fig-A4}.
\end{remark}

\subsection{The connection with the theory of Random Variable Transformation}\label{sub-1.2}
Let $X$ be a RV with range in a compact interval $[\alpha,\beta]$ and let
$g:[\alpha,\beta]\to [g_{\min},g_{\max}]$ be a smooth and piecewise monotone function as it is the one constructed
in Section \ref{sub-1.1}. Our goal is to show that the FDF is consistent with the general theory which provides the probability distribution function of the new RV $Y=g(X).$

Using the same notation as in the previous section, given any $\tilde{y}\in ]b_{i-1},b_{i}[$
and a sufficiently small $\varepsilon > 0$ such that
$$]\tilde{y}-\varepsilon,\tilde{y}+\varepsilon[\subset  ]b_{i-1},b_{i}[,$$ according to Lemma \ref{lem-A2} we have that
the set of indices $I(y)$, representing the
layers associated with $y$, is constant for all
$y\in ]\tilde{y}-\varepsilon,\tilde{y}+\varepsilon[.$ Thus,
\begin{eqnarray*}
P(\tilde{y}-\varepsilon < Y < \tilde{y}+\varepsilon)
&=&
P\bigl(X\in g^{-1}(\,]\tilde{y}-\varepsilon,\tilde{y}+\varepsilon[\,)\bigr)\\
&=&
\sum_{j\in I(\tilde{y})}
P\bigl(X\in (g|_{A_j})^{-1}(\,]\tilde{y}-\varepsilon,\tilde{y}+\varepsilon[\,)\bigr)\\
&=&
\sum_{j\in I(\tilde{y})}
P\bigl(X\in \eta\bigl(\,]u_{j,{\min}}(\tilde{y}),u_{j,{\max}(\tilde{y})}[\,
\bigr)\bigr)\\
&\phantom{=}&
u_{j,{\min}}(\tilde{y}):=\min\{u_j(\tilde{y}-\varepsilon),u_j(\tilde{y}+\varepsilon)\},\\
&\phantom{=}&
u_{j,{\max}}(\tilde{y}):=\max\{u_j(\tilde{y}-\varepsilon),u_j(\tilde{y}+\varepsilon)\}\\
&=&
\sum_{j\in I(\tilde{y})}
\int_{\min\{u_j(\tilde{y}-\varepsilon),u_j(\tilde{y}+\varepsilon)\}}
^{\max\{u_j(\tilde{y}-\varepsilon),u_j(\tilde{y}+\varepsilon)\}}
\mu_{X}(\eta(u))\eta'(u)du\\
{(*)}&=&
\sum_{j\in I(\tilde{y})}
\int_{\tilde{y}-\varepsilon}
^{\tilde{y}+\varepsilon}
\mu_{X}(\eta(u(\xi)))\eta'(u(\xi))\left|\frac{du(\xi)}{d\xi}\right| d\xi\\
{(*)}&=&
\sum_{j\in I(\tilde{y})}
\int_{\tilde{y}-\varepsilon}
^{\tilde{y}+\varepsilon}
\mu_{X}((g|_{A_j})^{-1}(\xi))\left|\frac{d}{d\xi}(g|_{A_j})^{-1}(\xi))\right| d\xi.
\end{eqnarray*}
where, in this equation $g^{-1}$ does not strictly stand for the inverse of $g$, which generally speaking is not invertible, but it represents the collection of pre-images of the segment $]\tilde{y}-\varepsilon,\tilde{y}+\varepsilon[$.
In contrast, $(g|_{A_j})^{-1}$ represents the actual local inverse of the segment for the set $A_j$. Moreover, in the last two steps (indicated by $(*)$),
we have used the fact that
$$\eta'(u(y))\left|\frac{du(y)}{dy}\right|= \left|\frac{d}{dy}(g|_{A_j})^{-1}(y)\right|$$
and also that in fact, $\frac{du(y)}{dy}=\pm 1.$

On the other hand, by definition,
$$P(\tilde{y}-\varepsilon< Y \leq \tilde{y}+\varepsilon)
=\int_{\tilde{y}-\varepsilon}^{\tilde{y}+\varepsilon} \mu_Y(\xi)d\xi.$$
Therefore, comparing the two expressions, we obtain that
\begin{equation}\label{eq-1.6}
\mu_Y(y) = \sum_{j\in I(y)}
\mu_{X}((g|_{A_j})^{-1}(y))\left|\frac{d}{dy}(g|_{A_j})^{-1}(y))\right|,
\end{equation}
holds for all $y\in [g_{\min},g_{\max}]\setminus B$. In this
manner, we have reestablished \eqref{eq-1.1}, for
$h_i:=(g|_{A_j})^{-1}$ indicating that the FDF method is
consistent with the general theory for the RVT. Eq. \eqref{eq-1.6}
also holds for the segment end points $y=g(\alpha_0)=g(\alpha)$
(respectively, for $y=g(\alpha_k)=g(\beta)$) provided that they
are not a \textit{critical value}. The formula is not applicable
to the critical values $b_0,\dots,b_{\ell}$ because for these
points the derivative of the inverse function is infinite. Notice
that such a limitation is also present in the general RVT theory
as expressed in \eqref{eq-1.1}. However, in our situation, as the
number of critical values is finite the ensemble of critical
points of $g$ in the interval $[\alpha,\beta]$ is thus a set of
zero measure. Therefore, the weight of the critical values is
negligible in the computation of the cumulative distribution
function $F_Y$, provided that $X$ is a continuous RV or a
discrete/mixed one with a finite number of jumps at points which
are not critical. Finally, in the very special case in which there
are jumps for the RV $X$ at some critical points of $g,$ the
problem will be solved by a direct computation as a limit from the
neighboring compact environment.

\section{An example}\label{section-3}
In order to demonstrate the applicability of the method, we start with a
concrete example. Let us assume that our RDE admits an
analytical solution $y(t):=\varphi(\phi,t)$. For example,
recalling that the RDE Eq. \eqref{eq-1.4} leads to solutions of the form \eqref{eq-1.5},
we assume that our physical system evolves according to:
\begin{equation}\label{eqINTRO1}
\varphi(\phi,t):=k\cdot\phi+A\cdot\cos(\omega t+\phi),
\end{equation}
Where $k>0$ is a given constant, $A>0$ is the \emph{amplitude},
$\omega>0$ is the \emph{angular frequency}, and $\phi$ is the
\emph{initial phase}. Furthermore we
assume that $\phi=X$ is a random number that belongs to
the interval $[\alpha,\beta]$. We suppose that $\phi$ has mass
probability function:
\begin{equation}\label{eqINTRO2}
\mu_X:[\alpha,\beta]\subset\mathbb{R}\to\mathbb{R}.
\end{equation}
In each fixed time instant
$t=t^{*}>0$ the physical system is in the position
$y(t^*)=\varphi(\phi,t^{*})$ and such a position depends
on the particular initial phase $\phi\in[\alpha,\beta]$.
We notice that, from the mathematical point of view,
$y=\varphi(\phi,t)$ is a scalar field
$\varphi:[\alpha,\beta]\times[0,t_{max}]\subset\mathbb{R}^2\to\mathbb{R}$
that associates to each element $(\phi,t)$ a point $y$ in the real physical space.
Our idea is that, in each instant of time $t^{*}\in[0,t_{max}]$,
we can get the probability mass function $\mu_Y:=\psi(y,t^{*})$ for the variable
$y(t^{*})$ by applying the new formal concept introduced above.
We start by replacing the physical map given by Eq. \eqref{eqINTRO1} by the FDF, motivated by
the theoretical results of the previous section.
In Figure \ref{fig1} we report the plot of the functions \eqref{eqINTRO1}, corresponding to the physical solution in the $(\phi,t)$ plane (yelow), together with \eqref{eqINTRO2}, giving the probability distribution of the random variable $X$. On the right-hand side of the figure we have singled out three instants of time $0<t^*_1<t^*_2<t^*_3<t_{max}$, to show that the problem depends on the final time chosen for the analysis.
In each of these times, we plot a function
$g_{t^*_i}:[\alpha,\beta]\subset{\mathbb{R}}\to\mathbb{R}$
($i\in\{1,2,3\}$), such that:
\begin{align*}
g_{t^*_1}(\phi):=&\varphi(\phi,t^*_1),\\
g_{t^*_2}(\phi):=&\varphi(\phi,t^*_2),\\
g_{t^*_3}(\phi):=&\varphi(\phi,t^*_3).
\end{align*}
We notice that each of these three functions maps the interval
$[\alpha,\beta]$ on the $x=\phi$-axis in a non-bijective way to
the $y$-axis. As explained, we proceed to the separation of
$g_{t^*_i}(\phi)$ into all the monotone pieces as we vary $\phi$
from $\alpha$ to $\beta$, as we described. Second, we identify the
increasing from the decreasing pieces to construct
$\hat{g}_{t^*_i}(\phi)$. As each of these pieces are invertible,
we can calculate the transformation of $\mu_X$ into the  {\em
unfolded intermediate} $\tilde{\mu}^t_Y$, with $t=t^*_1, t^*_2,
t^*_3$. Then, following the indicated procedure, we construct the
final folded $\mu_Y$. The key element is to realizing that the
{\em folding} process will introduce the overlap of the different
sections. In Figure \ref{fig1} we show precisely the overlapping
pieces that will give rise to the final $\mu_Y$.

In more detail, for each $t=t^*_1, t^*_2, t^*_3$
we have a different vector $\Lambda=\Lambda_t$ which collects all the
needed set of information to construct the functions $\hat{g},$ $\eta$ and the
associated layers. For $t=t^*_1$, in Figure \ref{fig1}
the first arrow is a blue arrow that corresponds to the first
decreasing part of $g_{t^*_1}$. The second is a red arrow that corresponds
the increasing part of $g_{t^*_1}$ and the last one is another blue arrow that
corresponds to the also decreasing part of $g_{t^*_1}$.
We proceeded similarly for $g_{t^*_2}$ and $g_{t^*_3}$.
Observe that, as the time varies, the corresponding $\Lambda_t$ changes and, therefore,
the size and direction of the {\em arrows}, as well.

\begin{figure}[ht]
\begin{center}
\tdplotsetmaincoords{40}{20} 
\begin{tikzpicture}[tdplot_main_coords,scale=0.7]

   \draw[thick,->] (0,0,0)node[left]{$t_{min}$} -- (0,7,0) node[anchor=north west]{$t$};

   \pgfmathsetmacro{\o}{1}
      \pgfmathsetmacro{\A}{1}

 \draw[dashed](1,0,0)node[below]{$\alpha$}--(1,5.2,0);
  \draw[dashed](6.2,0,0)node[below]{$\beta$}--(6.2,5.2,0);

 \draw[dashed](0,5.2,0)node[above left]{$t_{max}$}--(6.2,5.2,0);
 \draw[dashed](6.2,5.2,0)--(6.2,5.2,{6.2*0.41+\A*cos(deg(1.3*(\o*5.2+6.2)))})node[right,color=brown]{$\varphi(\phi,t)$};

\draw[dashed](6.2,4,0)--(6.2,4,{6.2*0.41+\A*cos(deg(1.3*(\o*4+6.2)))});
  \draw[dashed](1,4,0)--(6.2,4,0);

 \foreach \a in {1,1.2,1.4,1.6,1.8,2,2.2,2.4,2.6,2.8,3,3.2,3.4,3.6,3.8,4,4.2,4.4,4.6,4.8,5,5.2,5.4,5.6,5.8,6}
 {
  \foreach \t in {4,4.2,4.4,4.6,4.8,5}
  {
  \path[fill=yellow,opacity=0.89] (\a,\t,{\a*0.41+\A*cos(deg(1.3*(\o*\t+\a)))})--(\a,\t+0.2,{\a*0.41+\A*cos(deg(1.3*(\o*(\t+0.2)+\a)))})--(\a+0.2,\t+0.2,{(\a+0.2)*0.41+\A*cos(deg(1.3*(\o*(\t+0.2)+\a+0.2)))})--(\a+0.2,\t,{(\a+0.2)*0.41+\A*cos(deg(1.3*(\o*\t+\a+0.2)))})--(\a,\t,{\a*0.41+\A*cos(deg(1.3*(\o*\t+\a)))});
   \draw[color=brown] (\a,\t,{\a*0.41+\A*cos(deg(1.3*(\o*\t+\a)))})--(\a,\t+0.2,{\a*0.41+\A*cos(deg(1.3*(\o*(\t+0.2)+\a)))});
  }
  }

    \foreach \t in {4,4.2,4.4,4.6,4.8,5,5.2}
 {
 \foreach \a in {1,1.2,1.4,1.6,1.8,2,2.2,2.4,2.6,2.8,3,3.2,3.4,3.6,3.8,4,4.2,4.4,4.6,4.8,5,5.2,5.4,5.6,5.8,6}
  {
  \draw[color=brown] (\a,\t,{\a*0.41+\A*cos(deg(1.3*(\o*\t+\a)))})--(\a+0.2,\t,{(\a+0.2)*0.41+\A*cos(deg(1.3*(\o*\t+\a+0.2)))});
  }
  }

    \foreach \t in {4,4.2,4.4,4.6,4.8,5}
  {
  \draw[color=brown] (6.2,\t,{6.2*0.41+\A*cos(deg(1.3*(\o*(\t)+6.2)))})--(6.2,\t+0.2,{6.2*0.41+\A*cos(deg(1.3*(\o*(\t+0.2)+6.2))});
  }

   \path[fill=cyan,opacity=0.21](0,4,-1)--(13,4,-1)--(13,4,3.5)--(0,4,3.5)--(0,4,-1);
   \draw[dashed,color=gray](11.5,4,-1)--(11.5,4,3.5);
   \draw[dashed,color=gray](12.25,4,-1)--(12.25,4,3.5);
    \draw[dashed,color=gray](13,4,-1)--(13,4,3.5);
        \draw[color=blue,->, thick](11.5,4,{0.41+\A*cos(deg(1.3*(\o*4+1)))})--(11.5,4,{3*0.41+\A*cos(deg(1.3*(\o*4+3)))});
        \draw[color=red,->, thick](12.25,4,{3*0.41+\A*cos(deg(1.3*(\o*4+3)))})--(12.25,4,{5.8*0.41+\A*cos(deg(1.3*(\o*4+5.8)))});
        \draw[color=blue,->, thick](13,4,{5.8*0.41+\A*cos(deg(1.3*(\o*4+5.8)))})--(13,4,{6.2*0.41+\A*cos(deg(1.3*(\o*4+6.2)))});

   \node[above,color=black] at (6.2,4,{6.2*0.41+\A*cos(deg(1.3*(\o*4+6.2)))}){$g_{t^{*}_3}(\phi)$};

  \draw[dashed,color=gray](0,4,0)node[above left]{$t^{*}_3$}--(3.2,4,0);

 \foreach \a in {1,1.2,1.4,1.6,1.8,2,2.2,2.4,2.6,2.8,3,3.2,3.4,3.6,3.8,4,4.2,4.4,4.6,4.8,5,5.2,5.4,5.6,5.8,6}
 {
  \foreach \t in {2.4,2.6,2.8,3,3.2,3.4,3.6,3.8}
  {

  \path[fill=yellow,opacity=0.89] (\a,\t,{\a*0.41+\A*cos(deg(1.3*(\o*\t+\a)))})--(\a,\t+0.2,{\a*0.41+\A*cos(deg(1.3*(\o*(\t+0.2)+\a)))})--(\a+0.2,\t+0.2,{(\a+0.2)*0.41+\A*cos(deg(1.3*(\o*(\t+0.2)+\a+0.2)))})--(\a+0.2,\t,{(\a+0.2)*0.41+\A*cos(deg(1.3*(\o*\t+\a+0.2)))})--(\a,\t,{\a*0.41+\A*cos(deg(1.3*(\o*\t+\a)))});
   \draw[color=brown] (\a,\t,{\a*0.41+\A*cos(deg(1.3*(\o*\t+\a)))})--(\a,\t+0.2,{\a*0.41+\A*cos(deg(1.3*(\o*(\t+0.2)+\a)))});
  }
  }

 \foreach \t in {2.4,2.6,2.8,3,3.2,3.4,3.6,3.8}
 {
 \foreach \a in {1,1.2,1.4,1.6,1.8,2,2.2,2.4,2.6,2.8,3,3.2,3.4,3.6,3.8,4,4.2,4.4,4.6,4.8,5,5.2,5.4,5.6,5.8,6}
  {
  \draw[color=brown] (\a,\t,{\a*0.41+\A*cos(deg(1.3*(\o*\t+\a)))})--(\a+0.2,\t,{(\a+0.2)*0.41+\A*cos(deg(1.3*(\o*\t+\a+0.2)))});
  }
  }

  \foreach \t in {4}
 {
 \foreach \a in {1,1.2,1.4,1.6,1.8,2,2.2,2.4,2.6,2.8}
  {
  \draw[color=blue,thick] (\a,\t,{\a*0.41+\A*cos(deg(1.3*(\o*\t+\a)))})--(\a+0.2,\t,{(\a+0.2)*0.41+\A*cos(deg(1.3*(\o*\t+\a+0.2)))});
  }
  \foreach \a in {3,3.2,3.4,3.6,3.8,4,4.2,4.4,4.6,4.8,5,5.2,5.4,5.6}
  {
  \draw[color=red,thick] (\a,\t,{\a*0.41+\A*cos(deg(1.3*(\o*\t+\a)))})--(\a+0.2,\t,{(\a+0.2)*0.41+\A*cos(deg(1.3*(\o*\t+\a+0.2)))});
  }
  \foreach \a in {5.8,6}
  {
  \draw[color=blue,thick] (\a,\t,{\a*0.41+\A*cos(deg(1.3*(\o*\t+\a)))})--(\a+0.2,\t,{(\a+0.2)*0.41+\A*cos(deg(1.3*(\o*\t+\a+0.2)))});
  }
  }

  \foreach \t in {2.4,2.6,2.6,2.8,3,3.2,3.4,3.6,3.8}
  {
  \draw[color=brown] (6.2,\t,{6.2*0.41+\A*cos(deg(1.3*(\o*(\t)+6.2)))})--(6.2,\t+0.2,{6.2*0.41+\A*cos(deg(1.3*(\o*(\t+0.2)+6.2))});
  }

  \path[fill=cyan,opacity=0.21](0,2.4,-1)--(10.75,2.4,-1)--(10.75,2.4,2.7)--(0,2.4,2.7)--(0,2.4,-1);
  \draw[dashed,color=gray](9.25,2.4,-1)--(9.25,2.4,2.7);
   \draw[dashed,color=gray](10,2.4,-1)--(10,2.4,2.7);
    \draw[dashed,color=gray](10.75,2.4,-1)--(10.75,2.4,2.7);
         \draw[color=red,->,thick](9.25,2.4,{0.41+\A*cos(deg(1.3*(\o*(2.4)+1)))})--(9.25,2.4,{2.6*0.41+\A*cos(deg(1.3*(\o*(2.4)+2.6)))});
        \draw[color=blue,->,thick](10,2.4,{2.6*0.41+\A*cos(deg(1.3*(\o*(2.4)+2.6)))})--(10,2.4,{4.6*0.41+\A*cos(deg(1.3*(\o*(2.4)+4.6)))});
        \draw[color=red,->,thick](10.75,2.4,{4.6*0.41+\A*cos(deg(1.3*(\o*(2.4)+4.6)))})--(10.75,2.4,{6.2*0.41+\A*cos(deg(1.3*(\o*(2.4)+6.2)))});


   \draw[dashed,color=gray](0,2.4,0)node[above left]{$t^{*}_2$}--(6.2,2.4,0);
 \draw[dashed,color=gray](6.2,2.4,0)--(6.2,2.4,{6.2*0.41+\A*cos(deg(1.3*(\o*2.4+6.2)))})node[above,color=black]{$g_{t^{*}_2}(\phi)$};

 \foreach \a in {1,1.2,1.4,1.6,1.8,2,2.2,2.4,2.6,2.8,3,3.2,3.4,3.6,3.8,4,4.2,4.4,4.6,4.8,5,5.2,5.4,5.6,5.8,6}
 {
  \foreach \t in {0.4,0.6,0.8,1,1.2,1.4,1.6,1.8,2,2.2}
  {

  \path[fill=yellow,opacity=0.89] (\a,\t,{\a*0.41+\A*cos(deg(1.3*(\o*\t+\a)))})--(\a,\t+0.2,{\a*0.41+\A*cos(deg(1.3*(\o*(\t+0.2)+\a)))})--(\a+0.2,\t+0.2,{(\a+0.2)*0.41+\A*cos(deg(1.3*(\o*(\t+0.2)+\a+0.2)))})--(\a+0.2,\t,{(\a+0.2)*0.41+\A*cos(deg(1.3*(\o*\t+\a+0.2)))})--(\a,\t,{\a*0.41+\A*cos(deg(1.3*(\o*\t+\a)))});
   \draw[color=brown] (\a,\t,{\a*0.41+\A*cos(deg(1.3*(\o*\t+\a)))})--(\a,\t+0.2,{\a*0.41+\A*cos(deg(1.3*(\o*(\t+0.2)+\a)))});
  }
  }

   \foreach \t in {0.4,0.6,0.8,1,1.2,1.4,1.6,1.8,2,2.2}
 {
 \foreach \a in {1,1.2,1.4,1.6,1.8,2,2.2,2.4,2.6,2.8,3,3.2,3.4,3.6,3.8,4,4.2,4.4,4.6,4.8,5,5.2,5.4,5.6,5.8,6}
  {
  \draw[color=brown] (\a,\t,{\a*0.41+\A*cos(deg(1.3*(\o*\t+\a)))})--(\a+0.2,\t,{(\a+0.2)*0.41+\A*cos(deg(1.3*(\o*\t+\a+0.2)))});
  }
  }

   \foreach \t in {2.4}
 {
 \foreach \a in {1,1.2,1.4,1.6,1.8,2,2.2,2.4}
  {
  \draw[color=red,thick] (\a,\t,{\a*0.41+\A*cos(deg(1.3*(\o*\t+\a)))})--(\a+0.2,\t,{(\a+0.2)*0.41+\A*cos(deg(1.3*(\o*\t+\a+0.2)))});
  }
  \foreach \a in {2.6,2.8,3,3.2,3.4,3.6,3.8,4,4.2,4.4}
  {
  \draw[color=blue,thick] (\a,\t,{\a*0.41+\A*cos(deg(1.3*(\o*\t+\a)))})--(\a+0.2,\t,{(\a+0.2)*0.41+\A*cos(deg(1.3*(\o*\t+\a+0.2)))});
  }
  \foreach \a in {4.6,4.8,5,5.2,5.4,5.6,5.8,6}
  {
  \draw[color=red,thick] (\a,\t,{\a*0.41+\A*cos(deg(1.3*(\o*\t+\a)))})--(\a+0.2,\t,{(\a+0.2)*0.41+\A*cos(deg(1.3*(\o*\t+\a+0.2)))});
  }
  }

  \foreach \t in {0.4,0.6,0.8,1,1.2,1.4,1.6,1.8,2,2.2}
  {
  \draw[color=brown] (6.2,\t,{6.2*0.41+\A*cos(deg(1.3*(\o*(\t)+6.2)))})--(6.2,\t+0.2,{6.2*0.41+\A*cos(deg(1.3*(\o*(\t+0.2)+6.2))});
  }

     \path[fill=cyan,opacity=0.21](0,0.4,-1)--(8.5,0.4,-1)--(8.5,0.4,2.9)--(0,0.4,2.9)--(0,0.4,-1);
  \draw[dashed,color=gray](7,0.4,-1)--(7,0.4,2.9);
   \draw[dashed,color=gray](7.75,0.4,-1)--(7.75,0.4,2.9);
    \draw[dashed,color=gray](8.5,0.4,-1)--(8.5,0.4,2.9);
    \draw[color=blue,->,thick](7,0.4,{0.41+\A*cos(deg(1.3*(\o*(0.4)+1)))})--(7,0.4,{1.8*0.41+\A*cos(deg(1.3*(\o*(0.4)+1.8)))});
        \draw[color=red,->,thick](7.75,0.4,{1.8*0.41+\A*cos(deg(1.3*(\o*(0.4)+1.8)))})--(7.75,0.4,{4.6*0.41+\A*cos(deg(1.3*(\o*(0.4)+4.6)))});
        \draw[color=blue,->,thick](8.5,0.4,{4.6*0.41+\A*cos(deg(1.3*(\o*(0.4)+4.6)))})--(8.5,0.4,{6.2*0.41+\A*cos(deg(1.3*(\o*(0.4)+6.2)))});

 \draw[dashed,color=gray](0,0.4,0)node[above]{$t^{*}_1$}--(6.2,0.4,0);
 \draw[dashed,color=gray](6.2,0.4,0)--(6.2,0.4,{6.2*0.41+\A*cos(deg(1.3*(\o*0.4+6.2)))})node[above right,color=black]{$g_{t^{*}_1}(\phi)$};

 \foreach \a in {1,1.2,1.4,1.6,1.8,2,2.2,2.4,2.6,2.8,3,3.2,3.4,3.6,3.8,4,4.2,4.4,4.6,4.8,5,5.2,5.4,5.6,5.8,6}
 {
  \foreach \t in {0,0.2}
  {

  \path[fill=yellow,opacity=0.89] (\a,\t,{\a*0.41+\A*cos(deg(1.3*(\o*\t+\a)))})--(\a,\t+0.2,{\a*0.41+\A*cos(deg(1.3*(\o*(\t+0.2)+\a)))})--(\a+0.2,\t+0.2,{(\a+0.2)*0.41+\A*cos(deg(1.3*(\o*(\t+0.2)+\a+0.2)))})--(\a+0.2,\t,{(\a+0.2)*0.41+\A*cos(deg(1.3*(\o*\t+\a+0.2)))})--(\a,\t,{\a*0.41+\A*cos(deg(1.3*(\o*\t+\a)))});
   \draw[color=brown] (\a,\t,{\a*0.41+\A*cos(deg(1.3*(\o*\t+\a)))})--(\a,\t+0.2,{\a*0.41+\A*cos(deg(1.3*(\o*(\t+0.2)+\a)))});
  }
  }

  \foreach \t in {0,0.2}
 {
 \foreach \a in {1,1.2,1.4,1.6,1.8,2,2.2,2.4,2.6,2.8,3,3.2,3.4,3.6,3.8,4,4.2,4.4,4.6,4.8,5,5.2,5.4,5.6,5.8,6}
  {
  \draw[color=brown] (\a,\t,{\a*0.41+\A*cos(deg(1.3*(\o*\t+\a)))})--(\a+0.2,\t,{(\a+0.2)*0.41+\A*cos(deg(1.3*(\o*\t+\a+0.2)))});
  }
  }

  \foreach \t in {0.4}
 {
 \foreach \a in {1,1.2,1.4,1.6}
  {
  \draw[color=blue,thick] (\a,\t,{\a*0.41+\A*cos(deg(1.3*(\o*\t+\a)))})--(\a+0.2,\t,{(\a+0.2)*0.41+\A*cos(deg(1.3*(\o*\t+\a+0.2)))});
  }

 \foreach \a in {1.8,2,2.2,2.4,2.6,2.8,3,3.2,3.4,3.6,3.8,4,4.2,4.4}
  {
  \draw[color=red,thick] (\a,\t,{\a*0.41+\A*cos(deg(1.3*(\o*\t+\a)))})--(\a+0.2,\t,{(\a+0.2)*0.41+\A*cos(deg(1.3*(\o*\t+\a+0.2)))});
  }
   \foreach \a in {4.6,4.8,5,5.2,5.4,5.6,5.8,6}
  {
  \draw[color=blue,thick] (\a,\t,{\a*0.41+\A*cos(deg(1.3*(\o*\t+\a)))})--(\a+0.2,\t,{(\a+0.2)*0.41+\A*cos(deg(1.3*(\o*\t+\a+0.2)))});
  }
  }

  \foreach \t in {0,0.2}
  {
  \draw[color=brown] (6.2,\t,{6.2*0.41+\A*cos(deg(1.3*(\o*(\t)+6.2)))})--(6.2,\t+0.2,{6.2*0.41+\A*cos(deg(1.3*(\o*(\t+0.2)+6.2))});
  }

       \draw[thick,->] (0,0,0) -- (6.5,0,0) node[below]{$\phi$};

    \draw[thick,->] (0,0,0) -- (0,0,4) node[anchor=south]{$y$};


        \draw[thick,->] (0,-2,0) -- (6.5,0-2,0) node[anchor=south]{$\phi$};

%

 \pgfmathsetmacro{\pu}{2}
   \pgfmathsetmacro{\pd}{-3}
      \pgfmathsetmacro{\pt}{0.55}

      \pgfmathsetmacro{\fa}{2}



     \foreach \a in {1,1.2,1.4,1.6,1.8,2,2.2,2.4,2.6,2.8,3,3.2,3.4,3.6,3.8,4,4.2,4.4,4.6,4.8,5,5.2,5.4,5.6,5.8,6}
 {
  \path[fill=green,opacity=0.41] (\a,-2,0)--(\a,{\pd-\pt*cos(deg(\a*\pu+\fa))},0)--(\a+0.2,{\pd-\pt*cos(deg((\a+0.2)*\pu+\fa))},0)--(\a+0.2,-2,0)--(\a,-2,0);
   \draw[color=gray] (\a,{\pd-\pt*cos(deg(\a*\pu+\fa))},0)--(\a+0.2,{\pd-\pt*cos(deg((\a+0.2)*\pu+\fa))},0);
  }

  \node[color=gray,right] at (6.2,{\pd-\pt*cos(deg(6.2*\pu))},0){$\mu_X(\phi)$};


\pgfmathsetmacro{\t}{0.4}

  \foreach \a in {1,1.1,1.2,1.3,1.4,1.5,1.6,1.7,1.8}
 {
 \path[fill=green,opacity=0.41](7,\t,{\a*0.41+\A*cos(deg(1.3*(\o*\t+\a)))})--(7,{\t-1-\pt*cos(deg(\a*\pu+\fa))},{\a*0.41+\A*cos(deg(1.3*(\o*\t+\a)))})--(7,{\t-1-\pt*cos(deg((\a+0.1)*\pu+\fa))},{(\a+0.1)*0.41+\A*cos(deg(1.3*(\o*\t+\a+0.1)))})--(7,\t,{(\a+0.1)*0.41+\A*cos(deg(1.3*(\o*\t+\a+0.1)))})--(7,\t,{\a*0.41+\A*cos(deg(1.3*(\o*\t+\a)))});
  \draw[color=purple,thick](7,{\t-1-\pt*cos(deg(\a*\pu+\fa))},{\a*0.41+\A*cos(deg(1.3*(\o*\t+\a)))})--(7,{\t-1-\pt*cos(deg((\a+0.1)*\pu+\fa))},{(\a+0.1)*0.41+\A*cos(deg(1.3*(\o*\t+\a+0.1)))});
  }

  \foreach \a in {1.8,1.9,2,2.1,2.2,2.3,2.4,2.5,2.6,2.7,2.8,2.9,3,3.1,3.2,3.3,3.4,3.5,3.6,3.7,3.8,3.9,4,4.1,4.2,4.3,4.4}
 {
 \path[fill=green,opacity=0.41](7.75,\t,{\a*0.41+\A*cos(deg(1.3*(\o*\t+\a)))})--(7.75,{\t-1-\pt*cos(deg(\a*\pu+\fa))},{\a*0.41+\A*cos(deg(1.3*(\o*\t+\a)))})--(7.75,{\t-1-\pt*cos(deg((\a+0.1)*\pu+\fa))},{(\a+0.1)*0.41+\A*cos(deg(1.3*(\o*\t+\a+0.1)))})--(7.75,\t,{(\a+0.1)*0.41+\A*cos(deg(1.3*(\o*\t+\a+0.1)))})--(7.75,\t,{\a*0.41+\A*cos(deg(1.3*(\o*\t+\a)))});
  \draw[color=purple,thick](7.75,{\t-1-\pt*cos(deg(\a*\pu+\fa))},{\a*0.41+\A*cos(deg(1.3*(\o*\t+\a)))})--(7.75,{\t-1-\pt*cos(deg((\a+0.1)*\pu+\fa))},{(\a+0.1)*0.41+\A*cos(deg(1.3*(\o*\t+\a+0.1)))});
  }
  \foreach \a in {4.4,4.5,4.6,4.7,4.8,4.9,5,5.1,5.2,5.3,5.4,5.5,5.6,5.7,5.8,5.9,6}
 {
 \path[fill=green,opacity=0.41](8.5,\t,{\a*0.41+\A*cos(deg(1.3*(\o*\t+\a)))})--(8.5,{\t-1-\pt*cos(deg(\a*\pu+\fa))},{\a*0.41+\A*cos(deg(1.3*(\o*\t+\a)))})--(8.5,{\t-1-\pt*cos(deg((\a+0.1)*\pu+\fa))},{(\a+0.1)*0.41+\A*cos(deg(1.3*(\o*\t+\a+0.1)))})--(8.5,\t,{(\a+0.1)*0.41+\A*cos(deg(1.3*(\o*\t+\a+0.1)))})--(8.5,\t,{\a*0.41+\A*cos(deg(1.3*(\o*\t+\a)))});
  \draw[color=purple,thick](8.5,{\t-1-\pt*cos(deg(\a*\pu+\fa))},{\a*0.41+\A*cos(deg(1.3*(\o*\t+\a)))})--(8.5,{\t-1-\pt*cos(deg((\a+0.1)*\pu+\fa))},{(\a+0.1)*0.41+\A*cos(deg(1.3*(\o*\t+\a+0.1)))});
  }


\pgfmathsetmacro{\t}{2.4}

  \foreach \a in {1,1.1,1.2,1.3,1.4,1.5,1.6,1.7,1.8,1.9,2,2.1,2.2,2.3,2.4}
 {
 \path[fill=green,opacity=0.41](9.25,\t,{\a*0.41+\A*cos(deg(1.3*(\o*\t+\a)))})--(9.25,{\t-1-\pt*cos(deg(\a*\pu+\fa))},{\a*0.41+\A*cos(deg(1.3*(\o*\t+\a)))})--(9.25,{\t-1-\pt*cos(deg((\a+0.1)*\pu+\fa))},{(\a+0.1)*0.41+\A*cos(deg(1.3*(\o*\t+\a+0.1)))})--(9.25,\t,{(\a+0.1)*0.41+\A*cos(deg(1.3*(\o*\t+\a+0.1)))})--(9.25,\t,{\a*0.41+\A*cos(deg(1.3*(\o*\t+\a)))});
  \draw[color=purple,thick](9.25,{\t-1-\pt*cos(deg(\a*\pu+\fa))},{\a*0.41+\A*cos(deg(1.3*(\o*\t+\a)))})--(9.25,{\t-1-\pt*cos(deg((\a+0.1)*\pu+\fa))},{(\a+0.1)*0.41+\A*cos(deg(1.3*(\o*\t+\a+0.1)))});
  }

  \foreach \a in {2.4,2.5,2.6,2.7,2.8,2.9,3,3.1,3.2,3.3,3.4,3.5,3.6,3.7,3.8,3.9,4,4.1,4.2,4.3,4.4}
 {
 \path[fill=green,opacity=0.41](10,\t,{\a*0.41+\A*cos(deg(1.3*(\o*\t+\a)))})--(10,{\t-1-\pt*cos(deg(\a*\pu+\fa))},{\a*0.41+\A*cos(deg(1.3*(\o*\t+\a)))})--(10,{\t-1-\pt*cos(deg((\a+0.1)*\pu+\fa))},{(\a+0.1)*0.41+\A*cos(deg(1.3*(\o*\t+\a+0.1)))})--(10,\t,{(\a+0.1)*0.41+\A*cos(deg(1.3*(\o*\t+\a+0.1)))})--(10,\t,{\a*0.41+\A*cos(deg(1.3*(\o*\t+\a)))});
  \draw[color=purple,thick](10,{\t-1-\pt*cos(deg(\a*\pu+\fa))},{\a*0.41+\A*cos(deg(1.3*(\o*\t+\a)))})--(10,{\t-1-\pt*cos(deg((\a+0.1)*\pu+\fa))},{(\a+0.1)*0.41+\A*cos(deg(1.3*(\o*\t+\a+0.1)))});
  }
  \foreach \a in {4.4,4.5,4.6,4.7,4.8,4.9,5,5.1,5.2,5.3,5.4,5.5,5.6,5.7,5.8,5.9,6}
 {
 \path[fill=green,opacity=0.41](10.75,\t,{\a*0.41+\A*cos(deg(1.3*(\o*\t+\a)))})--(10.75,{\t-1-\pt*cos(deg(\a*\pu+\fa))},{\a*0.41+\A*cos(deg(1.3*(\o*\t+\a)))})--(10.75,{\t-1-\pt*cos(deg((\a+0.1)*\pu+\fa))},{(\a+0.1)*0.41+\A*cos(deg(1.3*(\o*\t+\a+0.1)))})--(10.75,\t,{(\a+0.1)*0.41+\A*cos(deg(1.3*(\o*\t+\a+0.1)))})--(10.75,\t,{\a*0.41+\A*cos(deg(1.3*(\o*\t+\a)))});
  \draw[color=purple,thick](10.75,{\t-1-\pt*cos(deg(\a*\pu+\fa))},{\a*0.41+\A*cos(deg(1.3*(\o*\t+\a)))})--(10.75,{\t-1-\pt*cos(deg((\a+0.1)*\pu+\fa))},{(\a+0.1)*0.41+\A*cos(deg(1.3*(\o*\t+\a+0.1)))});
  }


\pgfmathsetmacro{\t}{4}

  \foreach \a in {1,1.1,1.2,1.3,1.4,1.5,1.6,1.7,1.8,1.9,2,2.1,2.2,2.3,2.4,2.5,2.6,2.7,2.8}
 {
 \path[fill=green,opacity=0.41](11.5,\t,{\a*0.41+\A*cos(deg(1.3*(\o*\t+\a)))})--(11.5,{\t-1-\pt*cos(deg(\a*\pu+\fa))},{\a*0.41+\A*cos(deg(1.3*(\o*\t+\a)))})--(11.5,{\t-1-\pt*cos(deg((\a+0.1)*\pu+\fa))},{(\a+0.1)*0.41+\A*cos(deg(1.3*(\o*\t+\a+0.1)))})--(11.5,\t,{(\a+0.1)*0.41+\A*cos(deg(1.3*(\o*\t+\a+0.1)))})--(11.5,\t,{\a*0.41+\A*cos(deg(1.3*(\o*\t+\a)))});
  \draw[color=purple,thick](11.5,{\t-1-\pt*cos(deg(\a*\pu+\fa))},{\a*0.41+\A*cos(deg(1.3*(\o*\t+\a)))})--(11.5,{\t-1-\pt*cos(deg((\a+0.1)*\pu+\fa))},{(\a+0.1)*0.41+\A*cos(deg(1.3*(\o*\t+\a+0.1)))});
  }

  \foreach \a in {2.8,2.9,3,3.1,3.2,3.3,3.4,3.5,3.6,3.7,3.8,3.9,4,4.1,4.2,4.3,4.4,4.5,4.6,4.7,4.8,4.9,5,5.1,5.2,5.3,5.4,5.5,5.6}
 {
 \path[fill=green,opacity=0.41](12.25,\t,{\a*0.41+\A*cos(deg(1.3*(\o*\t+\a)))})--(12.25,{\t-1-\pt*cos(deg(\a*\pu+\fa))},{\a*0.41+\A*cos(deg(1.3*(\o*\t+\a)))})--(12.25,{\t-1-\pt*cos(deg((\a+0.1)*\pu+\fa))},{(\a+0.1)*0.41+\A*cos(deg(1.3*(\o*\t+\a+0.1)))})--(12.25,\t,{(\a+0.1)*0.41+\A*cos(deg(1.3*(\o*\t+\a+0.1)))})--(12.25,\t,{\a*0.41+\A*cos(deg(1.3*(\o*\t+\a)))});
  \draw[color=purple,thick](12.25,{\t-1-\pt*cos(deg(\a*\pu+\fa))},{\a*0.41+\A*cos(deg(1.3*(\o*\t+\a)))})--(12.25,{\t-1-\pt*cos(deg((\a+0.1)*\pu+\fa))},{(\a+0.1)*0.41+\A*cos(deg(1.3*(\o*\t+\a+0.1)))});
  }
  \foreach \a in {5.6,5.7,5.8,5.9,6}
 {
 \path[fill=green,opacity=0.41](13,\t,{\a*0.41+\A*cos(deg(1.3*(\o*\t+\a)))})--(13,{\t-1-\pt*cos(deg(\a*\pu+\fa))},{\a*0.41+\A*cos(deg(1.3*(\o*\t+\a)))})--(13,{\t-1-\pt*cos(deg((\a+0.1)*\pu+\fa))},{(\a+0.1)*0.41+\A*cos(deg(1.3*(\o*\t+\a+0.1)))})--(13,\t,{(\a+0.1)*0.41+\A*cos(deg(1.3*(\o*\t+\a+0.1)))})--(13,\t,{\a*0.41+\A*cos(deg(1.3*(\o*\t+\a)))});
  \draw[color=purple,thick](13,{\t-1-\pt*cos(deg(\a*\pu+\fa))},{\a*0.41+\A*cos(deg(1.3*(\o*\t+\a)))})--(13,{\t-1-\pt*cos(deg((\a+0.1)*\pu+\fa))},{(\a+0.1)*0.41+\A*cos(deg(1.3*(\o*\t+\a+0.1)))});
  }
    \node[color=purple] at (7.75,0.4,-2){$\tilde{\mu}_Y^{t^*_1}$};
     \node[color=purple] at (10,2.4,-2){$\tilde{\mu}_Y^{t^*_2}$};
      \node[color=purple] at (12.25,4,-2){$\tilde{\mu}_Y^{t^*_3}$};
\end{tikzpicture}
\end{center}
\caption{The physical system
$\varphi(\phi,t)=k\cdot\varphi+A\cos(\omega t+\phi)$ and the
\emph{folding-domain function} approach.} \label{fig1}
\end{figure}
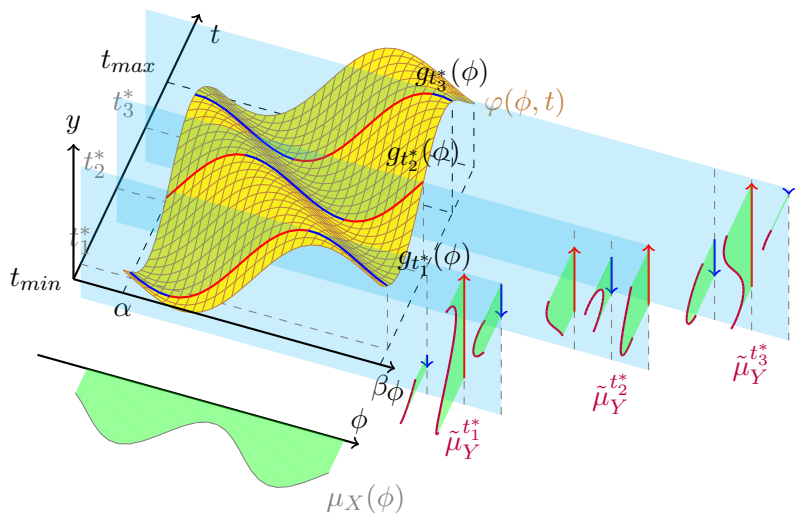

Following Section \ref{sub-1.1}, the procedure starts by replacing the
description of the physical system given by $\varphi(\phi,t),$
for each $t\in[0,t_{max}]$, with:
\begin{enumerate}
\item the point $\chi_0(t):=\varphi(\alpha,t)$,
corresponding to $g(\alpha_0)=g(\alpha)=g_t(\alpha),$
\item the vector $\Lambda_t$ that contains the arrows,
\item the function $\eta_t$,
\item according to \eqref{form-1}, the functions $u_{i,t}(y),$
(with $i\in\{\rho_1,\dots,\rho_{p_{t}}(y)\}=I_t(y)$),
which associate all the values $u_{\rho_1},\dots,u_{\rho_{p_{t}}}$ to each point
$y$ in the physical space,
such that $\eta_t(u_i)=\phi_i$ and $\varphi(\phi_i,t)=y$.
\end{enumerate}

We further notice that the bijection $\eta_{t^*_i}$ allows us to
associate a probability weight $\tilde{\mu}_Y$ to each point on
the three arrows via the PDF $\mu_X$: indeed, in Figure
\ref{fig1}, at the right-hand side, we show the graphs of three
groups (for $t=t^*_1, t^*_2, t^*_3$) of three functions (over
three arrows). We recall that we have denoted these three groups
of functions (the so-called unfolded intermediate) as
$\tilde{\mu}^t_Y$, with $t=t^*_1, t^*_2, t^*_3$. These three plots
have been produced as a composition of $\mu_X$ with the inversions
of $g_{t^*_i}$ in each of the three intervals of monotonicity in
which the domain of $\mu_X$ is split. This can be obtained by
choosing a point $u\in[0,S_{\Lambda_{t^*_i}}]$ and then by
computing $\mu_X\bigl[\eta_{t^*_i}(u)\bigr]$. We stress that these
composite graphs are represented just for the sake of intuitively
describing the underlying idea. The real graphs of $\mu_Y$ should
take into account a scaling factor given by the inverse derivative
and should be overlapped by the unfolding, summing up the
different layers, according to \eqref{eq-1.6}.

The graphs of functions $\eta_t$ are plotted in Figure \ref{fig2}.
In both Figures
\ref{fig1} and \ref{fig2} we consider three cases $t=t^*_1, t^*_2, t^*_3$ for which
the same idea can be applied.

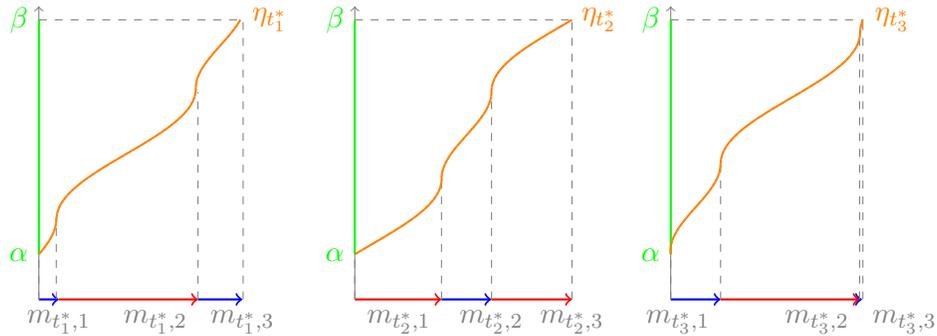
\begin{figure}[ht]
\begin{center}
\begin{tikzpicture}[scale=0.6]

   \pgfmathsetmacro{\o}{1}
      \pgfmathsetmacro{\A}{1}

   \pgfmathsetmacro{\gu}{14}
   \pgfmathsetmacro{\gd}{7}

\pgfmathsetmacro{\DelU}{0.41+\A*cos(deg(1.3*(\o*4+1)))-(3*0.41+\A*cos(deg(1.3*(\o*4+3))))}
   \pgfmathsetmacro{\DelD}{5.8*0.41+\A*cos(deg(1.3*(\o*4+5.8)))-(3*0.41+\A*cos(deg(1.3*(\o*4+3))))}
\pgfmathsetmacro{\DelT}{5.8*0.41+\A*cos(deg(1.3*(\o*4+5.8)))-(6.2*0.41+\A*cos(deg(1.3*(\o*4+6.2))))}

 \draw[color=blue,->,thick](\gu,0)--({\DelU+\gu},0)node[below left,color=gray]{$m_{t^*_3,1}$};
        \draw[color=red,->,thick]({\DelU+\gu},0)--({\DelU+\DelD+\gu},0)node[below left,color=gray]{$m_{t^*_3,2}$};
        \draw[color=blue,->,thick]({\DelU+\DelD+\gu},0)--({\DelU+\DelD+\DelT+\gu},0)node[below right,color=gray]{$m_{t^*_3,3}$};

\node[right,color=orange] at (4.5,6.2){$\eta_{t^*_1}$};
\node[right,color=orange] at ({4.8+\gd},6.2){$\eta_{t^*_2}$};
\node[right,color=orange] at ({4.3+\gu},6.2){$\eta_{t^*_3}$};

  \pgfmathsetmacro{\pu}{1.5}
  \pgfmathsetmacro{\pd}{-3}
     \pgfmathsetmacro{\pt}{0.55}

\pgfmathsetmacro{\t}{4}

\pgfmathsetmacro{\Startu}{0.41+\A*cos(deg(1.3*(\o*4+1)))}
\pgfmathsetmacro{\Startd}{3*0.41+\A*cos(deg(1.3*(\o*4+3)))}
\pgfmathsetmacro{\Startt}{5.9*0.41+\A*cos(deg(1.3*(\o*4+5.9)))}

\draw[dashed,color=gray](\gu,0)--(\gu,1);
\pgfmathsetmacro{\a}{3}
\draw[dashed,color=gray]({\gu-(\a*0.41+\A*cos(deg(1.3*(\o*\t+\a))))+\Startu},0)--({\gu-(\a*0.41+\A*cos(deg(1.3*(\o*\t+\a))))+\Startu},\a);
\pgfmathsetmacro{\a}{5.8}
\draw[dashed,color=gray]({\gu+\a*0.41+\A*cos(deg(1.3*(\o*\t+\a)))-\Startd+\DelU},0)--({\gu+\a*0.41+\A*cos(deg(1.3*(\o*\t+\a)))-\Startd+\DelU},\a);
\pgfmathsetmacro{\a}{6.2}
\draw[dashed,color=gray]({\gu-(\a*0.41+\A*cos(deg(1.3*(\o*\t+\a))))+0.01+\Startt+\DelU+\DelD},0)--({\gu-(\a*0.41+\A*cos(deg(1.3*(\o*\t+\a))))+0.01+\Startt+\DelU+\DelD},\a);
\draw[dashed,color=gray](\gu,6.2)--({\gu-(\a*0.41+\A*cos(deg(1.3*(\o*\t+\a))))+0.01+\Startt+\DelU+\DelD},\a);
\draw[color=gray,->](\gu,0)--(\gu,6.5);
\draw[color=green,thick](\gu,1)node[left]{$\alpha$}--(\gu,6.2)node[left]{$\beta$};
%


  \foreach \a in {1,1.1,1.2,1.3,1.4,1.5,1.6,1.7,1.8,1.9,2,2.1,2.2,2.3,2.4,2.5,2.6,2.7,2.8,2.9}
 {
  \draw[color=orange,thick]({\gu-(\a*0.41+\A*cos(deg(1.3*(\o*\t+\a))))+\Startu},\a)--({\gu-((\a+0.1)*0.41+\A*cos(deg(1.3*(\o*\t+(\a+0.1)))))+\Startu},{\a+0.1});
  }

  \foreach \a in {3,3.1,3.2,3.3,3.4,3.5,3.6,3.7,3.8,3.9,4,4.1,4.2,4.3,4.4,4.5,4.6,4.7,4.8,4.9,5,5.1,5.2,5.3,5.4,5.5,5.6,5.7,5.8}
 {
  \draw[color=orange,thick]({\gu+\a*0.41+\A*cos(deg(1.3*(\o*\t+\a)))-\Startd+\DelU},\a)--({\gu+(\a+0.1)*0.41+\A*cos(deg(1.3*(\o*\t+(\a+0.1))))-\Startd+\DelU},{\a+0.1});
  }

  \foreach \a in {5.9,6,6.1}
 {
  \draw[color=orange,thick]({\gu-(\a*0.41+\A*cos(deg(1.3*(\o*\t+\a))))+0.01+\Startt+\DelU+\DelD},\a)--({\gu-((\a+0.1)*0.41+\A*cos(deg(1.3*(\o*\t+(\a+0.1)))))+0.01+\Startt+\DelU+\DelD},{\a+0.1});
  }


%
\pgfmathsetmacro{\DelUd}{2.6*0.41+\A*cos(deg(1.3*(\o*(2.4)+2.6)))-(0.41+\A*cos(deg(1.3*(\o*(2.4)+1))))}
   \pgfmathsetmacro{\DelDd}{2.6*0.41+\A*cos(deg(1.3*(\o*(2.4)+2.6)))-(4.6*0.41+\A*cos(deg(1.3*(\o*(2.4)+4.6))))}
\pgfmathsetmacro{\DelTd}{6.2*0.41+\A*cos(deg(1.3*(\o*(2.4)+6.2)))-(4.6*0.41+\A*cos(deg(1.3*(\o*(2.4)+4.6))))}

   \draw[color=red,->,thick](\gd,0)--({\gd+\DelUd},0)node[below left,color=gray]{$m_{t^*_2,1}$};
        \draw[color=blue,->,thick]({\DelUd+\gd},0)--({\DelUd+\DelDd+\gd},0)node[below,color=gray]{$m_{t^*_2,2}$};
        \draw[color=red,->,thick]({\DelUd+\DelDd+\gd},0)--({\DelUd+\DelDd+\DelTd+\gd},0)node[below,color=gray]{$m_{t^*_2,3}$};

        \pgfmathsetmacro{\t}{2.4}

        \pgfmathsetmacro{\Startu}{0.41+\A*cos(deg(1.3*(\o*2.4+1)))}
          \pgfmathsetmacro{\Startd}{2.7*0.41+\A*cos(deg(1.3*(\o*2.4+2.7)))}
  \pgfmathsetmacro{\Startt}{4.6*0.41+\A*cos(deg(1.3*(\o*2.4+4.6)))}

\draw[dashed,color=gray](\gd,0)--(\gd,1);
\pgfmathsetmacro{\a}{2.6}
\draw[dashed,color=gray]({\gd+(\a*0.41+\A*cos(deg(1.3*(\o*\t+\a))))-\Startu},0)--({\gd+(\a*0.41+\A*cos(deg(1.3*(\o*\t+\a))))-\Startu},\a);
\pgfmathsetmacro{\a}{4.6}
\draw[dashed,color=gray]({\gd-(\a*0.41+\A*cos(deg(1.3*(\o*\t+\a))))+\Startd+\DelUd},0)--({\gd-(\a*0.41+\A*cos(deg(1.3*(\o*\t+\a))))+\Startd+\DelUd},\a);
\pgfmathsetmacro{\a}{6.2}
\draw[dashed,color=gray]({\gd+(\a*0.41+\A*cos(deg(1.3*(\o*\t+\a))))-\Startt+\DelUd+\DelDd},0)--({\gd+(\a*0.41+\A*cos(deg(1.3*(\o*\t+\a))))-\Startt+\DelUd+\DelDd},\a);
\draw[dashed,color=gray](\gd,6.2)--({\gd+(\a*0.41+\A*cos(deg(1.3*(\o*\t+\a))))-\Startt+\DelUd+\DelDd},\a);
\draw[color=gray,->](\gd,0)--(\gd,6.5);
\draw[color=green,thick](\gd,1)node[left]{$\alpha$}--(\gd,6.2)node[left]{$\beta$};
%

        \foreach \a in {1,1.1,1.2,1.3,1.4,1.5,1.6,1.7,1.8,1.9,2,2.1,2.2,2.3,2.4,2.5,2.6}
 {
  \draw[color=orange,thick]({\gd+(\a*0.41+\A*cos(deg(1.3*(\o*\t+\a))))-\Startu},\a)--({\gd+((\a+0.1)*0.41+\A*cos(deg(1.3*(\o*\t+(\a+0.1)))))-\Startu},{\a+0.1});
  }

  \foreach \a in {2.7,2.8,2.9,3,3.1,3.2,3.3,3.4,3.5,3.6,3.7,3.8,3.9,4,4.1,4.2,4.3,4.4,4.5}
 {
  \draw[color=orange,thick]({\gd-(\a*0.41+\A*cos(deg(1.3*(\o*\t+\a))))+\Startd+\DelUd},\a)--({\gd-((\a+0.1)*0.41+\A*cos(deg(1.3*(\o*\t+(\a+0.1)))))+\Startd+\DelUd},{\a+0.1});
  }

  \foreach \a in {4.6,4.7,4.8,4.9,5,5.1,5.2,5.3,5.4,5.5,5.6,5.7,5.8,5.9,6,6.1}
 {
  \draw[color=orange,thick]({\gd+(\a*0.41+\A*cos(deg(1.3*(\o*\t+\a))))-\Startt+\DelUd+\DelDd},\a)--({\gd+((\a+0.1)*0.41+\A*cos(deg(1.3*(\o*\t+(\a+0.1)))))-\Startt+\DelUd+\DelDd},{\a+0.1});
  }


   \pgfmathsetmacro{\DelUt}{0.41+\A*cos(deg(1.3*(\o*(0.4)+1))-(1.8*0.41+\A*cos(deg(1.3*(\o*(0.4)+1.8))))}
   \pgfmathsetmacro{\DelDt}{4.6*0.41+\A*cos(deg(1.3*(\o*(0.4)+4.6)))-(1.8*0.41+\A*cos(deg(1.3*(\o*(0.4)+1.8))))}
\pgfmathsetmacro{\DelTt}{4.6*0.41+\A*cos(deg(1.3*(\o*(0.4)+4.6)))-(6.2*0.41+\A*cos(deg(1.3*(\o*(0.4)+6.2))))}


   \draw[color=blue,->,thick](0,0)--(\DelUt+0.281,0)node[below,color=gray]{$m_{t^*_1,1}$};
        \draw[color=red,->,thick](\DelUt+0.281,0)--({\DelUt+0.281+\DelDt},0)node[below left,color=gray]{$m_{t^*_1,2}$};
        \draw[color=blue,->,thick]({\DelUt+0.281+\DelDt},0)--({\DelUt+0.281+\DelDt+\DelTt},0)node[below,color=gray]{$m_{t^*_1,3}$};

 \pgfmathsetmacro{\t}{0.4}

\pgfmathsetmacro{\Startu}{0.41+\A*cos(deg(1.3*(\o*0.4+1)))}
\pgfmathsetmacro{\Startd}{1.8*0.41+\A*cos(deg(1.3*(\o*0.4+1.8)))}
 \pgfmathsetmacro{\Startt}{4.7*0.41+\A*cos(deg(1.3*(\o*0.4+4.7)))}

\draw[dashed,color=gray](0,0)--(0,1);
\pgfmathsetmacro{\a}{1.8}
\draw[dashed,color=gray]({-(\a*0.41+\A*cos(deg(1.3*(\o*\t+\a))))+\Startu},0)--({-(\a*0.41+\A*cos(deg(1.3*(\o*\t+\a))))+\Startu},\a);
\pgfmathsetmacro{\a}{4.6}
\draw[dashed,color=gray]({\a*0.41+\A*cos(deg(1.3*(\o*\t+\a)))-\Startd+\DelUt+0.281},0)--({\a*0.41+\A*cos(deg(1.3*(\o*\t+\a)))-\Startd+\DelUt+0.281},\a);
\pgfmathsetmacro{\a}{6.2}
\draw[dashed,color=gray]({-(\a*0.41+\A*cos(deg(1.3*(\o*\t+\a))))+0.01+\Startt+\DelUt+0.281+\DelDt},0)--({-(\a*0.41+\A*cos(deg(1.3*(\o*\t+\a))))+0.01+\Startt+\DelUt+0.281+\DelDt},\a);
\draw[dashed,color=gray](0,6.2)--({-(\a*0.41+\A*cos(deg(1.3*(\o*\t+\a))))+0.01+\Startt+\DelUt+0.281+\DelDt},\a);
\draw[color=gray,->](0,0)--(0,6.5);
\draw[color=green,thick](0,1)node[left]{$\alpha$}--(0,6.2)node[left]{$\beta$};
%

  \foreach \a in {1,1.1,1.2,1.3,1.4,1.5,1.6,1.7}
 {
  \draw[color=orange,thick]({-(\a*0.41+\A*cos(deg(1.3*(\o*\t+\a))))+\Startu},\a)--({-((\a+0.1)*0.41+\A*cos(deg(1.3*(\o*\t+(\a+0.1)))))+\Startu},{\a+0.1});
  }

  \foreach \a in {1.8,1.9,2,2.1,2.2,2.3,2.4,2.5,2.6,2.7,2.8,2.9,3,3.1,3.2,3.3,3.4,3.5,3.6,3.7,3.8,3.9,4,4.1,4.2,4.3,4.4,4.5,4.6}
 {
  \draw[color=orange,thick]({\a*0.41+\A*cos(deg(1.3*(\o*\t+\a)))-\Startd+\DelUt+0.23},\a)--({(\a+0.1)*0.41+\A*cos(deg(1.3*(\o*\t+(\a+0.1))))-\Startd+\DelUt+0.23},{\a+0.1});
  }

  \foreach \a in {4.7,4.8,4.9,5,5.1,5.2,5.3,5.4,5.5,5.6,5.7,5.8,5.9,6,6.1}
 {
  \draw[color=orange,thick]({-(\a*0.41+\A*cos(deg(1.3*(\o*\t+\a))))+\Startt+\DelUt+\DelDt+0.23},\a)--({-((\a+0.1)*0.41+\A*cos(deg(1.3*(\o*\t+(\a+0.1)))))+\Startt+\DelUt+\DelDt+0.23},{\a+0.1});
  }

\end{tikzpicture}
\end{center}
\caption{The functions $\eta_{t^*_1}$, $\eta_{t^*_2}$ and $\eta_{t^*_3}$
for different values of $t^*_1< t^*_2 <t^*_3.$}
\label{fig2}
\end{figure}

Now we conclude this section providing an heuristic derivation of
the PDF $\mu_Y=\psi(y,t)$. Let $y$ be a point of the physical space where it exists a probability of finding the system, with respect to the random variable $\phi$. The PDF $\psi(y,t)$ is
obtained by determining all the points
$\phi_i(y)\in[\alpha,\beta]$ such that $\varphi(\phi_i(y),t)=y$.
Indeed, we expect that the probability of finding the system in
$y$ at time $t$ can be obtained as the sum of the probabilities
of all the values of $\phi_i(y)$ that will make the system to be in the position $y$ at the time
$t$. The subintervals within $[\alpha,\beta]$ where these $\phi_i(y)$
are located, are determined by the set of indexes $I_t(y).$ This is due to the fact that $\varphi(\phi,t)$ is \emph{spreading} the
mass probability $\mu_X$ around in the image space. In Figure \ref{fig3} we show the regions where overlap will occur (darker green). In turn, in Figure \ref{fig4} we show how this overlap will displace the resulting probability. For simplicity, however, in this later figure we have disregarded the Jacobian of the transformation in the summation of the different parts. However, this Jacobian is a essential part in the transformation as $\varphi$ squeezes or stretches
the probability mass $\mu(\phi)$ in the final space.This
deformation also affects the final PDF $\psi$ through the derivative (Jacobian) in \eqref{eq-1.6}.
For example, when $y$ corresponds to a local minimum
or local maximum value of $\varphi(\phi,t)$, we expect a \emph{peak
of probability}. Indeed the derivative
$\partial\varphi/\partial\phi$ is zero and then the derivative of
the corresponding inverse auxiliary function $\eta_t$ is infinity.

In summary, the main idea behind the FDF
approach can be sketched in following scheme:
\begin{equation}\label{eq-schema}
\begin{split}
y&\overset{u_{i,t}}{\longrightarrow}
\{u_{\rho_1},\dots,u_{\rho_{p_t}}\}
\overset{\eta_t}{\longrightarrow}
\{\phi_i:i\in I_t(y)\}\overset{\mu_X}{\longrightarrow}\dots\\
&\dots\overset{\mu_X}{\longrightarrow} \{\mu_X(\phi_i):i\in I_t(y)\}
\overset{\sum}{\longrightarrow}
\sum_{i\in I_t(y)}\mu_X(\phi_i(y))\cdot\frac{d\eta_t}{du}(u_i),
\end{split}
\end{equation}
where we recall that $\phi_i=\eta_t(u_i)$ for each $i\in I_t(y)$.

We want to emphasize that the FDF approach presents several
advantages from the point of view of numerical implementation of an algorithm aiming at the RVT.
In particular,
\begin{enumerate}
\item $u_{i,t}(x)$ is piecewise linear,
\item $\eta_{t}(u)$ is invertible,
\item both of these functions can be very easily determined numerically.
\end{enumerate}
All these facts permits the use of an algorithm which is not based on the reconstruction of the final probability via the generation of bundles of trajectories, using the Monte Carlo method, and calculating the final probability through a histogram. Instead, the piece-wise nature of our approach allows us to construct the final solution through a more effective piece-wise analysis.

\begin{figure}[ht]
\begin{center}
\begin{tikzpicture}[scale=0.8]

   \pgfmathsetmacro{\o}{1}
      \pgfmathsetmacro{\A}{1}

 \pgfmathsetmacro{\pu}{2}
   \pgfmathsetmacro{\pd}{-3}
      \pgfmathsetmacro{\pt}{0.55}

      \pgfmathsetmacro{\fa}{2}
\pgfmathsetmacro{\delu}{5}

\pgfmathsetmacro{\deld}{10}

\draw[->](-0.5,-0.4)--(3.5,-0.4)node[below]{$y$};
\draw[->](0,-0.7)--(0,1.6)node[left]{};

\pgfmathsetmacro{\t}{0.4}

  \foreach \a in {1,1.1,1.2,1.3,1.4,1.5,1.6,1.7,1.8}
 {
 \path[fill=green,opacity=0.41]({\a*0.41+\A*cos(deg(1.3*(\o*\t+\a)))},{-\t})--({\a*0.41+\A*cos(deg(1.3*(\o*\t+\a)))},{-(\t-1-\pt*cos(deg(\a*\pu+\fa)))})--({(\a+0.1)*0.41+\A*cos(deg(1.3*(\o*\t+\a+0.1)))},{-(\t-1-\pt*cos(deg((\a+0.1)*\pu+\fa)))})--({(\a+0.1)*0.41+\A*cos(deg(1.3*(\o*\t+\a+0.1)))},{-\t})--({\a*0.41+\A*cos(deg(1.3*(\o*\t+\a)))},{-\t});
  \draw[color=purple,thick]({\a*0.41+\A*cos(deg(1.3*(\o*\t+\a)))},{-(\t-1-\pt*cos(deg(\a*\pu+\fa)))})--({(\a+0.1)*0.41+\A*cos(deg(1.3*(\o*\t+\a+0.1)))},{-(\t-1-\pt*cos(deg((\a+0.1)*\pu+\fa)))});
  }

  \foreach \a in {1.8,1.9,2,2.1,2.2,2.3,2.4,2.5,2.6,2.7,2.8,2.9,3,3.1,3.2,3.3,3.4,3.5,3.6,3.7,3.8,3.9,4,4.1,4.2,4.3,4.4}
 {
 \path[fill=green,opacity=0.41]({\a*0.41+\A*cos(deg(1.3*(\o*\t+\a)))},{-\t})--({\a*0.41+\A*cos(deg(1.3*(\o*\t+\a)))},{-(\t-1-\pt*cos(deg(\a*\pu+\fa)))})--({(\a+0.1)*0.41+\A*cos(deg(1.3*(\o*\t+\a+0.1)))},{-(\t-1-\pt*cos(deg((\a+0.1)*\pu+\fa)))})--({(\a+0.1)*0.41+\A*cos(deg(1.3*(\o*\t+\a+0.1)))},{-\t})--({\a*0.41+\A*cos(deg(1.3*(\o*\t+\a)))},{-\t});
  \draw[color=purple,thick]({\a*0.41+\A*cos(deg(1.3*(\o*\t+\a)))},{-(\t-1-\pt*cos(deg(\a*\pu+\fa)))})--({(\a+0.1)*0.41+\A*cos(deg(1.3*(\o*\t+\a+0.1)))},{-(\t-1-\pt*cos(deg((\a+0.1)*\pu+\fa)))});
  }
  \foreach \a in {4.4,4.5,4.6,4.7,4.8,4.9,5,5.1,5.2,5.3,5.4,5.5,5.6,5.7,5.8,5.9,6}
 {
 \path[fill=green,opacity=0.41]({\a*0.41+\A*cos(deg(1.3*(\o*\t+\a)))},{-\t})--({\a*0.41+\A*cos(deg(1.3*(\o*\t+\a)))},{-(\t-1-\pt*cos(deg(\a*\pu+\fa)))})--({(\a+0.1)*0.41+\A*cos(deg(1.3*(\o*\t+\a+0.1)))},{-(\t-1-\pt*cos(deg((\a+0.1)*\pu+\fa)))})--({(\a+0.1)*0.41+\A*cos(deg(1.3*(\o*\t+\a+0.1)))},{-\t})--({\a*0.41+\A*cos(deg(1.3*(\o*\t+\a)))},{-\t});
  \draw[color=purple,thick]({\a*0.41+\A*cos(deg(1.3*(\o*\t+\a)))},{-(\t-1-\pt*cos(deg(\a*\pu+\fa)))})--({(\a+0.1)*0.41+\A*cos(deg(1.3*(\o*\t+\a+0.1)))},{-(\t-1-\pt*cos(deg((\a+0.1)*\pu+\fa)))});
  }



 \draw[->]({-0.5+\delu},-0.4)--({3.5+\delu},-0.4)node[below]{$y$};
\draw[->](\delu,-0.7)--(\delu,1.6)node[left]{};

\pgfmathsetmacro{\t}{2.4}

  \foreach \a in {1,1.1,1.2,1.3,1.4,1.5,1.6,1.7,1.8,1.9,2,2.1,2.2,2.3,2.4}
 {
 \path[fill=green,opacity=0.41]({\a*0.41+\A*cos(deg(1.3*(\o*\t+\a)))+\delu},{-\t+2})--({\a*0.41+\A*cos(deg(1.3*(\o*\t+\a)))+\delu},{-(\t-1-\pt*cos(deg(\a*\pu+\fa)))+2})--({(\a+0.1)*0.41+\A*cos(deg(1.3*(\o*\t+\a+0.1)))+\delu},{-(\t-1-\pt*cos(deg((\a+0.1)*\pu+\fa)))+2})--({(\a+0.1)*0.41+\A*cos(deg(1.3*(\o*\t+\a+0.1)))+\delu},{-\t+2})--({\a*0.41+\A*cos(deg(1.3*(\o*\t+\a)))+\delu},{-\t+2});
  \draw[color=purple,thick]({\a*0.41+\A*cos(deg(1.3*(\o*\t+\a)))+\delu},{-(\t-1-\pt*cos(deg(\a*\pu+\fa)))+2})--({(\a+0.1)*0.41+\A*cos(deg(1.3*(\o*\t+\a+0.1)))+\delu},{-(\t-1-\pt*cos(deg((\a+0.1)*\pu+\fa)))+2});
  }

  \foreach \a in {2.4,2.5,2.6,2.7,2.8,2.9,3,3.1,3.2,3.3,3.4,3.5,3.6,3.7,3.8,3.9,4,4.1,4.2,4.3,4.4}
 {
 \path[fill=green,opacity=0.41]({\a*0.41+\A*cos(deg(1.3*(\o*\t+\a)))+\delu},{-\t+2})--({\a*0.41+\A*cos(deg(1.3*(\o*\t+\a)))+\delu},{-(\t-1-\pt*cos(deg(\a*\pu+\fa)))+2})--({(\a+0.1)*0.41+\A*cos(deg(1.3*(\o*\t+\a+0.1)))+\delu},{-(\t-1-\pt*cos(deg((\a+0.1)*\pu+\fa)))+2})--({(\a+0.1)*0.41+\A*cos(deg(1.3*(\o*\t+\a+0.1)))+\delu},{-\t+2})--({\a*0.41+\A*cos(deg(1.3*(\o*\t+\a)))+\delu},{-\t+2});
  \draw[color=purple,thick]({\a*0.41+\A*cos(deg(1.3*(\o*\t+\a)))+\delu},{-(\t-1-\pt*cos(deg(\a*\pu+\fa)))+2})--({(\a+0.1)*0.41+\A*cos(deg(1.3*(\o*\t+\a+0.1)))+\delu},{-(\t-1-\pt*cos(deg((\a+0.1)*\pu+\fa)))+2});
  }
  \foreach \a in {4.4,4.5,4.6,4.7,4.8,4.9,5,5.1,5.2,5.3,5.4,5.5,5.6,5.7,5.8,5.9,6}
 {
 \path[fill=green,opacity=0.41]({\a*0.41+\A*cos(deg(1.3*(\o*\t+\a)))+\delu},{-\t+2})--({\a*0.41+\A*cos(deg(1.3*(\o*\t+\a)))+\delu},{-(\t-1-\pt*cos(deg(\a*\pu+\fa)))+2})--({(\a+0.1)*0.41+\A*cos(deg(1.3*(\o*\t+\a+0.1)))+\delu},{-(\t-1-\pt*cos(deg((\a+0.1)*\pu+\fa)))+2})--({(\a+0.1)*0.41+\A*cos(deg(1.3*(\o*\t+\a+0.1)))+\delu},{-\t+2})--({\a*0.41+\A*cos(deg(1.3*(\o*\t+\a)))+\delu},{-\t+2});
  \draw[color=purple,thick]({\a*0.41+\A*cos(deg(1.3*(\o*\t+\a)))+\delu},{-(\t-1-\pt*cos(deg(\a*\pu+\fa)))+2})--({(\a+0.1)*0.41+\A*cos(deg(1.3*(\o*\t+\a+0.1)))+\delu},{-(\t-1-\pt*cos(deg((\a+0.1)*\pu+\fa)))+2});
  }




 \draw[->]({-0.5+\deld},-0.4)--({3.5+\deld},-0.4)node[below]{$y$};
\draw[->](\deld,-0.7)--(\deld,1.6)node[left]{};

\pgfmathsetmacro{\t}{4}

  \foreach \a in {1,1.1,1.2,1.3,1.4,1.5,1.6,1.7,1.8,1.9,2,2.1,2.2,2.3,2.4,2.5,2.6,2.7,2.8}
 {
 \path[fill=green,opacity=0.41]({\a*0.41+\A*cos(deg(1.3*(\o*\t+\a)))+\deld},{-\t+3.6})--({\a*0.41+\A*cos(deg(1.3*(\o*\t+\a)))+\deld},{-(\t-1-\pt*cos(deg(\a*\pu+\fa)))+3.6})--({(\a+0.1)*0.41+\A*cos(deg(1.3*(\o*\t+\a+0.1)))+\deld},{-(\t-1-\pt*cos(deg((\a+0.1)*\pu+\fa)))+3.6})--({(\a+0.1)*0.41+\A*cos(deg(1.3*(\o*\t+\a+0.1)))+\deld},{-\t+3.6})--({\a*0.41+\A*cos(deg(1.3*(\o*\t+\a)))+\deld},{-\t+3.6});
  \draw[color=purple,thick]({\a*0.41+\A*cos(deg(1.3*(\o*\t+\a)))+\deld},{-(\t-1-\pt*cos(deg(\a*\pu+\fa)))+3.6})--({(\a+0.1)*0.41+\A*cos(deg(1.3*(\o*\t+\a+0.1)))+\deld},{-(\t-1-\pt*cos(deg((\a+0.1)*\pu+\fa)))+3.6});
  }

  \foreach \a in {2.8,2.9,3,3.1,3.2,3.3,3.4,3.5,3.6,3.7,3.8,3.9,4,4.1,4.2,4.3,4.4,4.5,4.6,4.7,4.8,4.9,5,5.1,5.2,5.3,5.4,5.5,5.6}
 {
 \path[fill=green,opacity=0.41]({\a*0.41+\A*cos(deg(1.3*(\o*\t+\a)))+\deld},{-\t+3.6})--({\a*0.41+\A*cos(deg(1.3*(\o*\t+\a)))+\deld},{-(\t-1-\pt*cos(deg(\a*\pu+\fa)))+3.6})--({(\a+0.1)*0.41+\A*cos(deg(1.3*(\o*\t+\a+0.1)))+\deld},{-(\t-1-\pt*cos(deg((\a+0.1)*\pu+\fa)))+3.6})--({(\a+0.1)*0.41+\A*cos(deg(1.3*(\o*\t+\a+0.1)))+\deld},{-\t+3.6})--({\a*0.41+\A*cos(deg(1.3*(\o*\t+\a)))+\deld},{-\t+3.6});
  \draw[color=purple,thick]({\a*0.41+\A*cos(deg(1.3*(\o*\t+\a)))+\deld},{-(\t-1-\pt*cos(deg(\a*\pu+\fa)))+3.6})--({(\a+0.1)*0.41+\A*cos(deg(1.3*(\o*\t+\a+0.1)))+\deld},{-(\t-1-\pt*cos(deg((\a+0.1)*\pu+\fa)))+3.6});
  }
  \foreach \a in {5.6,5.7,5.8,5.9,6}
 {
 \path[fill=green,opacity=0.41]({\a*0.41+\A*cos(deg(1.3*(\o*\t+\a)))+\deld},{-\t+3.6})--({\a*0.41+\A*cos(deg(1.3*(\o*\t+\a)))+\deld},{-(\t-1-\pt*cos(deg(\a*\pu+\fa)))+3.6})--({(\a+0.1)*0.41+\A*cos(deg(1.3*(\o*\t+\a+0.1)))+\deld},{-(\t-1-\pt*cos(deg((\a+0.1)*\pu+\fa)))+3.6})--({(\a+0.1)*0.41+\A*cos(deg(1.3*(\o*\t+\a+0.1)))+\deld},{-\t+3.6})--({\a*0.41+\A*cos(deg(1.3*(\o*\t+\a)))+\deld},{-\t+3.6});
  \draw[color=purple,thick]({\a*0.41+\A*cos(deg(1.3*(\o*\t+\a)))+\deld},{-(\t-1-\pt*cos(deg(\a*\pu+\fa)))+3.6})--({(\a+0.1)*0.41+\A*cos(deg(1.3*(\o*\t+\a+0.1)))+\deld},{-(\t-1-\pt*cos(deg((\a+0.1)*\pu+\fa)))+3.6});
  }
\node at (2,-1){$t^*_1$}; \node[color=purple,thick] at (2,2)
{$\tilde{\mu}_Y^{t^*_1}$}; \node at (2+\delu,-1){$t^*_2$};
\node[color=purple,thick] at (2+\delu,2)
{$\tilde{\mu}_Y^{t^*_2}$}; \node at (2+\deld,-1){$t^*_3$};
\node[color=purple,thick] at (2+\deld,2)
{$\tilde{\mu}_Y^{t^*_3}$};
\end{tikzpicture}
\end{center}
\caption{Here we have $t=t^*_1$, $t=t^*_2$ and $t=t^*_3$. We
notice that $I_{t^*_1}(y)=\{1,2\},$ $I_{t^*_1}(y)=\{2\}$ and
$I_{t^*_1}(y)=\{2,3\}$ as $y$ varies from the $g_{t^*_1,\min}$ to
$g_{t^*_1,\max}.$ Similarly, the three cases of $t=t^*_2$ as $y$
varies, are $I_{t^*_1}(y)=\{1\},$ $I_{t^*_1}(y)=\{1,2,3\}$ and
$I_{t^*_1}(y)=\{3\}$. Finally, the three cases of $t=t^*_3$ as $y$
varies, are $I_{t^*_1}(y)=\{1,2\},$ $I_{t^*_1}(y)=\{2\}$ and
$I_{t^*_1}(y)=\{2,3\}$.} \label{fig3}
\end{figure}
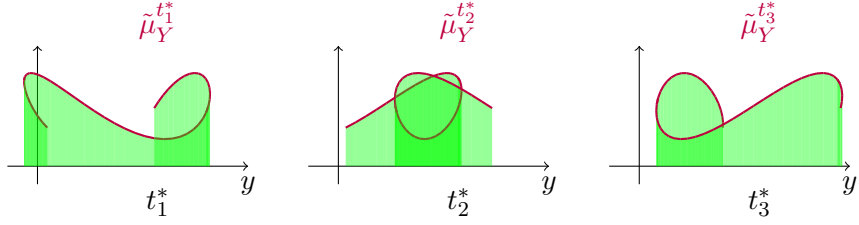

\begin{figure}[ht]
\begin{center}
\begin{tikzpicture}[scale=0.8]
\label{figPSIpre}


   \pgfmathsetmacro{\o}{1}
      \pgfmathsetmacro{\A}{1}

 \pgfmathsetmacro{\pu}{2}
   \pgfmathsetmacro{\pd}{-3}
      \pgfmathsetmacro{\pt}{0.55}

      \pgfmathsetmacro{\fa}{2}


\pgfmathsetmacro{\delu}{5}

\pgfmathsetmacro{\deld}{10}

\draw[->](-0.5,-0.4)--(3.5,-0.4)node[below]{$y$};
\draw[->](0,-0.7)--(0,3.5)node[left]{};

\pgfmathsetmacro{\t}{0.4}

  \foreach \a in {1,1.1,1.2,1.3,1.4,1.5,1.6,1.7}
 {
  \draw[color=purple,thick,opacity=0.2]({\a*0.41+\A*cos(deg(1.3*(\o*\t+\a)))},{-(\t-1-\pt*cos(deg(\a*\pu+\fa)))})--({(\a+0.1)*0.41+\A*cos(deg(1.3*(\o*\t+\a+0.1)))},{-(\t-1-\pt*cos(deg((\a+0.1)*\pu+\fa)))});
  }

  \foreach \a in {1,1.1,1.2,1.3,1.4,1.5,1.6,1.7}
 {
  \draw[color=ForestGreen,thick]({\a*0.41+\A*cos(deg(1.3*(\o*\t+\a)))},{-(\t-1-\pt*cos(deg(\a*\pu+\fa)))-(\t-1-\pt*cos(deg((1.8-(\a-1.7))*\pu+\fa)))+0.4})--({(\a+0.1)*0.41+\A*cos(deg(1.3*(\o*\t+\a+0.1)))},{-(\t-1-\pt*cos(deg((\a+0.1)*\pu+\fa)))-(\t-1-\pt*cos(deg((1.8-(\a-1.6))*\pu+\fa)))+0.4});
}

  \foreach \a in {1.8,1.9,2,2.1,2.2,2.3,2.4,2.5,2.6,2.7,2.8,2.9,3,3.1,3.2,3.3,3.4,3.5,3.6,3.7,3.8,3.9,4,4.1,4.2,4.3,4.4,4.5}
 {
  \draw[color=purple,thick,opacity=0.2]({\a*0.41+\A*cos(deg(1.3*(\o*\t+\a)))},{-(\t-1-\pt*cos(deg(\a*\pu+\fa)))})--({(\a+0.1)*0.41+\A*cos(deg(1.3*(\o*\t+\a+0.1)))},{-(\t-1-\pt*cos(deg((\a+0.1)*\pu+\fa)))});
  }

  \draw[dashed,color=gray]({1.7*0.41+\A*cos(deg(1.3*(\o*\t+1.7)))},-0.4)--({1.7*0.41+\A*cos(deg(1.3*(\o*\t+1.7)))},2.3);
  \draw[dashed,color=gray]({2.45*0.41+\A*cos(deg(1.3*(\o*\t+2.45)))},-0.4)--({2.45*0.41+\A*cos(deg(1.3*(\o*\t+2.45)))},1.6);
  \draw[dashed,color=gray]({3.6*0.41+\A*cos(deg(1.3*(\o*\t+3.6)))},-0.4)--({3.6*0.41+\A*cos(deg(1.3*(\o*\t+3.6)))},1.1);
  \draw[dashed,color=gray]({4.6*0.41+\A*cos(deg(1.3*(\o*\t+4.6)))},-0.4)--({4.6*0.41+\A*cos(deg(1.3*(\o*\t+4.6)))},1.75)node[color=ForestGreen,right]{$f_{t^*_1}$};

  \foreach \a in {2.45,2.5,2.6,2.7,2.8,2.9,3,3.1,3.2,3.3,3.4,3.5}
 {
  \draw[color=ForestGreen,thick]({\a*0.41+\A*cos(deg(1.3*(\o*\t+\a)))},{-(\t-1-\pt*cos(deg(\a*\pu+\fa)))})--({(\a+0.1)*0.41+\A*cos(deg(1.3*(\o*\t+\a+0.1)))},{-(\t-1-\pt*cos(deg((\a+0.1)*\pu+\fa)))});
  }

  \foreach \a in {4.6,4.7,4.8,4.9,5,5.1,5.2,5.3,5.4,5.5,5.6,5.7,5.8,5.9,6}
 {
  \draw[color=purple,thick,opacity=0.2]({\a*0.41+\A*cos(deg(1.3*(\o*\t+\a)))},{-(\t-1-\pt*cos(deg(\a*\pu+\fa)))})--({(\a+0.1)*0.41+\A*cos(deg(1.3*(\o*\t+\a+0.1)))},{-(\t-1-\pt*cos(deg((\a+0.1)*\pu+\fa)))});
  }

  \foreach \a in {4.65,4.7,4.8,4.9,5,5.1,5.2,5.3,5.4,5.5,5.6,5.7,5.8,5.9,6}
 {
    \draw[color=ForestGreen,thick]({\a*0.41+\A*cos(deg(1.3*(\o*\t+\a)))},{-(\t-1-\pt*cos(deg(\a*\pu+\fa)))-(\t-1-\pt*cos(deg((4.5-0.64*(\a-4.6))*\pu+\fa)))+0.4})--({(\a+0.1)*0.41+\A*cos(deg(1.3*(\o*\t+\a+0.1)))},{-(\t-1-\pt*cos(deg((\a+0.1)*\pu+\fa)))-(\t-1-\pt*cos(deg((4.5-0.64*(\a-4.5))*\pu+\fa)))+0.4});
 }


%

 \draw[->]({-0.5+\delu},-0.4)--({3.5+\delu},-0.4)node[below]{$y$};
\draw[->](\delu,-0.7)--(\delu,3.5)node[left]{};

\pgfmathsetmacro{\t}{2.4}

  \foreach \a in {1,1.1,1.2,1.3,1.4,1.5,1.6,1.7,1.8,1.9,2,2.1,2.2,2.3,2.4,2.5}
 {
  \draw[color=purple,thick,opacity=0.2]({\a*0.41+\A*cos(deg(1.3*(\o*\t+\a)))+\delu},{-(\t-1-\pt*cos(deg(\a*\pu+\fa)))+2})--({(\a+0.1)*0.41+\A*cos(deg(1.3*(\o*\t+\a+0.1)))+\delu},{-(\t-1-\pt*cos(deg((\a+0.1)*\pu+\fa)))+2});
  }

  \foreach \a in {2.6,2.7,2.8,2.9,3,3.1,3.2,3.3,3.4,3.5,3.6,3.7,3.8,3.9,4,4.1,4.2,4.3,4.4}
 {
  \draw[color=purple,thick,opacity=0.2]({\a*0.41+\A*cos(deg(1.3*(\o*\t+\a)))+\delu},{-(\t-1-\pt*cos(deg(\a*\pu+\fa)))+2})--({(\a+0.1)*0.41+\A*cos(deg(1.3*(\o*\t+\a+0.1)))+\delu},{-(\t-1-\pt*cos(deg((\a+0.1)*\pu+\fa)))+2});
  }

 \foreach \a in {2.5,2.6,2.7,2.8,2.9,3,3.1,3.2,3.3,3.4,3.5,3.6,3.7,3.8,3.9,4,4.1,4.2,4.3,4.4}
 {



      \draw[color=ForestGreen,thick]({\a*0.41+\A*cos(deg(1.3*(\o*\t+\a)))+\delu},{-(\t-1-\pt*cos(deg(\a*\pu+\fa)))-(\t-1-\pt*cos(deg((2.5-0.55*(\a-2.6))*\pu+\fa)))-(\t-1-\pt*cos(deg((5.7-0.66*(\a-2.7))*\pu+\fa)))+6.4})--({(\a+0.1)*0.41+\A*cos(deg(1.3*(\o*\t+\a+0.1)))+\delu},{-(\t-1-\pt*cos(deg((\a+0.1)*\pu+\fa)))-(\t-1-\pt*cos(deg((2.5-0.55*(\a-2.5))*\pu+\fa)))-(\t-1-\pt*cos(deg((5.7-0.66*(\a-2.6))*\pu+\fa)))+6.4});

  }

  \foreach \a in {4.5,4.6,4.7,4.8,4.9,5,5.1,5.2,5.3,5.4,5.5,5.6,5.7,5.8,5.9,6}
 {
  \draw[color=purple,thick,opacity=0.2]({\a*0.41+\A*cos(deg(1.3*(\o*\t+\a)))+\delu},{-(\t-1-\pt*cos(deg(\a*\pu+\fa)))+2})--({(\a+0.1)*0.41+\A*cos(deg(1.3*(\o*\t+\a+0.1)))+\delu},{-(\t-1-\pt*cos(deg((\a+0.1)*\pu+\fa)))+2});
  }

  \draw[dashed,color=gray]({0.41+\A*cos(deg(1.3*(\o*\t+1)))+\delu},-0.4)--({0.41+\A*cos(deg(1.3*(\o*\t+1)))+\delu},{-(\t-1-\pt*cos(deg(\pu+\fa)))+2});
  \draw[dashed,color=gray]({1.46*0.41+\A*cos(deg(1.3*(\o*\t+1.46)))+\delu+0.05},-0.4)--({1.46*0.41+\A*cos(deg(1.3*(\o*\t+1.46)))+\delu+0.05},2.35);
  \draw[color=gray,dashed]({5.8*0.41+\A*cos(deg(1.3*(\o*\t+5.8)))+\delu},-0.4)--({5.8*0.41+\A*cos(deg(1.3*(\o*\t+5.8)))+\delu},3.25);
  \draw[color=gray,dashed]({6.1*0.41+\A*cos(deg(1.3*(\o*\t+6.1)))+\delu},-0.4)--({6.1*0.41+\A*cos(deg(1.3*(\o*\t+6.1)))+\delu},{-(\t-1-\pt*cos(deg(6.1*\pu+\fa)))+2})node[color=ForestGreen,right]{$f_{t^*_2}$};

   \foreach \a in {1,1.1,1.2,1.3,1.4}
 {
  \draw[color=ForestGreen,thick]({\a*0.41+\A*cos(deg(1.3*(\o*\t+\a)))+\delu},{-(\t-1-\pt*cos(deg(\a*\pu+\fa)))+2})--({(\a+0.1)*0.41+\A*cos(deg(1.3*(\o*\t+\a+0.1)))+\delu},{-(\t-1-\pt*cos(deg((\a+0.1)*\pu+\fa)))+2});
  }


%
%




 \foreach \a in {5.8,5.9,6}
 {
  \draw[color=ForestGreen]({\a*0.41+\A*cos(deg(1.3*(\o*\t+\a)))+\delu},{-(\t-1-\pt*cos(deg(\a*\pu+\fa)))+2})--({(\a+0.1)*0.41+\A*cos(deg(1.3*(\o*\t+\a+0.1)))+\delu},{-(\t-1-\pt*cos(deg((\a+0.1)*\pu+\fa)))+2});
  }




 \draw[->]({-0.5+\deld},-0.4)--({3.5+\deld},-0.4)node[below]{$y$};
\draw[->](\deld,-0.7)--(\deld,3.5)node[left]{};

\pgfmathsetmacro{\t}{4}

  \foreach \a in {1.05,1.1,1.2,1.3,1.4,1.5,1.6,1.7,1.8,1.9,2,2.1,2.2,2.3,2.4,2.5,2.6,2.7,2.8}
 {
  \draw[color=purple,thick,opacity=0.2]({\a*0.41+\A*cos(deg(1.3*(\o*\t+\a)))+\deld},{-(\t-1-\pt*cos(deg(\a*\pu+\fa)))+3.6})--({(\a+0.1)*0.41+\A*cos(deg(1.3*(\o*\t+\a+0.1)))+\deld},{-(\t-1-\pt*cos(deg((\a+0.1)*\pu+\fa)))+3.6});
  }

  \foreach \a in {2.9,3,3.1,3.2,3.3,3.4,3.5,3.6,3.7,3.8,3.9,4,4.1,4.2,4.3,4.4,4.5,4.6,4.7,4.8,4.9,5,5.1,5.2,5.3,5.4,5.5,5.6}
 {
  \draw[color=purple,thick,opacity=0.21]({\a*0.41+\A*cos(deg(1.3*(\o*\t+\a)))+\deld},{-(\t-1-\pt*cos(deg(\a*\pu+\fa)))+3.6})--({(\a+0.1)*0.41+\A*cos(deg(1.3*(\o*\t+\a+0.1)))+\deld},{-(\t-1-\pt*cos(deg((\a+0.1)*\pu+\fa)))+3.6});
  }

  \foreach \a in {2.8,2.9,3,3.1,3.2,3.3,3.4,3.5,3.6,3.7,3.8,3.9,4}
  {
     \draw[color=ForestGreen,thick]({(2.8-1.5*(\a-2.9))*0.41+\A*cos(deg(1.3*(\o*\t+(2.8-1.5*(\a-2.9)))))+\deld},{-(\t-1-\pt*cos(deg(\a*\pu+\fa)))+0.4-(\t-1-\pt*cos(deg((2.8-1.5*(\a-2.9))*\pu+\fa)))+7.2})--({(2.8-1.5*(\a-2.8))*0.41+\A*cos(deg(1.3*(\o*\t+(2.8-1.5*(\a-2.8)))))+\deld},{-(\t-1-\pt*cos(deg((\a+0.1)*\pu+\fa)))+0.4-(\t-1-\pt*cos(deg((2.8-1.5*(\a-2.8))*\pu+\fa)))+7.2});
  }

   \foreach \a in {4.2,4.3,4.4,4.5,4.6,4.7,4.8,4.9,5,5.1,5.2,5.3,5.4,5.5}
 {
  \draw[color=ForestGreen,thick]({\a*0.41+\A*cos(deg(1.3*(\o*\t+\a)))+\deld},{-(\t-1-\pt*cos(deg(\a*\pu+\fa)))+3.6})--({(\a+0.1)*0.41+\A*cos(deg(1.3*(\o*\t+\a+0.1)))+\deld},{-(\t-1-\pt*cos(deg((\a+0.1)*\pu+\fa)))+3.6});
  }

  \foreach \a in {5.7,5.8,5.9,6}
 {
  \draw[color=purple,thick,opacity=0.21]({\a*0.41+\A*cos(deg(1.3*(\o*\t+\a)))+\deld},{-(\t-1-\pt*cos(deg(\a*\pu+\fa)))+3.6})--({(\a+0.1)*0.41+\A*cos(deg(1.3*(\o*\t+\a+0.1)))+\deld},{-(\t-1-\pt*cos(deg((\a+0.1)*\pu+\fa)))+3.6});
  }

   \foreach \a in {5.9,6}
  {
     \draw[color=ForestGreen,thick]({(5.8-(\a-5.9))*0.41+\A*cos(deg(1.3*(\o*\t+(5.8-(\a-5.9)))))+\deld},{-(\t-1-\pt*cos(deg(\a*\pu+\fa)))+0.4-(\t-1-\pt*cos(deg((5.8-(\a-5.9))*\pu+\fa)))+7.2})--({(5.8-(\a-5.8))*0.41+\A*cos(deg(1.3*(\o*\t+(5.8-(\a-5.8)))))+\deld},{-(\t-1-\pt*cos(deg((\a+0.1)*\pu+\fa)))+0.4-(\t-1-\pt*cos(deg((5.8-(\a-5.8))*\pu+\fa)))+7.2});
  }

   \draw[dashed,color=gray]({2.9*0.41+\A*cos(deg(1.3*(\o*\t+2.9)))+\deld},-0.4)--({2.9*0.41+\A*cos(deg(1.3*(\o*\t+2.9)))+\deld},1.75);

    \draw[dashed,color=gray]({4.2*0.41+\A*cos(deg(1.3*(\o*\t+4.2)))+\deld},-0.4)--({4.2*0.41+\A*cos(deg(1.3*(\o*\t+4.2)))+\deld},1.05);

    \draw[dashed,color=gray]({5.6*0.41+\A*cos(deg(1.3*(\o*\t+5.6)))+\deld},-0.4)--({5.6*0.41+\A*cos(deg(1.3*(\o*\t+5.6)))+\deld},2);

    \draw[dashed,color=gray]({5.9*0.41+\A*cos(deg(1.3*(\o*\t+5.9)))+\deld},-0.4)--({5.9*0.41+\A*cos(deg(1.3*(\o*\t+5.9)))+\deld},2.1)node[right,color=ForestGreen]{$f_{t^*_3}$};

\node at (2,-1){$t^*_1$}; \node[color=ForestGreen,thick] at (2,2.5)
{$\sum_{i\in I_{t^*_1}(y)}\mu(\phi_i(y))$}; \node at (2+\delu,-1){$t^*_2$};
\node[color=ForestGreen,thick] at (2+\delu,2)
{$\sum_{i\in I_{t^*_2}(y)}\mu(\phi_i(y))$}; \node at
(2+\deld,-1){$t^*_3$}; \node[color=ForestGreen,thick] at (2+\deld,2.7)
{$\sum_{i\in I_{t^*_3}(y)}\mu(\phi_i(y))$};

\end{tikzpicture}
\end{center}
\caption{Functions $f_{t^*_1}$, $f_{t^*_2}$ and $f_{t^*_3}$. The
graphs are obtained with $f_t=\sum_{i\in I_{t}(y)}\mu(\phi_i(y))$ for
each $t\in\{t^*_1,t^*_2,t^*_3\}$. The function $f_t$ represent the additive components
in the last part of the formula \eqref{eq-schema} without the multiplicative factor
$\frac{d\eta_t}{du}.$
} \label{fig4}
\end{figure}
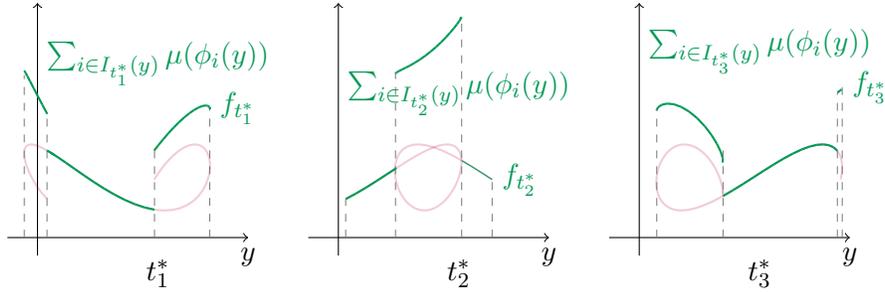

\section{The algorithm}\label{section-4}
In this section we present the application of the
Folding Domain Function algorithm
(FDF) to some cases of interest, in order to check its consistency by comparison with a brute force algorithm. As before, the dynamical system has a collection of solutions described by the function
$\varphi(\phi,t)$, which parametrically depends on the random variable $\phi$. In most of the cases of interest, such a solution can not
be determined analytically and, therefore, approximations to the solution are generated \emph{via} \textbf{Numerical integrators}
\footnote{In our examples we have used those already implemented in the
program Octave \cite{Octave}.}.

\subsection{The brute force algorithm}\label{sub-4.1}
The algorithm based on the Monte Carlo generation of instances of the RV $\phi$ will be referred to as the {\em brute-force} algorithm. We use this method for comparison with the FDF results and is described in the following list the steps:
\begin{enumerate}
\item We numerically generate a large number of points
$\phi\in[\alpha,\beta]$ distributed according to the known PDF
$\mu:[\alpha,\beta]\to {\mathbb R}_{\geq 0}$. Typically, computers provide uniform as well as Gaussian random number generators, which could be used at this point to generate any general distribution function $\mu$ for the random number $\phi$, provided that the latter is well behaved.
\item For each $\phi\in[\alpha,\beta]$ and a given time $t$, we calculate
the value $\varphi(\phi,t)$
\item Given the mapping $y = \varphi(\phi,t)$, we numerically estimate the PDF $\psi(y)$ from a given histogram constructed from the randomly generated trajectories.
\item Finally the histogram needs to be properly normalized.
\end{enumerate}

\subsection{The FDF-algorithm}\label{sub-4.2}

Here we will use $x$ rather than $\phi$ as a random variable, to
stress the fact that the theory also works for non-invertible
functions, which are not solutions of any RDE. We will restrict the
use of $\phi$ for the cases where the mapping is obtained from a
RDE. Again, for mappings arising from RDE, the function $g(x) =
\varphi(\phi,t)$, in agreement
with the notation of Section \ref{section-2}.

In the following, we list the steps of the FDF algorithm:
\begin{enumerate}
\item First of all we split the interval $[\alpha,\beta]$ into $N:=N_{\rm div}$
subintervals of equal length and set
$$x_i:= \alpha + i\frac{\beta-\alpha}{N}, \quad i=0,\dots, N,$$
so that $\alpha=x_0$ and $\beta= x_{N}$. Then we compute the image
values $g(x_i)$ either analytically or numerically.
\item Next, we
consider the indexes $i\geq 1$ such that
\begin{equation}\label{eq-signi}
(g(x_i)- g(x_{i-1}))(g(x_{i+1}) - g(x_i))\leq 0.
\end{equation}
Clearly, if $i$ is such that the above relation is satisfied, then
an extremum point of $g$ is contained in the open interval $]x_{i-1},x_{i+1}[$
and it will be approximated by $x_i.$ These, together with $\alpha$ and $\beta$
are the points $\alpha_j$ in Section \ref{sub-1.1}.
\\~\\
The algorithm saves
the $(k+1)$-tuple $(\alpha_j)_{j=0,\dots,k}$, the values $g(\alpha_j),$
together with the set of $j$-indices is the set of $i$'s such that
\eqref{eq-signi} holds. At this point, also the vector $\Lambda=(\lambda_1,\dots,\lambda_k)$
and the $(k+1)$-tuple $(m_j)_{j=0,\dots,k}$ are determined, together with the
range $[g_{\min},g_{\max}]$ of the function $g.$
\item
In order to produce the ordered set $B=\{b_0,\dots,b_{\ell}\}$ with
$g_{\min}=b_0<b_1< \dots < b_{\ell}=g_{\max},$ we sort the values
$g(\alpha_j)$ in increasing order and erase the possible repeated values (if any). At this step,
also the central points $c_i = \frac{b_{i-1} + b_i}{2}$ are determined
and then find the set of indices $I(c_i)=I(y),$ for all $y\in ]b_{i-1},b_i[.$
In fact, we have that $j\in I(c_i)$ if and only if
$$0 < (c_i -g(\alpha_{j-1})){\rm sign}(\lambda_j) < |\lambda_j|,$$
according to \eqref{eq-yglambda}. This procedure allows to save the
set $I(c_i).$ In a similar manner, we determine the sets $I(b_i).$
\item
The function $\eta$ is approximated with a piecewise linear interpolant
passing through the points
$(\hat{g}(x_i),x_i)$ where the values $\hat{g}(x_i)$ are
computed using \eqref{eq-mgj}.
\item
As a last step, we divide each interval $[b_{i-1},b_{i}]$ into a finite number of parts
with a fixed step-size $\Delta$ (in this manner the larger intervals are divided into
larger parts than the smaller ones). Then, knowing the sets
$I(b_{i-1},I(c_i),I(b_i)$ and using \eqref{form-1}, we can determine
the values $u_{\rho_1}(y),\dots, u_{\rho_p}(y)$
(with $\{\rho_1,\dots,\rho_p\}=I(y)$), for each $y$ in the $\Delta$-partition
of $[b_{i-1},b_{i}].$ From these points we find the images
$\eta(u_{\rho_1}(y)), \dots \eta(u_{\rho_p}(y))$ in $[\alpha,\beta]$
and finally we can easily compute
$$\mu_Y(y)= \sum_{i\in I(y)} \mu_X(\eta(u(y)))\frac{d\eta}{du}(u(y)).$$
Observe that, in the algorithm, the derivative of $\eta,$ can be easily obtained
either with the absolute value of $1/g'\eta(u(y))$ (when the analytic
expression of $g$ is known), or by a standard approximation of the incremental ratio
of $g$ computed by a subroutine in the differential equation solver.
\end{enumerate}

\section{Numerical results}\label{section-5}
In this section we provide some numerical experiments that show the feasibility and reliability of our theory.
For the initial random variable $X$, let us choose the non-uniform, non-gaussian density function
\begin{equation}\label{eq-5.1}
\mu_X(x)=\frac{\sin(\omega x) + 2}{\int_{\alpha}^{\beta} (\sin(\omega x) + 2)dx},
\quad \omega=5.
\end{equation}
which we will use the same for all the examples. Clearly, $\mu_X(x)\geq 0$ for all $x\in [\alpha,\beta]$ and $\int_{\alpha}^{\beta} \mu_X(x)dx=1.$

\subsection{Example $(a)$}\label{sub-5.1}
As a first example, we analyze the logistic map $L(x)=rx(1-x)$, described in the case $(a)$
in the Introduction, and study the third iterate $g(x):= L^{(3)}(x)$, which is analogous to fixing a final time in a physical map, although this example is a case in which the mapping is not created from a RDE. We thus brush the interval of initial values $x \in [\alpha, \beta]$ to generate the function $g(x)$. In agreement with
the case presented in Figure \ref{fig-1.1}, we take $r=3.9.$ The natural
domain is then the interval $[\alpha,\beta]=[0,1].$
Moreover, $g_{\min}=0$ and $g_{\max}=L(1/2)=r/4=0.975.$ For the simulation, we have chosen
$N=N_{\rm div}=400.$

Figure \ref{fig-5.1} shows the density $\mu_X$ and the function $g.$
\begin{figure}[htp]
\begin{center}
\includegraphics[scale=0.3]{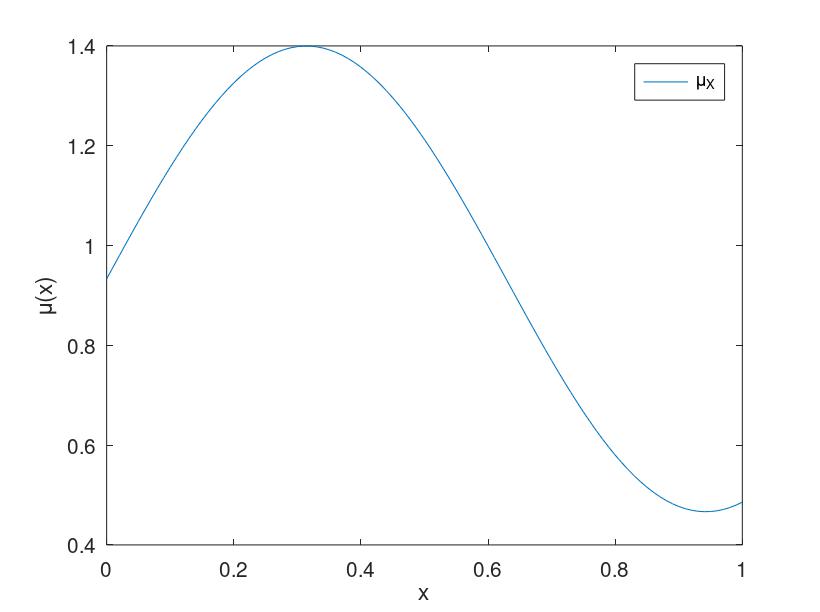}
\quad
\includegraphics[scale=0.3]{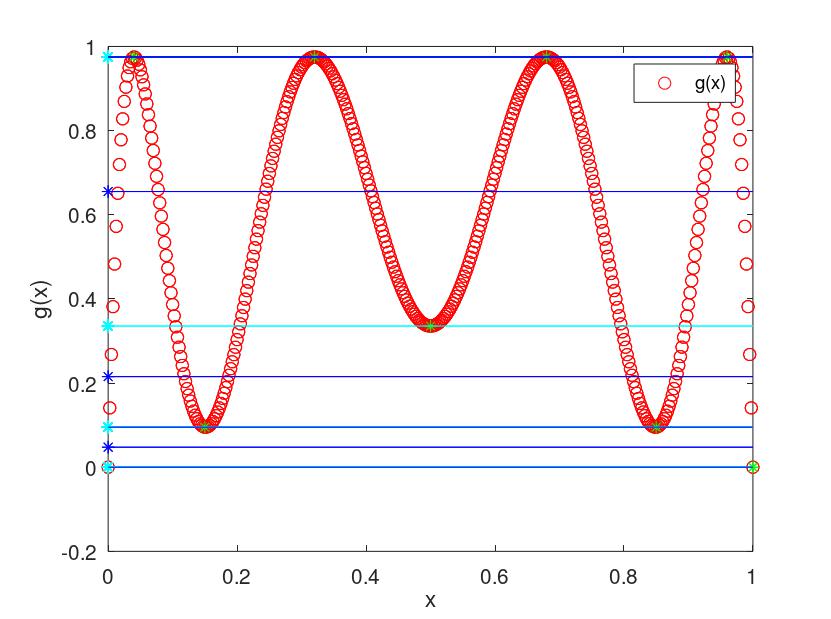}
\end{center}
\caption{\small{For the above set of parameters, we consider the density function $\mu_X$
(upper panel) and the function $g$ (lower panel). In the graph of function $g$
we have marked, on the vertical axis, the points $b_0,\dots,b_{\ell}$,
making the set $B$ (of four points), together
with the three middle points $c_i$. From the figure, the meaning of Lemma \ref{lem-A2}
is evident.}}
\label{fig-5.1}
\end{figure}
Figure \ref{fig-5.2} shows the function $\eta$ (which is the inverse of $\hat{g}$) and the
resulting density function $\mu_Y$, produced numerically via both the brute-force technique and
the FDF-A, for comparison. It is apparent from the figure that
our approach provides a smoother, more precise outcome than the brute-force one. Also very important is that
with the FDF-A the computational cost is remarkably reduced. This is particularly relevant when the number if iterates is very large. In the Monte Carlo approach, the generation of the bundle of trajectories may become a computationally very intensive task.

\begin{figure}[htp]
\begin{center}
\includegraphics[scale=0.3]{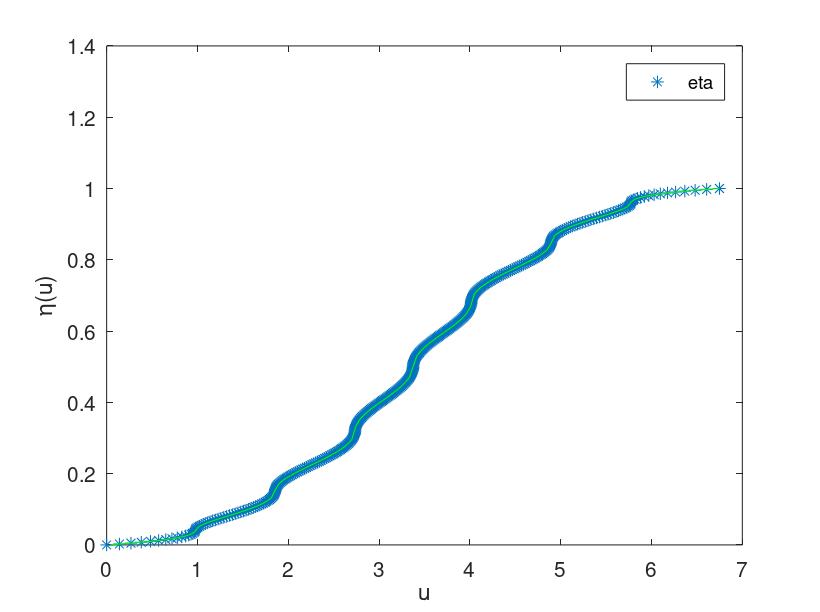}
\quad
\includegraphics[scale=0.3]{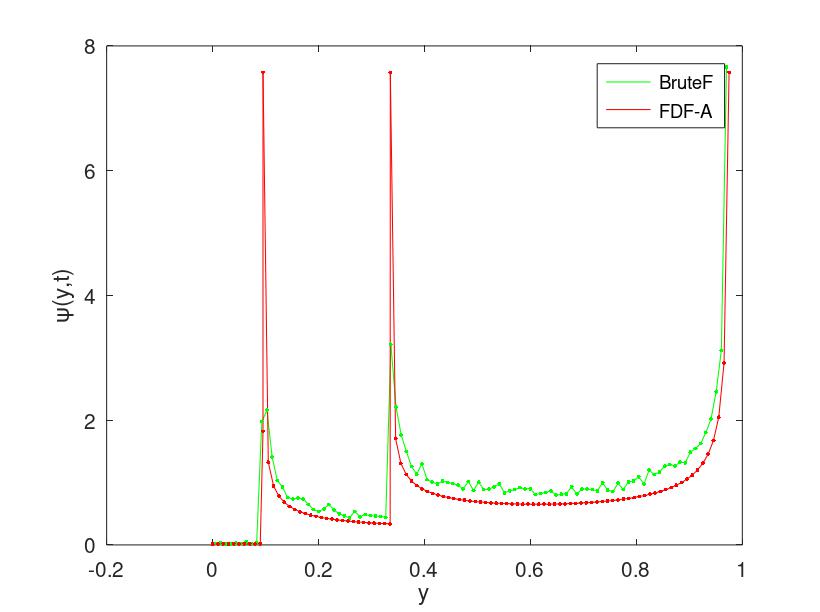}
\end{center}
\caption{\small{The function $\eta$
(upper panel) and the density function $\mu_Y$ (lower panel).
For the density function we compare two different graphs obtained,
respectively, by the brute-force algorithm (in green) and by the FDF-A (in red).}}
\label{fig-5.2}
\end{figure}

\subsection{Example $(b)$}\label{sub-5.2}
As a second example, we consider a projection of the Poincar\'{e} map associated with
the shooting method applied to a second-order scalar equation of Duffing type
$x''+ \rho(x)=0.$ With reference to the two cases introduced in Figure \ref{fig-1.2}
and Figure \ref{fig-1.3}, we study the ``classical'' Duffing equation with $\rho(x)=4x^3$
and the pendulum equation with $\rho(x)=\sin(x).$

For the former case, we take as reference time-interval $[0,T_{\max}]=[0,5]$
and solve the initial value problems
\begin{equation}\label{eq-5.2}
\begin{cases}
y'=v, \; v'= - 4y^3\\
y(0)=0, \; v(0) = \phi
\end{cases}
\end{equation}
on the interval $[0,T_{\max}],$ to obtain the function
$\phi\mapsto y(t,\phi)$ for $t=T_{\max}$ (cf. Figure
\ref{fig-1.2}, lower panel). To solve numerically \eqref{eq-5.2}
we use an already implemented ODE-solver of Octave \cite{Octave},
with a step-size $5/300.$ The chosen density function $\mu_X$ is
the same as in \eqref{eq-5.1} for the interval
$[\alpha,\beta]=[0,5].$ Moreover, $g_{\min}\approx -1.3015$ and
$g_{\max}\approx 1.6717.$ For the simulation, we have chosen
$N=N_{\rm div}=300.$

Figure \ref{fig-5.3} shows the density $\mu_X$ and the function $g(x)=y(t,x).$
\begin{figure}[htp]
\begin{center}
\includegraphics[scale=0.3]{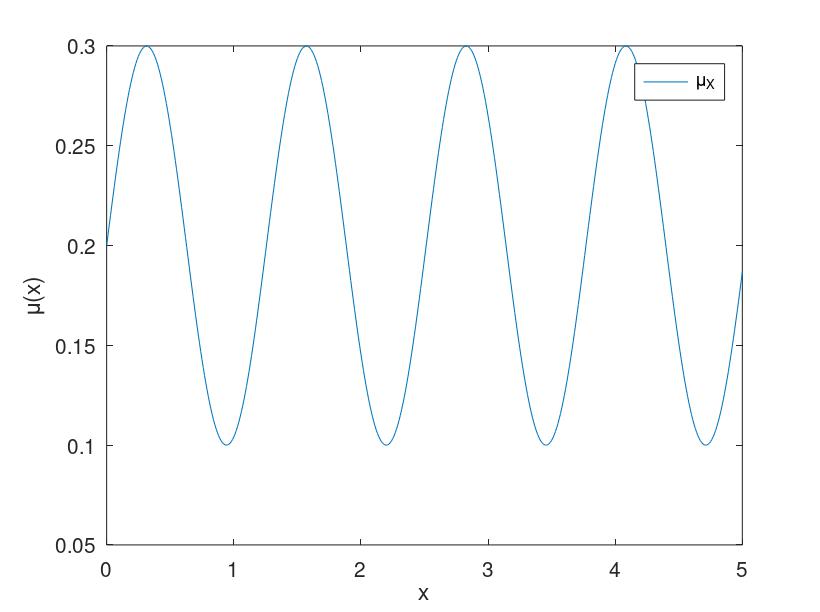}
\quad
\includegraphics[scale=0.3]{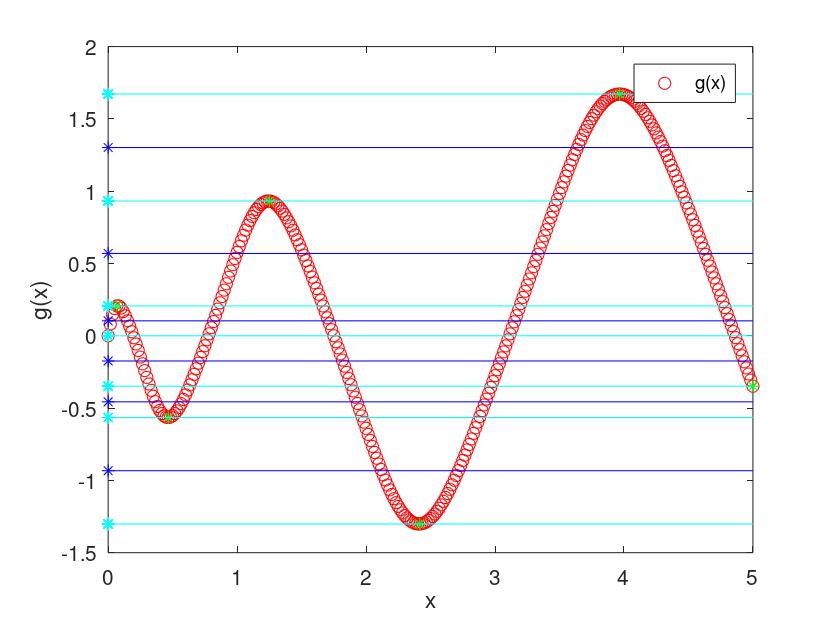}
\end{center}
\caption{\small{For the above set of parameters, we consider the density function $\mu_X$
(upper panel) and the function $g$ (lower panel). In the graph of function $g$
we have marked, on the vertical axis, the points $b_0,\dots,b_{\ell}$,
making the set $B$ (of seven points), together
with the six middle points $c_i$.}}
\label{fig-5.3}
\end{figure}
Figure \ref{fig-5.4} shows the function $\eta$ and the
resulting density function $\mu_Y$, numerically produced via the brute-force technique and also through
the FDF-A.

Again, the results obtained with the FDF algorithm are smoother, more accurate and are produced at lower computational cost than with the brute-force algorithm.

\begin{figure}[htp]
\begin{center}
\includegraphics[scale=0.3]{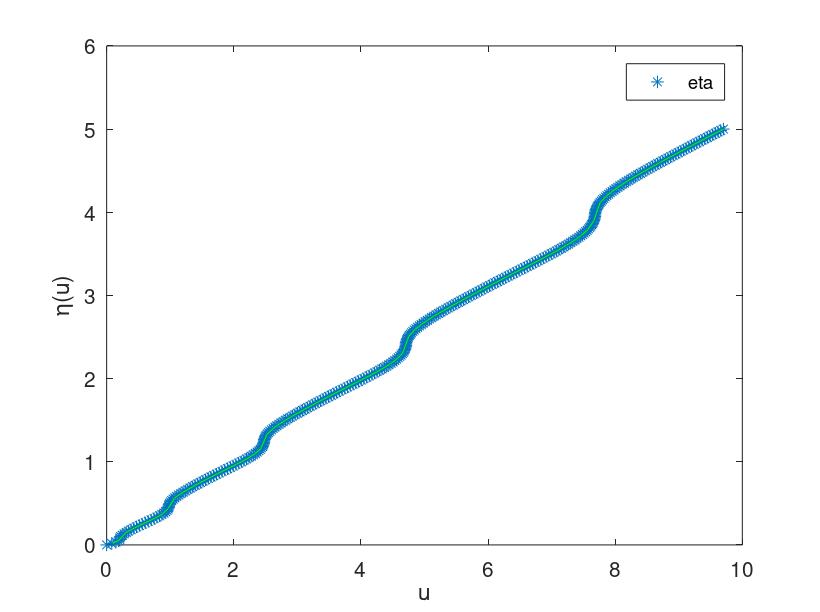}
\quad
\includegraphics[scale=0.3]{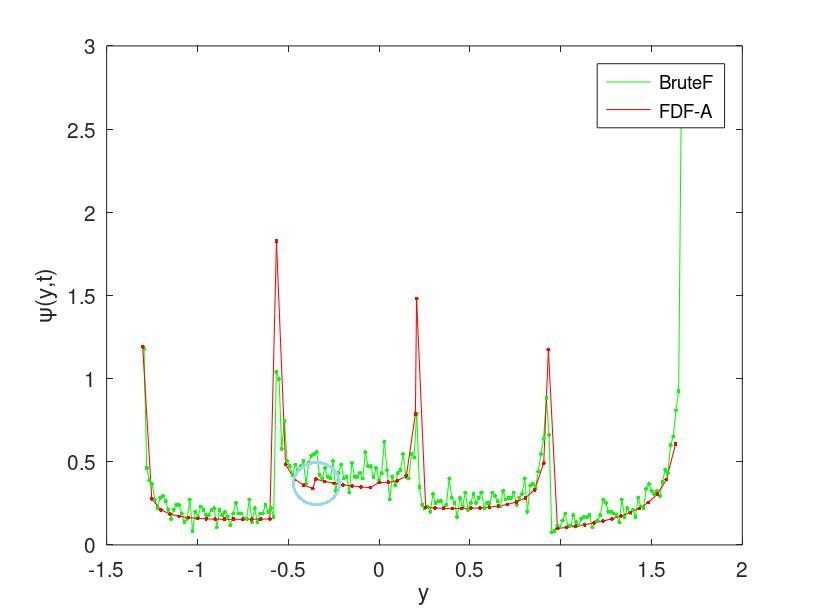}
\end{center}
\caption{\small{The function $\eta$
(upper panel) and the density function $\mu_Y$ (lower panel).
For the density function we compare two different graphs obtained,
respectively, by the brute-force algorithm (in green) and by the FDF-A (in red).}}
\label{fig-5.4}
\end{figure}

Considering now the case of the pendulum equation, we take as
reference time-interval $[0,T_{\max}]=[0,18]$ and solve the
initial value problems
\begin{equation}\label{eq-5.3}
\begin{cases}
y'=v, \; v'= - \sin(y)\\
y(0)=0, \; v(0) = \phi
\end{cases}
\end{equation}
on the interval $[0,T_{\max}],$ obtaining the function
$\phi\mapsto y(t,\phi)$ for $t=T_{\max}$ (cf. Figure
\ref{fig-1.3}, right panel). To solve numerically \eqref{eq-5.3}
we use, as in the above example, an already implemented ODE-solver
of Octave \cite{Octave}, with a step-size $18/200.$ The chosen density function
$\mu_X$ is the same as in \eqref{eq-5.1} for the interval
$[\alpha,\beta]=[0,1.99].$ Moreover, $g_{\min}\approx -2.5019$ and
$g_{\max}\approx 2.9224.$ For the simulation, we have chosen
$N=N_{\rm div}=200.$

Figure \ref{fig-5.5} shows the density $\mu_X$ and the function
$g(x)=y(t,x).$
\begin{figure}[htp]
\begin{center}
\includegraphics[scale=0.3]{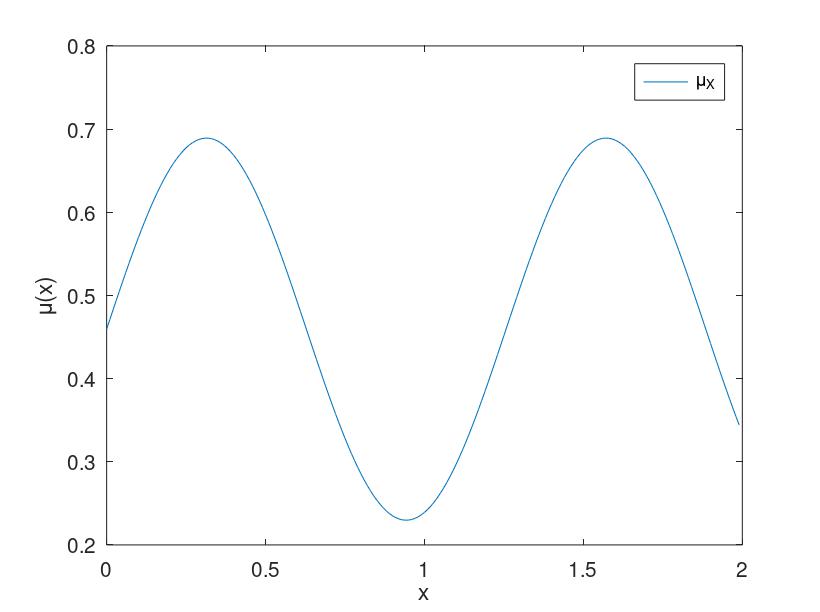}
\quad
\includegraphics[scale=0.3]{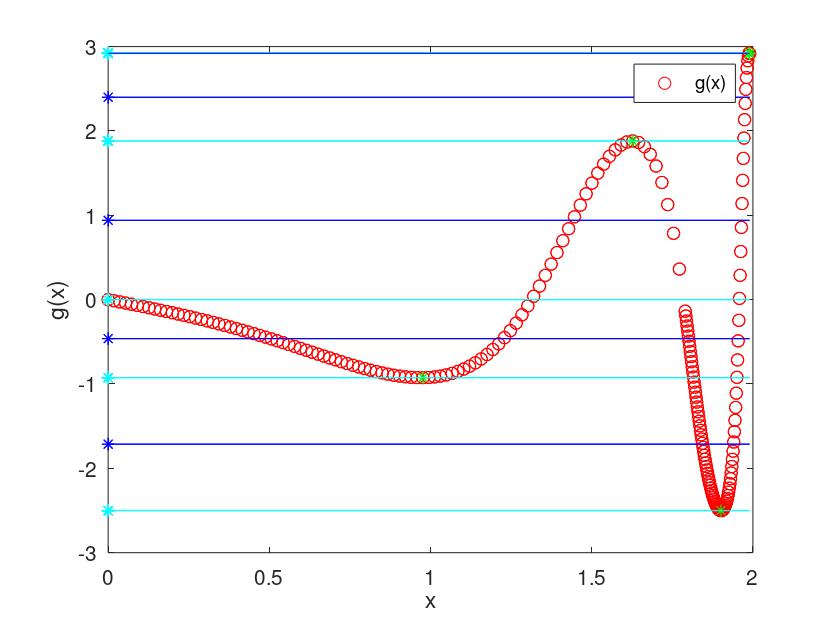}
\end{center}
\caption{\small{For the above set of parameters, we consider the
density function $\mu_X$ (upper panel) and the function $g$ (lower panel). In the graph of function $g$ we have marked, on the
vertical axis, the points $b_0,\dots,b_{\ell}$, making the set $B$
(of five points), together with the four middle points $c_i$.}}
\label{fig-5.5}
\end{figure}

Figure \ref{fig-5.6} shows the function $\eta$ and the resulting
density function $\mu_Y$, produced, numerically, via the
brute-force technique and the FDF-A.
\begin{figure}[htp]
\begin{center}
\includegraphics[scale=0.3]{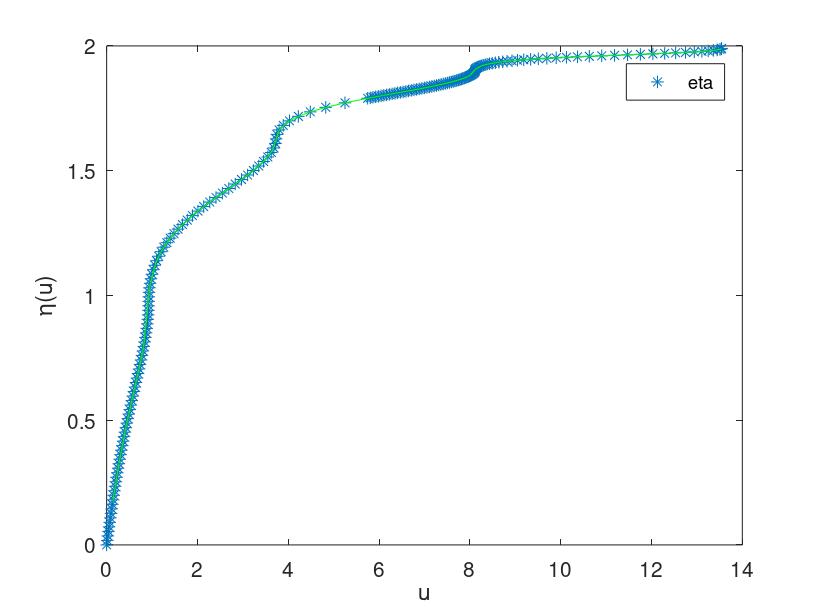}
\quad
\includegraphics[scale=0.3]{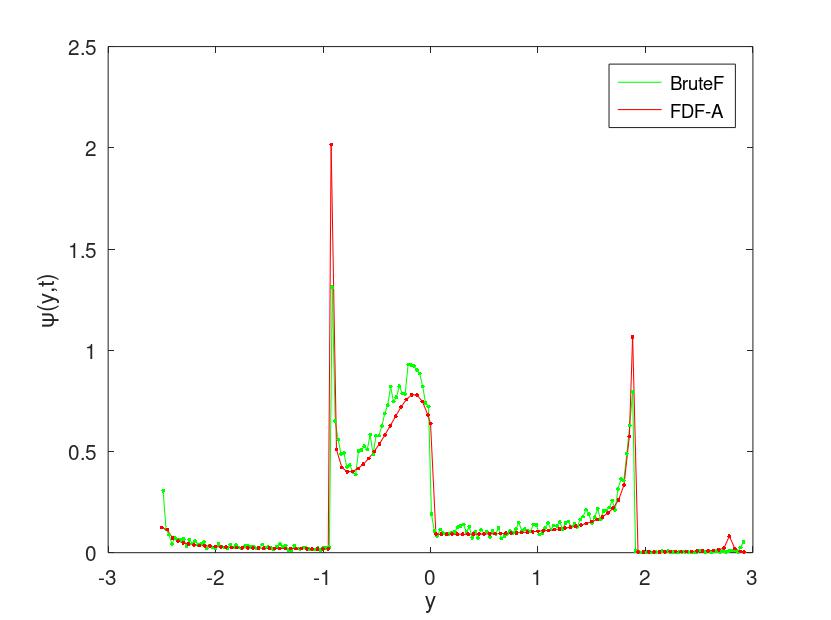}
\end{center}
\caption{\small{The function $\eta$ (upper panel) and the density
function $\mu_Y$ (lower panel). For the density function we
compare two different graphs obtained, respectively, by the
brute-force algorithm (in green) and by the FDF-A (in red).}}
\label{fig-5.6}
\end{figure}

\eject

\subsection{Example $(c)$}\label{sub-5.3}
As a third example, we consider the problem in \eqref{eq-1.4},
leading to an explicit form for $\varphi(\phi,t)$ as in
\eqref{eq-1.5}. In this case, we have an explicit known form for
the function
\begin{equation}\label{eq-5.4}
g(x)=kx+ A\cos(\omega t + x),
\end{equation}
where, $k,A,\omega,t$  are fixed parameters for our
numerical simulation. The chosen density function $\mu_X$ is the
same as in \eqref{eq-5.1} for the interval $[\alpha,\beta]=[2,4].$
For the simulation, we have chosen $N=N_{\rm div}=200.$

Figure \ref{fig-5.7} shows the density $\mu_X$ and the function
$g,$ with $g$ defined for the parameters $k=1,$ $A=2,$ $\omega=6$
and $t=1.$
\begin{figure}[htp]
\begin{center}
\includegraphics[scale=0.3]{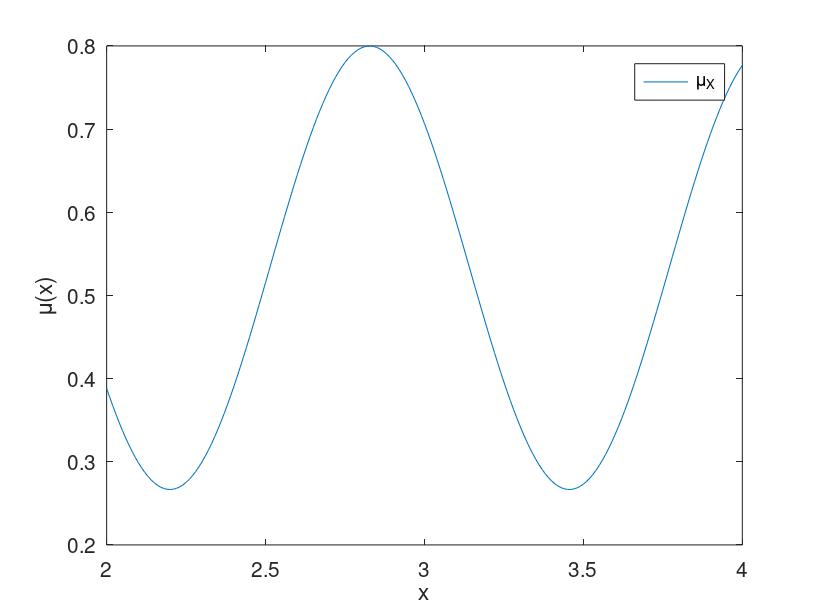}
\quad
\includegraphics[scale=0.3]{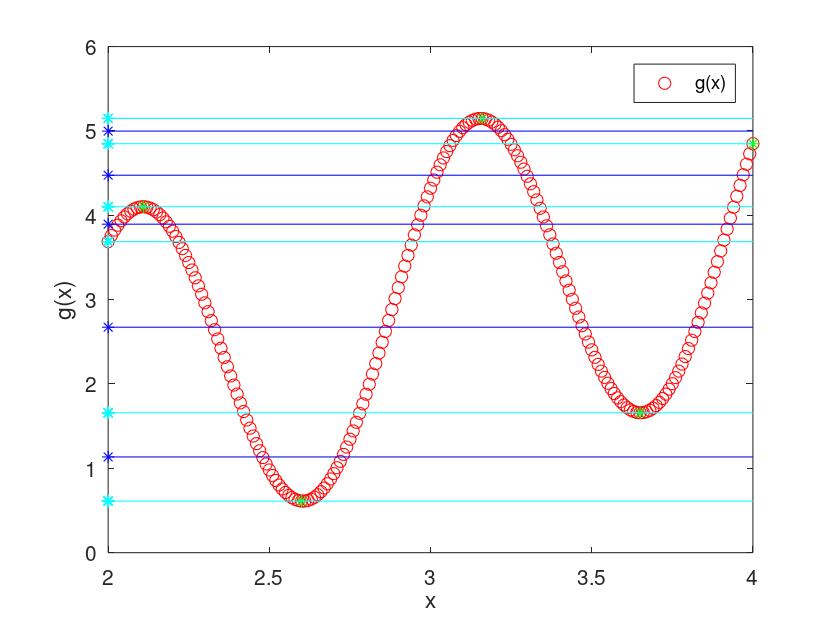}
\end{center}
\caption{\small{We consider the density function $\mu_X$ (upper panel) and the function $g$ (lower panel). In the graph of
function $g$ we have marked, on the vertical axis, the points
$b_0,\dots,b_{\ell}$, making the set $B$ (of six points),
together with the five middle points $c_i$. From the figure, the
meaning of Lemma \ref{lem-A2} is evident.}}
\label{fig-5.7}
\end{figure}

Figure \ref{fig-5.8} shows the function $\eta$ (which is the
inverse of $\hat{g}$) and the resulting density function $\mu_Y$,
via the brute-force procedure as well as the
FDF-A.
\begin{figure}[htp]
\begin{center}
\includegraphics[scale=0.3]{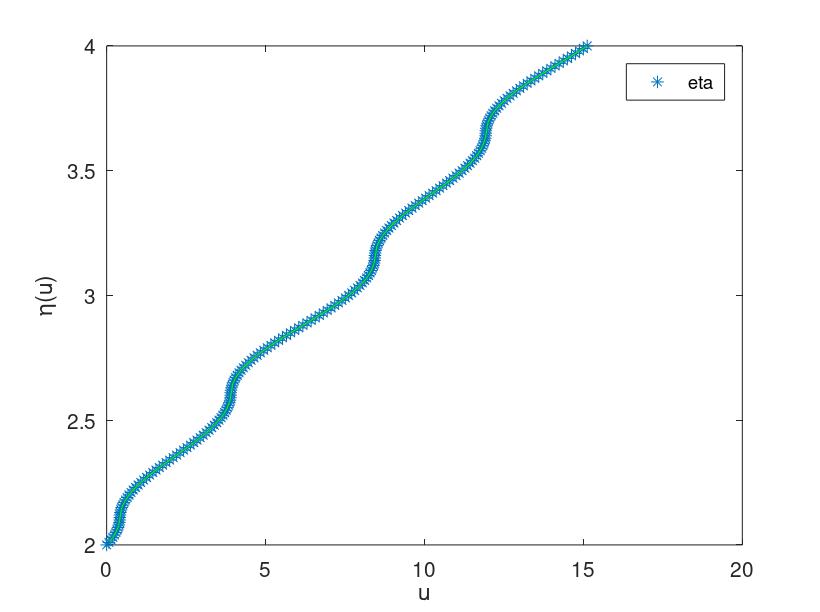}
\quad
\includegraphics[scale=0.3]{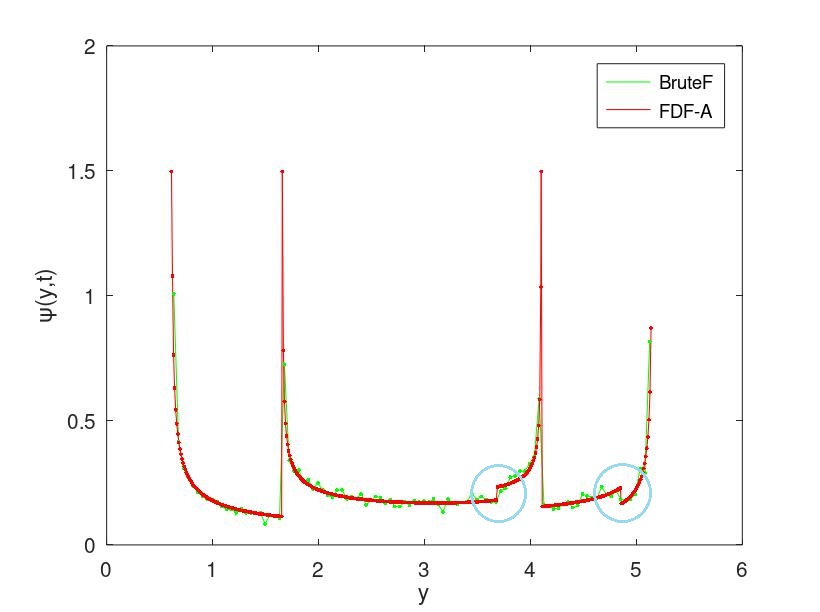}
\end{center}
\caption{\small{The function $\eta$ (upper panel) and the density
function $\mu_Y$ (lower panel). For the density function we
compare two different graphs obtained, respectively, by the
brute-force algorithm (in green) and by the FDF-A (in red). It is
apparent that our approach provides a smoother and more precise
outcome than the other one. Moreover, with the FDF-A the
computational cost is remarkably reduced.}}
\label{fig-5.8}
\end{figure}

Finally, we want to point out the small jumps present in Figure \ref{fig-5.4} and Figure \ref{fig-5.8} which we have highlighted with a small circle. These discontinuities are
due neither to numerical errors nor to any flaw in the FDF algorithm. Instead, they reflect the complexity of the overlapping of different branches in the folded domain, as we have shown in the sketch of Figure \ref{fig4}. In this particular case, they are due to the overlapping of the extremal points
$g(\alpha)$ and/or $g(\beta)$ with other values in the image set
$g(]\alpha,\beta[).$


\section{Conclusion and future work}\label{section-6}
In the present paper we have addressed a classical problem in the study of
random variables, namely, quoting \cite[p.80]{JP-2000}:
\textit{Let $X$ be a RV with density $f$. Suppose $Y=g(X)$.
Can we express the density of $Y$ (if it exists), in terms of $f$ ?}
If $X$ has a continuous density function and $g$ is piecewise strictly monotone and
continuously differentiable, the problem is solved, from a theoretical point of view,
by the formula
\begin{equation}\label{eq-6.1}
\mu_Y(y)=\sum_{i=1}^{n}\mu_X(h_i(y))|h'_i(y)| \mathbbm{1}_{\land}(y)
\end{equation}
(see \cite[Corollary 11.3]{JP-2000}).
In \eqref{eq-6.1},
$\land$ denotes the range of $g$ and it is assumed that there exists intervals
$I_1,I_2,\dots,I_n$ which partition the domain of $g$ and such that
$g$ is strictly monotone and continuously differentiable on the interior of each $I_i.$
Then, $h_i: g(I_i)\to I_i$ denotes the inverse of $g$ restricted to $I_i\,.$
In applying the above formula, one has to decide
\textit{for every $y$ in the range of $g$} the correct functions $h_i$ to be chosen
in the summation. In the simplest case, $g$ is continuously differentiable
with non-vanishing derivative, and the above formula reduces to
\begin{equation}\label{eq-6.2}
\mu_Y(y)=\mu_X(h(y))|h'(y)|, \quad \text{for }\; h=g^{-1}
\end{equation}
(see \cite[Corollary 11.2]{JP-2000}).
This latter result may be easily extended to the vector case; in this situation,
one has to assume that $g$ is a diffeomorphism.

Applications of \eqref{eq-6.2} for one-dimensional and
higher-dimensional models have received a great deal of interest in
the past decades, as witnessed by a growing list of published
research articles in this area in recent years (see, for instance, Refs.
\cite{BCCN-2018,CCDJ-2022,CCRB-2014,CC-2016,CC-2016b,
CC-2016c,CENSS-2020}). Actually, the implementations of
\eqref{eq-6.2} in applied mathematical models is usually referred to
as \textit{Probabilistic Transformation Methods} (according to
Kadry in \cite{Kadry-2007}) or \textit{Random Variable
Transformation (RVT) Technique} (according to Casab\'{a}n et al.
in Ref. \cite{CCRB-2014} and also to the more recent work by Cort\'{e}s et
al. \cite{CLRR-2023}). Further applications of this method have
been developed for various mathematical model equations of great
interest for ecological and biological sciences in subsequent
articles, as those cited above. In some of these articles (as, for
instance, in Ref. \cite{CC-2016}) the domain of the inverse
transformation is split into two disjoint open intervals, so that
\eqref{eq-6.1} simplifies to equation \eqref{eq-6.2} applied
twice. In the very recent work \cite{Almalki-2023}, Almalki,
Elshekhipy and Almutairi deal with a very general case of
non-monotone transformation $g$ and study the probability
distribution of $Y=g(X)$ from a theoretical and numerical point of
view.

Our work, independently follows a similar line of reasoning,
namely, to obtain information of the RV $Y=g(x),$ under the most
general assumptions for $g.$ With this respect, our main
contributions in the present work are the following:
\begin{itemize}
\item{} As already observed, in applying the formula \eqref{eq-6.1}, one has to decide
\textit{for every $y$ in the range of $g$} the correct functions $h_i$ to choose
in the summation. With our approach (and using
Lemma \ref{lem-A1} and Lemma \ref{lem-A2}) rather we can easily determine the index set
$I(y)$ associated with $y;$
\item{} We have proposed a new method to obtain the piecewise-partial inverses
$h_i$ of $g,$ by introducing a unique global function $\eta.$
The function $\eta,$ in some sense, ``unfolds'' the graph of $g$ in a manner such that
the derivatives (in absolute value) $|h'_i(y)|$ in \eqref{eq-6.1}
correspond to the inverse derivative of $\eta$ computed in different intervals.
\item{} Our approach is mathematically rigorous, because we can independently
re-establish \eqref{eq-6.1}, passing through the use of the function $\eta$
(see Section \ref{sub-1.1});
\item{} Our method, when numerically implemented,
requires a minimal
set of information and therefore is extremely simple and fast compared to standard histogram methods based on a Monte Carlo sampling of the initial probability distribution.
\end{itemize}

The very recent articles
\cite{Almalki-2023,CLRR-2023,LLF-2022,NLF-2023} show the persistent interest and relevance
of the problem considered in this work,
both from the theoretical and the applied point of view. We hope that our contribution
can provide a new tool in this area of research.

As a final comment, we outline some possible extensions of our approach
to the higher-dimensional case.
Let $X$ be a $m$-dimensional random variable with domain ${\mathscr D}_X$ and
let $g: {\mathbb R}^m\to {\mathbb R}^m$ be continuously differentiable mapping,
leading to the new Random Variable $Y=g(X),$
with values in ${\mathbb R}^m.$ As in the one-dimensional case,
we are interested in providing an efficient method to estimate the probability
density function $\mu_Y$ in terms of $\mu_X.$
From a theoretical point of view, the problem is solved by
\cite[Corollary 12.1]{JP-2000}; however, implementing the method in
concrete examples may be a formidable task.
This is therefore a problem of great interest, as witnessed by the
recent articles \cite{NLF-2023,LLF-2022}. As Lemma
\ref{lem-A1} and Lemma \ref{lem-A2} can be proven to be also valid in more than one dimension, our algorithm can be extended to maps connecting higher dimensional spaces (see also Remark \ref{rem-A1}).
On the other hand, while in the one-dimensional case, the singular set
reduces to a finite (discrete) set of points, in dimension $m\geq 2,$
we have to face some complex differential geometry problems
concerning the boundaries of the sets where $g$ is partially invertible.
This problem will be addressed next.

\section*{CRediT authorship contribution statement}
\textbf{Fabrizio Masullo:} Conceptualization,Formal analysis, Methodology, Software, Visualization,
Organization of the article.
\textbf{Fabio Zanolin:} Formal analysis, Organization of the article.
\textbf{Josep Bonet Avalos:} Supervision,  Research Project Leader.

\section*{Conflict of interest}
The authors claim that there is no conflict of interest associated with this article
and with the research leading to this article.

\bibliographystyle{plain}
\bibliography{references}

\end{document}